\newtheorem{thm}{Theorem}[section]
\newtheorem{problem}[thm]{Problem}
\newtheorem{lem}[thm]{Lemma}
\newtheorem{prop}[thm]{Proposition}
\newtheorem{defn}[thm]{Definition}
\newtheorem{ex}[thm]{Example}
\newtheorem{cor}[thm]{Corollary}
\newtheorem{rem}[thm]{Remark}
\newtheorem{conj}[thm]{Conjecture}
\newcommand{\Tr}{\operatorname{Tr}}
\newcommand{\N}{\mathbb{N}}
\newcommand{\R}{\mathbb{R}} 
\newcommand{\Z}{\mathbb{Z}}
\newcommand{\F}{\mathbb{F}}
	\def\MR#1{}
\begin{document}

\title{The subspace structure of maximum cliques in pseudo-Paley graphs from unions of cyclotomic classes}
\author{Shamil Asgarli}
\address{Department of Mathematics and Computer Science \\ Santa Clara University \\ 500 El Camino Real \\ USA 95053}
\email{sasgarli@scu.edu}
\author{Chi Hoi Yip}
\address{Department of Mathematics \\ University of British Columbia \\ 1984 Mathematics Road \\ Canada V6T 1Z2}
\email{kyleyip@math.ubc.ca}
\subjclass[2020]{Primary 05C25, 11T22; Secondary 11T24, 11B30, 05E30, 05C60} 
\keywords{Paley graph, Peisert graph, maximum clique, cyclotomy}

\begin{abstract}
Blokhuis showed that all maximum cliques in Paley graphs of square order have a subfield structure. Recently, it has been shown that in Peisert-type graphs, all maximum cliques are affine subspaces, and yet some maximum cliques do not arise from a subfield. In this paper, we investigate the existence of a clique of size $\sqrt{q}$ with a subspace structure in pseudo-Paley graphs of order $q$ from unions of semi-primitive cyclotomic classes. We show that such a clique must have an equal contribution from each cyclotomic class and that most such pseudo-Paley graphs do not admit such cliques, suggesting that the Delsarte bound $\sqrt{q}$ on the clique number can be improved in general. We also prove that generalized Peisert graphs are not isomorphic to Paley graphs or Peisert graphs, confirming a conjecture of Mullin.
\end{abstract}

\maketitle

\section{Introduction}
Throughout the paper, $p$ denotes an odd prime, $q$ denotes a positive power of $p$, and $\F_q$ denotes the finite field with $q$ elements. For a finite field $\F_q$, we write $\F_q^+$ for its additive group and $\F_q^*$ for its multiplicative group. We always assume that $g$ is a fixed primitive root of $\F_q$. 

Let $q \equiv 1 \pmod 4$. The \emph{Paley graph} of order $q$, denoted $P_q$, is the Cayley graph $\operatorname{Cay}(\F_q^+,(\F_q^*)^2)$, where $(\F_q^*)^2$ is the set of squares in $\F_q^*$. Paley graphs are well-studied, connecting many branches of mathematics \cite{J20}, notably combinatorics and number theory.

It is known that Paley graphs are strongly regular, and the set of squares forms a particular cyclotomic class. In fact, Paley graphs belong to a larger family of semi-primitive cyclotomic strongly regular graphs, first constructed by Brouwer, Wilson, and Xiang~\cite{BWX}. In this paper, we study the intermediate family, namely, pseudo-Paley graphs (that is, graphs that share the same spectrum with some Paley graphs) arising from unions of semi-primitive cyclotomic classes. We study their similarity to Paley graphs regarding the structure of maximum cliques. The precise results will be in Section~\ref{subsect:mainresults}, and so we continue with the necessary definitions and further background. 

Let $N \mid (q-1)$. Let $C_0$ be the subgroup of $\F_q^*$ with index $N$, and let $C_1, \ldots, C_{N-1}$ be all the cosets of $C_0$, where $C_j=g^j C_0$. The sets $C_0, C_1, \ldots, C_{N-1}$ are called the $N$-th \emph{cyclotomic classes} of $\F_q$. Furthermore, $N$-th cyclotomic classes are \emph{semi-primitive} if $-1$ is a power of $p$ modulo $N$. The next definition contains the central object of our paper. We will see that Paley graphs can be obtained from the definition below as a special case, namely $P_q=PP(q,2,\{0\})$. 

\begin{defn}\label{def:pp}
Suppose $q$ is a prime power, $d$ a positive integer such that $2d \mid (q-1)$, and $I=\{m_1, \ldots, m_d\} \subset\{0, 1, \ldots, 2d-1\}$ with $|I|=d$. Let $C_0, C_1, \ldots, C_{2d-1}$ be the $2d$-th cyclotomic classes of $\F_q$. The graph $PP(q,2d,I)$ is defined to be the Cayley graph $\operatorname{Cay}(\F_{q}^+, D)$ where 
\begin{equation}\label{D}
D=\bigcup_{j=1}^{d} C_{m_j}.
\end{equation}
Furthermore, we say that $X=PP(q, 2d, I)$ is a \emph{semi-primitive pseudo-Paley graph} if the cyclotomic classes building $X$ are semi-primitive (or equivalently, $-1$ is a power of $p$ modulo $2d$). In this case, we always write $q=p^{2rt}$ where $t$ is the smallest integer satisfying $p^{t}\equiv -1\pmod{2d}$.
\end{defn}

Note that in the above definition, $C_j$ clearly depends on the choice of $g$; however, the isomorphism class of the resulting graph will only depend on $q$, $d$, and the set $I$. Indeed, any two primitive roots, $g$ and $g'$, can be interchanged by a unique field automorphism, which gives rise to a graph isomorphism. For a similar reason, we can assume $0 \in I$ without loss of generality.

The notation \textbf{PP} stands for \textbf{P}seudo-\textbf{P}aley. Indeed, $X=PP(q, 2d, I)$ is a pseudo-Paley graph when $-1$ is a power of $p$ modulo $2d$; this immediately follows from \cite{BWX}*{Theorem 2}. We refer to Example~\ref{ex:pp} for other pseudo-Paley graphs of particular interest, including Peisert graphs and generalized Peisert graphs.

One important open problem in additive combinatorics is obtaining good estimates on the clique number of Paley graphs and similar graphs \cite{CL07}*{Section 2.7}. Recall that in a graph $X$, the clique number of $X$, denoted $\omega(X)$, is the size of a maximum clique. Since $P_q$ is strongly regular, the Delsarte bound \cite{Del73} implies that $\omega(P_q) \leq \sqrt{q}$; in fact, the same upper bound applies to any pseudo-Paley graph of order $q$. This square root upper bound is known as the \emph{trivial upper bound} on the clique number of $P_q$, and it is notoriously difficult to improve this trivial upper bound when $q$ is a non-square. Recently, a minor improvement on the trivial upper bound (from $\sqrt{q}$ to $\sqrt{q}-1$) has been made by Greaves and Soicher \cite{GS} for infinitely many pseudo-Paley graphs with non-square order. More recently, Hanson and Petridis \cite{HP} and Yip \cite{Yip1} improved the $\sqrt{q}$ bound to $\sqrt{q/2}(1+o(1))$ for $\omega(P_q)$ when $q$ is a non-square using the polynomial method. This is still far from the conjectural bound; see the related discussion in \cite{Yip1}*{Section 1} and \cite{YipG}*{Section 1.3}. Nevertheless, it is known that $\omega(P_q)=\sqrt{q}$ when $q$ is a square since the subfield $\F_{\sqrt{q}}$ forms a clique; in fact, Blokhuis \cite{Blo84} showed all maximum cliques have a subfield structure.  

\begin{thm}[\cite{Blo84}] \label{EKRPaley}
Let $q=p^{2r}$, where $p$ is an odd prime. Then the only maximum clique in the Paley graph $P_q$ containing $\{0,1\}$ is the subfield $\F_{\sqrt{q}}$. Consequently, each maximum clique is the image of an affine transformation on the subfield $\F_{\sqrt{q}}$.
\end{thm}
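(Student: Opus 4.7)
The plan is to exploit the constraint $K - K \subseteq \{0\} \cup Q$, where $Q = (\F_q^*)^2$ and $K$ is any maximum clique of $P_q$ containing $\{0,1\}$, via a polynomial method argument. Set $n := \sqrt{q} = p^r$ and introduce the monic degree-$n$ polynomial $f(x) := \prod_{k \in K}(x - k) \in \F_q[x]$. The target is the polynomial identity $f(x) = x^n - x$: once established, the root set of $f$ coincides with the root set of $x^n - x$, which is precisely $\F_{\sqrt{q}}$, so $K = \F_{\sqrt{q}}$. The ``consequently'' clause then follows from arc-transitivity of $P_q$, since the affine maps $x \mapsto ax + b$ with $a \in Q$ and $b \in \F_q$ act transitively on the edges, so every maximum clique is an affine image of one containing $\{0,1\}$.

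To drive $f$ to $x^n - x$, I would translate the combinatorial constraint as follows. For each $a \in K$, the shifted polynomial $f(x + a)/x = \prod_{k \in K \setminus \{a\}}(x - (k - a))$ has degree $n - 1$ and all of its roots in $Q$. Using the factorization $\prod_{s \in Q}(x - s) = x^{(q-1)/2} - 1$, this yields a divisibility
\[
f(x+a)/x \ \Big| \ x^{(q-1)/2} - 1 \quad \text{in } \F_q[x], \quad \text{for every } a \in K.
\]
Thus the family $\{f(x+a)/x : a \in K\}$ consists of $n$ degree-$(n-1)$ factors of a common polynomial of degree $(q-1)/2 = (\sqrt{q}-1)(\sqrt{q}+1)/2$, a rigid configuration.

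From this family I would extract the identity $f(x) = x^n - x$ by a rank/dimension argument inside the quotient ring $\F_q[x]/(x^{(q-1)/2} - 1)$, combined with the Frobenius action $\phi: x \mapsto x^n$. The key is to show that $\phi$ stabilizes $K$ (using that $K - K$ is Frobenius-stable since $Q$ is), so that $K$ is contained in the fixed field of $\phi$, namely $\F_n$; together with $|K| = n = |\F_n|$ this gives $K = \F_n$. The normalization $0, 1 \in K$ provides anchoring: $f(0) = f(1) = 0$, so $x(x-1) \mid f(x)$, compatible with $x^n - x = x(x^{n-1} - 1)$.

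The main obstacle is showing that the Frobenius action $\phi$ preserves $K$, equivalently, that every $k \in K$ satisfies $k^n = k$. The divisibility constraints above do not give this immediately; one must leverage the full family $\{f(x+a)/x\}_{a \in K}$ simultaneously, and this is the delicate counting/dimension step at the heart of Blokhuis' argument. The structural miracle enabling it is the numerical identity $(q-1)/2 = (n-1)(n+1)/2$, which is specific to $q$ being a square; this is ultimately why the analogous rigidity is not known (and is not expected) for Paley graphs of non-square order, and it foreshadows the subspace-structure phenomenon explored in the rest of the paper.
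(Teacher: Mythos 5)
The paper does not prove this theorem; it is quoted from Blokhuis, so your attempt has to stand on its own as a reconstruction of his argument. As it stands it does not: you set up the right objects ($f(x)=\prod_{k\in K}(x-k)$, the divisibilities $f(x+a)/x \mid x^{(q-1)/2}-1$ for all $a\in K$, the target identity $f(x)=x^n-x$), but you explicitly defer the one step that constitutes the actual proof --- ``the delicate counting/dimension step at the heart of Blokhuis' argument.'' Announcing that a rank argument in $\F_q[x]/(x^{(q-1)/2}-1)$ ``should'' force $f(x)=x^n-x$ is a statement of intent, not a proof; nothing in your write-up explains how the $n$ divisibility constraints are combined, and this is precisely where all the difficulty lives (Blokhuis' actual mechanism is a clever identity for sums of the form $\sum_{a\in K}(x-a)^{j}$ together with the relation $d^{(q-1)/2}=d^{(n-1)(n+1)/2}$ for square differences $d$; none of that appears here).

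The fallback route you sketch via Frobenius is also logically broken in two places. First, from $K-K\subseteq\{0\}\cup Q$ and the Frobenius-stability of $Q$ you can only conclude that $\phi(K)$ is \emph{another} maximum clique containing $\{0,1\}$ (since $\phi$ fixes $0$ and $1$); concluding $\phi(K)=K$ from this presupposes the uniqueness of the maximum clique through $\{0,1\}$, which is exactly the statement being proved --- the argument is circular. Second, even if you had $\phi(K)=K$ as sets, $K$ would only be a union of $\phi$-orbits of size $1$ or $2$; set-stability does not place $K$ inside the fixed field $\F_n$ of $\phi$, so the inference ``$\phi$ stabilizes $K$, hence $K\subseteq\F_n$'' is a non sequitur. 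The surrounding scaffolding (the reduction of the ``consequently'' clause to arc-transitivity, the observation $x(x-1)\mid f(x)$) is fine, but the theorem itself remains unproved.
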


Let us consider the same question for a semi-primitive pseudo-Paley graph $X=PP(q,2d,I)$ constructed above. Without loss of generality, we assume that $0 \in I$. As always, we write $q=p^{2rt}$, where $t$ is the smallest integer satisfying $p^{t}\equiv -1\pmod{2d}$. Again, the Delsarte bound implies that the trivial upper bound $\omega(X) \leq \sqrt{q}$ holds. We continue the discussion on $\omega(X)$ according to the parity of $r$. 

We first consider the case when $r$ is odd. Note that in this case, $\F_{\sqrt{q}}^*$ is contained in the cyclotomic class $C_0=\langle g^{2d} \rangle$ because $\sqrt{q}=p^{rt} \equiv -1 \pmod {2d}$; therefore, $\F_{\sqrt{q}}$ forms a clique in $X$ and $\omega(X)=\sqrt{q}$. Indeed, in this case, the connection set $D$ of $X$ can be written as a union of $\F_{\sqrt{q}}^*$-cosets in $\F_q^*$, and thus, by definition $X$ is a \emph{Peisert-type graph}; see \cite{AY}*{Definition 1.1}. The structure of maximum cliques in a Peisert-type graph has been studied recently in \cites{AY, AGLY22}. In particular, each maximum clique in $X$ has a subspace structure \cite{AY}*{Theorem 1.2}. 

\begin{thm}[{\cite{AY}*{Theorem 1.2}}]\label{oddr}
Let $X=PP(q, 2d, I)$ be a semi-primitive pseudo-Paley graph with $q=p^{2rt}$ and odd $r$. Each maximum clique in $X$ containing 0 is an $\F_p$-subspace of $\F_{q}$. 
\end{thm}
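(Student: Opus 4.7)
The plan is to reduce the statement to the known result from \cite{AY}*{Theorem 1.2} on Peisert-type graphs, by verifying that the hypothesis of odd $r$ puts $X$ into the Peisert-type family. The excerpt already hints at this reduction, so my proof proposal is essentially to write out the arithmetic carefully and invoke the cited theorem.

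First I would verify that the subfield $\F_{\sqrt{q}}$ forms a clique in $X$, by showing $\F_{\sqrt{q}}^*\subseteq C_0=\langle g^{2d}\rangle$. Since $\F_{\sqrt{q}}^*$ is the unique subgroup of $\F_q^*$ of order $\sqrt{q}-1$ and $\F_q^*=\langle g\rangle$ has order $(\sqrt{q}-1)(\sqrt{q}+1)$, we have $\F_{\sqrt{q}}^*=\langle g^{\sqrt{q}+1}\rangle$. Therefore $\F_{\sqrt{q}}^*\subseteq\langle g^{2d}\rangle$ if and only if $2d\mid(\sqrt{q}+1)$, i.e.\ $\sqrt{q}\equiv-1\pmod{2d}$. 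This is where the parity of $r$ enters: since $\sqrt{q}=p^{rt}=(p^t)^r$ and $p^t\equiv-1\pmod{2d}$ by the choice of $t$, the oddness of $r$ gives $\sqrt{q}\equiv(-1)^r=-1\pmod{2d}$, as required.

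Next I would use this to rewrite the connection set $D$ as a union of $\F_{\sqrt{q}}^*$-cosets in $\F_q^*$. Since $\F_{\sqrt{q}}^*\subseteq C_0$ and each cyclotomic class $C_{m_j}=g^{m_j}C_0$ is a union of $\F_{\sqrt{q}}^*$-cosets, the set $D=\bigcup_{j=1}^d C_{m_j}$ in \eqref{D} is likewise a union of $\F_{\sqrt{q}}^*$-cosets. By the definition of a Peisert-type graph from \cite{AY}*{Definition 1.1}, this exhibits $X$ as a Peisert-type graph on $\F_q$ with respect to the subfield $\F_{\sqrt{q}}$.

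Finally, I would quote \cite{AY}*{Theorem 1.2}, which asserts that every maximum clique in a Peisert-type graph containing $0$ is an $\F_p$-subspace of $\F_q$. This immediately yields the conclusion of the theorem. There is no real obstacle here — the content is the arithmetic observation $\sqrt{q}\equiv -1\pmod{2d}$ for odd $r$, which unlocks the Peisert-type framework; the structural theorem then does all the heavy lifting.
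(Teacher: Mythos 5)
Your proposal is correct and follows exactly the route the paper itself takes: it observes that for odd $r$ one has $\sqrt{q}=p^{rt}=(p^t)^r\equiv(-1)^r\equiv-1\pmod{2d}$, hence $\F_{\sqrt{q}}^*=\langle g^{\sqrt{q}+1}\rangle\subseteq C_0$, so the connection set $D$ is a union of $\F_{\sqrt{q}}^*$-cosets, $X$ is a Peisert-type graph, and the cited structural theorem of Asgarli and Yip gives the conclusion. No further comment is needed.
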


There are examples for which $X$ admits a maximum clique with no subfield structure; see, for example, \cite{AY}*{Example 2.21} and \cite{AGLY22}*{Section 5}.
However, under extra assumptions, the \emph{subspace} structure of maximum cliques can be upgraded to the \emph{subfield} structure; in this direction, the authors managed to obtain an analog of Theorem~\ref{EKRPaley} in \cite{AY}*{Theorem 1.3}. See also \cite{Y24}.

In this paper, we shall focus on the case when $r$ is even. This change in parity introduces a huge difference: there is no obvious choice of a clique of size $\sqrt{q}$ in general since $\F_{\sqrt{q}}$ fails to be a clique in $X$ in general. Additionally, Lemma~\ref{lem:naive} shows how a naive analog of $\F_{\sqrt{q}}$ fails to be a valid clique. In particular, when $X=PP(p^{2r},4, \{0,1\})$ (a Peisert graph, see Section~\ref{subsect: Paley}) with $p \equiv 3 \pmod 4$ and $r$ even, it is conjectured that $\omega(X)\leq \sqrt{q}-1$ by Yip \cite{YipMaximal}; see the related discussions by Kisielewicz and Peisert \cite{KP}*{Theorem 5.1}, and Mullin \cite{NM}*{Section 3.4}. Note that a simple (yet crucial) fact in all recent improvements of the clique number of Paley graphs and related graphs (of non-square order $q$) is that $\sqrt{q}$ is not an integer when $q$ is a non-square \cites{GS, HP, Yip1, Y23}, so that the small gap between $\sqrt{q}$ and its integer part can be ``blown up" to gain an improvement on the trivial upper bound. In our case, $q$ is a square, which prevents all such techniques from being applied. To our best knowledge, there is no single infinite family of semi-primitive pseudo-Paley graphs (of square order) for which the trivial upper bound has been improved. 

The above discussion, together with Theorem~\ref{oddr},
leads to the following conjecture:
\begin{conj} \label{mainconj} Let $X=PP(q, 2d, I)$ be a semi-primitive pseudo-Paley graph with $q=p^{2rt}$ where $r$ is even. If $\omega(X)=\sqrt{q}$, then every maximum clique in $X$ is an $\F_{p^t}$-affine subspace. 
\end{conj}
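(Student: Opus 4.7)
The plan is to exploit the observation that, although $\F_{\sqrt{q}}$ fails to be a clique when $d\geq 2$ and $r$ is even, a smaller subfield $\F_{p^t}$ does form one. From $p^t\equiv -1\pmod{2d}$ and the identity $(q-1)/(p^t-1)=\sum_{i=0}^{2r-1}p^{it}$, the evenness of $2r$ gives $\sum_{i=0}^{2r-1}(-1)^i\equiv 0\pmod{2d}$, so $\F_{p^t}^*\subset C_0\subset D$ and $D$ is a union of $\F_{p^t}^*$-cosets. In particular $\F_{p^t}$ is a clique of size $p^t$ in $X$, and $X$ behaves like a ``Peisert-type graph over $\F_{p^t}$'' rather than over $\F_{\sqrt{q}}$; this is what makes the $\F_{p^t}$-affine conclusion plausible.

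Now assume $\omega(X)=\sqrt{q}$ and let $K$ be a maximum clique with $0\in K$. Because $D$ is $\F_{p^t}^*$-invariant, $\lambda K$ is also a maximum clique through $0$ for every $\lambda\in\F_{p^t}^*$, and the desired $\F_{p^t}$-linearity of $K$ is equivalent to the rigidity statement $\lambda K=K$ for all such $\lambda$. My first attempt would adapt the Blokhuis--R\'edei polynomial method: set $f(x)=\prod_{a\in K}(x-a)\in\F_q[x]$, and combine the cross-difference condition $(K-K)\setminus\{0\}\subset D$ with the $\F_{p^t}^*$-coset structure of $D$ to force an identity pushing the coefficients of $f$ into $\F_{p^t}$. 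One should then promote this to an $\F_{p^t}$-subspace conclusion by invoking Delsarte tightness, i.e.\ that the indicator $\mathbf{1}_K$ lies in a specific eigenspace of the adjacency operator of $X$ and so must respect the $\F_{p^t}$-module decomposition of $\F_q$.

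The hardest step will be the rigidity. Unlike in Paley graphs, where Blokhuis has the canonical target $\F_{\sqrt{q}}$, here there are many $\F_{p^t}$-subspaces of size $\sqrt{q}$ and no canonical one; the polynomial identity obtained from a union of cosets is correspondingly much weaker than the quadratic identity available for $P_q$. A promising workaround is to fuse the polynomial method with the paper's own structural theorem: if $K$ is a subspace maximum clique, each $C_{m_j}$ meets $K-K$ in exactly $(\sqrt{q}-1)/d$ elements, and under semi-primitivity this equal-contribution condition is extremely restrictive. Combined with the abstract's assertion that most such $X$ fail to admit a clique of size $\sqrt{q}$ at all, the realistic route is a dichotomy: either $\omega(X)<\sqrt{q}$ (and the conclusion is vacuous), or equal-contribution rigidity forces the $\F_{p^t}$-affine structure. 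Closing either half appears to require genuinely new input beyond the Peisert-type machinery of \cite{AY}, which is presumably why the statement is recorded only as a conjecture.
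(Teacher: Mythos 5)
The statement you were asked to prove is Conjecture~\ref{mainconj}; the paper does not prove it, and offers only supporting evidence (Theorem~\ref{theorem:main-result}, the analogy with the Peisert-type case of Theorem~\ref{oddr}, and machine verification for $q\le 7^4$). So there is no proof in the paper to compare against, and the only question is whether your argument closes the conjecture. It does not. The true ingredients you identify are indeed in the paper: $\F_{p^t}^*\subset C_0$ is Lemma~\ref{lem:C_0-contains-Fpt}, $D$ is a union of $\F_{p^t}^*$-cosets, and hence $\lambda K$ is again a maximum clique through $0$ for every $\lambda\in\F_{p^t}^*$. But your reduction is already wrong at the next step: the condition $\lambda K=K$ for all $\lambda\in\F_{p^t}^*$ is \emph{not} equivalent to $K$ being an $\F_{p^t}$-subspace. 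It only says that $K\setminus\{0\}$ is a union of punctured $\F_{p^t}$-lines through the origin (a cone); closure under addition is an entirely separate and much harder requirement, and it is exactly the content of the conjecture. Indeed, the naive candidate $A(q,2d,I)$ of equation~\eqref{naive_construction} is a union of multiplicative cosets containing $\F_{p^t}^*$-lines, yet Lemma~\ref{lem:naive} shows it is not even a clique; multiplicative invariance alone carries no additive information.

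The remaining steps are gestures rather than arguments. The appeal to ``Delsarte tightness'' is accurate as far as it goes --- Proposition~\ref{proposition:main-result} shows $|A|=\sqrt{q}$ forces $S(q,A;c)=0$ for all $c\in D'$ --- but the claim that this forces $\mathbf{1}_K$ to ``respect the $\F_{p^t}$-module decomposition'' does not follow: vanishing of Fourier coefficients on $D'$ is a spectral condition with no a priori linear-algebraic consequence, and even in the easier odd-$r$ case the proof of Theorem~\ref{oddr} in \cite{AY} extracts only an $\F_p$-subspace structure, by a genuinely different eigenvalue-multiplicity argument. The Blokhuis--R\'edei adaptation is not carried out, and you concede it yields an identity ``much weaker'' than in the Paley case. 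Finally, the concluding ``dichotomy'' is circular: its nonvacuous horn asserts that the equal-contribution property of Theorem~\ref{theorem:main-result} forces the affine structure, which is precisely the statement to be proved (and the paper explicitly notes that neither Conjecture~\ref{mainconj} nor Theorem~\ref{theorem:main-result} implies the other). In short, you have correctly assembled the known partial evidence, but the conjecture remains open and your proposal does not prove it.
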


We refer to Section~\ref{subsect:further-motivation} for the further motivation behind this conjecture. Conjecture~\ref{mainconj} is computationally helpful in understanding the clique number: it provides a polynomial-time algorithm to check if the clique number is given by $\sqrt{q}$. According to Conjecture~\ref{mainconj}, the only candidates for maximum cliques of size $\sqrt{q}$ are $\F_{p^t}$-subspaces of a fixed dimension, whose number is bounded by a polynomial function of $q$. We are thus led to investigate the existence of cliques with a subspace structure and with size $\sqrt{q}$ in $X$.

\begin{problem}\label{prob:subspace}
Suppose that $X=PP(q,2d,I)$ is a semi-primitive pseudo-Paley graph with $q=p^{2rt}$ where $r$ is even. Determine if $X$ has a clique of size $\sqrt{q}$ with a subspace structure. Does $X$ have a clique of size $\sqrt{q}$ with a subspace structure?
\end{problem}

We will show that the answer to the question in Problem~\ref{prob:subspace} is negative in a probabilistic sense (see Proposition~\ref{prop:density-zero}), thus lending some evidence to the inequality $\omega(X)\leq \sqrt{q}-1$ in general. However, numerical experiments suggest that there is an infinite family of semi-primitive pseudo-Paley graphs (other than Paley graphs) where the trivial upper bound is tight; see Conjecture~\ref{conj:josh}.

\subsection{Main results}\label{subsect:mainresults}

Our first main result guarantees that a maximum clique of size $\sqrt{q}$ has an equal contribution from each cyclotomic class building the graph. 

\begin{thm}\label{theorem:main-result}
Let $X=PP(q, 2d, I)$ be a semi-primitive pseudo-Paley graph with $q=p^{2rt}$ where $r$ is even. Then $\omega(X)\leq \sqrt{q}$. Moreover, if $\omega(X)=\sqrt{q}$, and $A$ is a maximum clique in $X$ such that $0 \in A$, then 
\begin{equation}\label{main}
|A \cap C_{m_1}|=|A \cap C_{m_2}|=\cdots=|A \cap C_{m_d}|=\frac{\sqrt{q}-1}{d}.    
\end{equation}
\end{thm}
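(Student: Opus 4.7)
The bound $\omega(X) \leq \sqrt{q}$ is simply the Delsarte/Hoffman bound for the strongly regular pseudo-Paley graph $X$, which was already mentioned in the introduction, so the real content of the theorem is the equi-distribution \eqref{main}. My plan is to extract this from the tightness of the Hoffman bound combined with the explicit Gauss-period formulas available in the semi-primitive case.

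The first step is to diagonalize $X$ by Fourier analysis on $\F_q^+$. Fix a non-trivial additive character $\psi_1$ of $\F_q$ and set $\eta_i := \sum_{x \in C_i} \psi_1(x)$. A classical Gauss-period calculation, valid under the semi-primitive hypothesis $p^t \equiv -1 \pmod{2d}$ with $r$ even, shows that the $\eta_i$'s take exactly two values: one distinguished period $\eta_{i_0}$ equals $\alpha := -\bigl((2d-1)\sqrt{q}+1\bigr)/(2d)$, and the remaining $2d-1$ periods all equal $\beta := (\sqrt{q}-1)/(2d)$, so that $\alpha - \beta = -\sqrt{q}$. Since the eigenvalue of $X$ at $\psi_a(x) := \psi_1(ax)$ is $\sum_{j=1}^{d} \eta_{k+m_j}$ whenever $a \in C_k$, this eigenvalue equals $\theta_2 = -(\sqrt{q}+1)/2$ when $i_0 - k \in I$ and $\theta_1 = (\sqrt{q}-1)/2$ otherwise. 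Let $K_{-} := \{k : i_0 - k \in I\}$, a set of size $d$.

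A standard quadratic-form calculation then shows that the tightness $|A| = \sqrt{q}$ forces $\mathbf{1}_A - (|A|/q)\mathbf{1}$ to lie in the $\theta_1$-eigenspace; equivalently, $\sum_{x \in A} \psi_a(x) = 0$ for every $a \in C_k$ with $k \in K_{-}$. Writing $a_j := |A \cap C_{m_j}|$, I would sum this vanishing identity over all $a \in C_k$ and swap summations, using that the contribution from $0 \in A$ is $|C_k| = (q-1)/(2d)$ and that $\sum_{a \in C_k} \psi_1(ax) = \eta_{k+m_j}$ for every $x \in C_{m_j}$. This yields
\[
0 \;=\; \frac{q-1}{2d} + \sum_{j=1}^{d} a_j\, \eta_{k+m_j}.
\]
For each $k \in K_{-}$ there is a unique index $j_0 = j_0(k) \in \{1, \dots, d\}$ with $m_{j_0} \equiv i_0 - k \pmod{2d}$, so the right-hand side reduces to $(q-1)/(2d) + a_{j_0}(\alpha - \beta) + (\sqrt{q}-1)\beta$; substituting $\alpha - \beta = -\sqrt{q}$ and $\beta = (\sqrt{q}-1)/(2d)$ and solving gives $a_{j_0} = (\sqrt{q}-1)/d$. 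Since $k \mapsto j_0(k)$ is a bijection from $K_{-}$ onto $\{1, \dots, d\}$, every $a_j$ equals $(\sqrt{q}-1)/d$, which is \eqref{main}.

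The key technical input is the two-valued structure of the semi-primitive Gauss periods, which collapses the character-sum identity above into a linear equation in the single unknown $a_{j_0}$. The one subtle point to verify is the sign convention in the quadratic-form step: the Hoffman bound for cliques uses the largest non-trivial eigenvalue $\theta_1$, so $\mathbf{1}_A - (|A|/q)\mathbf{1}$ lies in the $\theta_1$-eigenspace, forcing the vanishing identity to be summed over $K_{-}$ (rather than its complement) in order to obtain a non-trivial constraint on the $a_j$'s.
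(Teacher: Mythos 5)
Your proof is correct. The second half — summing the vanishing identity over each class $C_k$ with $k\in K_-$, swapping the order of summation, and using the two-valued Gauss periods to reduce to a single unknown — is essentially identical to the paper's own argument in Section~3 (the paper solves the resulting system by inverting the matrix $(\lambda-\mu)I_d+\mu J_d$, whereas you decouple the equations using $\sum_j a_j=\sqrt{q}-1$; both work, and yours is marginally slicker). Where you genuinely diverge is in how you obtain the key vanishing criterion, namely that $|A|=\sqrt{q}$ forces $S(q,A;c)=0$ for all $c$ in the classes $C_{-m_\ell}$. The paper proves this as a standalone proposition in an appendix by a direct computation: it builds the interpolation function $f=\sum_j c_j\overline{\chi^j}$ for the indicator of the connection set, evaluates $\sum_{a,b\in A}f(a-b)$ two ways via Gauss sums and Plancherel, and reads off the equality condition. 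You instead invoke the ratio (Hoffman) bound for cliques: the semi-primitive Gauss periods are two-valued, so the graph is strongly regular with $\theta_1=(\sqrt q-1)/2$ and $\theta_2=-(\sqrt q+1)/2$, tightness places $\mathbf{1}_A-(|A|/q)\mathbf{1}$ in the $\theta_1$-eigenspace, and the $\theta_2$-eigenspace is exactly the span of the characters $\psi_a$ with $a\in\bigcup_{k\in K_-}C_k$. You also correctly resolve the one genuinely delicate point — that the clique equality condition lives in the \emph{second-largest} eigenvalue's eigenspace, so the Fourier coefficients that vanish are those indexed by $K_-$ and not its complement; with the canonical additive character one has $i_0=0$, so $K_-=\{-m_\ell\}$ matches the paper's $D'$. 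The two derivations rest on the same arithmetic input (the semi-primitive evaluation $G(\chi^j)=-\sqrt q$, which requires $r$ even), but your route outsources the quadratic-form work to a standard spectral lemma, which is shorter, while the paper's self-contained computation has the advantage of simultaneously proving the upper bound $|A|\le\sqrt q$ without appealing to the general Delsarte/Hoffman machinery.
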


When $X$ is a Paley graph, Theorem~\ref{theorem:main-result} follows from Theorem~\ref{EKRPaley}; see Lemma~\ref{lem:Paley-equal-contribution}. Furthermore, Theorem~\ref{theorem:main-result} is a genuine extension: most graphs $PP(q, 2d, I)$ are not isomorphic to the Paley graph $P_q$ by Proposition~\ref{prop:isomorphism}. This latter fact also helps us to prove a conjecture by Mullin on generalized Peisert graphs (see Corollary~\ref{cor:mullin-conjecture}).

It is crucial to assume that $r$ is even in the hypothesis of Theorem~\ref{theorem:main-result}. Indeed, when $r$ is odd, we have mentioned that $\F_{\sqrt{q}} \subset C_0 \cup \{0\}$ forms a clique in $X$, and yet the contribution solely comes from a single cyclotomic class. On the other hand, when $r$ is even, any clique of size $\sqrt{q}$ has an equal contribution from each cyclotomic class according to our main theorem. Again, we see that cliques behave very differently when $r$ is odd and when $r$ is even.

It is worth pointing out that neither Conjecture~\ref{mainconj} nor Theorem~\ref{theorem:main-result} implies each other. However, let us discuss how Theorem~\ref{theorem:main-result} shows that a maximum clique containing $0$ behaves like a subspace, thus lending some evidence towards Conjecture~\ref{mainconj}. Indeed, if $V \subset \F_q$ is a vector space over $\F_{p^t}$, we expect that the contribution to $V$ from each cyclotomic class forming the connection set $D$ would be \emph{approximately} equal. This is due to the general principle that the intersection of a subspace and a multiplicative subgroup can be modeled as the intersection of two random sets \cite{Kop19}*{Lemma 11}. On the other hand, Theorem~\ref{theorem:main-result} asserts that any maximum clique $A$ in $X$ has \emph{exactly} equal contribution from each cyclotomic class, so Conjecture~\ref{mainconj} alone does not imply Theorem~\ref{theorem:main-result}.

Assuming Conjecture~\ref{mainconj}, we apply Theorem~\ref{theorem:main-result} to improve the upper bound on the clique number of most semi-primitive pseudo-Paley graphs (unconditionally, we prove that there is no clique of size $\sqrt{q}$ with a subspace structure for most graphs). First, we show that the density of such graphs with clique number $\sqrt{q}$ is zero in Proposition~\ref{prop:density-zero}. Next, we prove an effective version of this statement. The condition that $p$ is sufficiently large in the result below is necessary; see Table~\ref{tab:evidence-main-conj} for counterexamples.

\begin{thm} \label{thm:clique_number}
Let $X=PP(q, 2d, I)$ be a semi-primitive pseudo-Paley graph with $q=p^{2rt}$ where $r$ is even, and $I\neq \{0, 2, \ldots, 2d-2\}$ and $I\neq \{1, 3, \ldots, 2d-1\}$. Assuming Conjecture~\ref{mainconj}, $\omega(X)\leq \sqrt{q}-1$ for $p^t>10.2r^2d$.
\end{thm}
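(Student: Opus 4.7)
The plan is to argue by contradiction. Assume $\omega(X) = \sqrt{q}$; by Conjecture~\ref{mainconj} every maximum clique is an $\F_{p^t}$-affine subspace, and after translating we obtain a maximum clique $A$ that is an $r$-dimensional $\F_{p^t}$-linear subspace of $\F_q$ containing $0$. Theorem~\ref{theorem:main-result} then gives the equal-contribution identity $|A \cap C_m| = (\sqrt{q}-1)/d$ for every $m \in I$.

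The next step is to extract two structural consequences. Since $p^t \equiv -1 \pmod{2d}$ and $r$ is even, the identity $\tfrac{q-1}{p^t-1} = \sum_{i=0}^{2r-1} p^{it} \equiv \sum_{i=0}^{2r-1}(-1)^i = 0 \pmod{2d}$ forces $\F_{p^t}^* \subseteq C_0$, so each cyclotomic class is a disjoint union of $\F_{p^t}^*$-cosets and $A \setminus \{0\}$ is a union of such cosets. Similarly, $\sqrt{q}+1 \equiv 2 \pmod{2d}$ gives $\F_{\sqrt{q}}^* \subseteq \bigcup_{m \text{ even}} C_m$; so if $A = \alpha \F_{\sqrt{q}}$ for some $\alpha \in \F_q^*$, then equal contribution would force $I$ to be either $\{0,2,\ldots,2d-2\}$ or $\{1,3,\ldots,2d-1\}$, contradicting our hypothesis on $I$. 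A short dimension argument then shows $\F_{p^t}[A] = \F_q$: writing $\F_{p^t}[A] = \F_{p^{ts}}$ with $s \mid 2r$ and $r \leq s \leq 2r$, for even $r \geq 2$ the only such divisors are $s = r$ (forcing $A = \F_{\sqrt{q}}$, already excluded) and $s = 2r$.

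The core of the argument would be a character-sum estimate. For any nontrivial multiplicative character $\chi$ of $\F_q^*$ of order dividing $2d$, equal contribution gives
\[
S_\chi \;\colonequals\; \sum_{a \in A \setminus \{0\}} \chi(a) \;=\; \frac{\sqrt{q}-1}{d}\sum_{m \in I}\chi(g^m).
\]
Since the hypothesis on $I$ excludes both Paley-type sets, character orthogonality on $\Z/2d\Z$ yields $\sum_{\chi \neq 1}\bigl|\sum_{m \in I}\chi(g^m)\bigr|^2 = d^2$, and by pigeonhole there is a nontrivial $\chi_0$ with $\bigl|\sum_{m \in I}\chi_0(g^m)\bigr| \geq \sqrt{d/2}$, producing the lower bound $|S_{\chi_0}| \geq \tfrac{(\sqrt{q}-1)\sqrt{d/2}}{d}$. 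I would bound $|S_{\chi_0}|$ from above by decomposing $\mathbbm{1}_A$ via the additive-character annihilator $A^\perp$ and applying the Weil estimate $|G(\chi_0,\psi)| = \sqrt{q}$ on Gauss sums, exploiting the generation property $\F_{p^t}[A] = \F_q$ to extract cancellation beyond the trivial Fourier bound. An estimate of the form $|S_{\chi_0}| \leq c\, r \sqrt{q/p^t}$ for an explicit constant $c$ would then combine with the lower bound to yield $p^t \leq 10.2\, r^2 d$, contradicting the hypothesis.

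The hard part will be the character-sum upper bound: the trivial Fourier estimate gives only $|S_\chi| \leq \sqrt{q}-1$, which matches the lower bound from equal contribution in the worst case (when the summands $\chi(g^m)$ for $m \in I$ are nearly aligned) and yields no contradiction. A genuine improvement requires exploiting both the $\F_{p^t}$-subspace structure of $A$ and the generation property $\F_{p^t}[A] = \F_q$ to produce cancellation between the Gauss sums indexed by $A^\perp$; extracting the explicit constant $10.2$ then requires careful tracking of the Cauchy--Schwarz losses against the trivial diagonal contribution in that decomposition.
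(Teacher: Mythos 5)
Your overall architecture is exactly the paper's: assume $\omega(X)=\sqrt{q}$, invoke Conjecture~\ref{mainconj} to make a maximum clique $A$ an $\F_{p^t}$-subspace, use Theorem~\ref{theorem:main-result} to write $\sum_{x\in A\setminus\{0\}}\chi(x)=\frac{\sqrt{q}-1}{d}\sum_{m\in I}\chi(g^m)$ for characters of order dividing $2d$, get a lower bound $>\sqrt{d/2}$ on $\bigl|\sum_{m\in I}\chi(g^m)\bigr|$ for some nontrivial $\chi$ by the $L^2$/pigeonhole computation (this is precisely Lemma~\ref{lem:theta-sum-lb}), rule out $A=\F_{\sqrt{q}}$ via equal contribution (Lemma~\ref{lem:subfield-clique}), and then beat the lower bound with an upper bound of the shape $c\,r\sqrt{q/p^t}$ on the character sum over the subspace $A$. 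You even guessed the correct form of that upper bound; the paper uses $\frac{2r}{\sqrt{p^t}}\sqrt{q}$, i.e.\ $c=2$, which is where $10.2\approx 2\cdot(2)^2\cdot\frac{81}{64}\cdot\frac{\pi^2}{\pi^2}\dots$ — more precisely where $8\cdot(81/64)\approx 10.1$ comes from after accounting for $\sqrt{q}/(\sqrt{q}-1)$.

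The genuine gap is the character-sum upper bound itself, which you correctly flag as "the hard part" but do not prove, and your proposed route would not prove it. Expanding $\mathbbm{1}_A$ over the annihilator $A^\perp$ gives
\begin{equation*}
\sum_{x\in A}\chi(x)\;=\;\frac{|A|}{q}\sum_{\psi\in A^\perp\setminus\{1\}}G(\chi,\psi)\;=\;\frac{|A|}{q}\,G(\chi)\sum_{c\in A^\perp\setminus\{0\}}\overline{\chi}(c),
\end{equation*}
since $G(\chi,\psi_c)=\overline{\chi}(c)G(\chi)$. Because $|G(\chi)|=\sqrt{q}$ and $A^\perp$ is again an $r$-dimensional $\F_{p^t}$-subspace, this identity merely transfers the problem to the character sum over $A^\perp$ (a self-dual reformulation); applying $|G(\chi,\psi)|=\sqrt{q}$ termwise returns the trivial bound $\approx|A|=\sqrt{q}$, which exactly cancels your lower bound and yields no contradiction. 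The estimate the paper actually uses is Theorem~\ref{charsumcor} (\cite{AY}*{Corollary 3.6}, a consequence of a recent theorem of Reis): for an $\F_{p^t}$-subspace $V\subseteq\F_{p^{2rt}}$ of dimension $r$ with $1\in V$ and $V\neq\F_{\sqrt{q}}$, one has $\bigl|\sum_{x\in V}\chi(x)\bigr|<\frac{2r}{\sqrt{p^t}}|V|$. This is a nontrivial external input whose proof does not follow from the $A^\perp$/Gauss-sum decomposition plus Cauchy--Schwarz; without it (or an equivalent), the argument does not close. Two smaller points: you should normalize $A$ so that $1\in A$ (the paper does this by multiplying by $y^{-1}$ for some $y\in C_0\cap A$, which exists by equal contribution) since Reis's bound requires $1\in V$; and the orthogonality identity $\sum_{\chi\neq 1}\bigl|\sum_{m\in I}\chi(g^m)\bigr|^2=d^2$ holds for any $I$ with $|I|=d$ — the hypothesis on $I$ is needed only to exclude $A=\F_{\sqrt{q}}$, not for the pigeonhole step.
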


As a concrete application, we can conditionally improve the Delsarte bound for the clique number of certain generalized Peisert graphs $GP^*(q,2d) \colonequals PP(q,2d,\{0,1, \ldots, d-1\})$. We remark that an unconditional improvement (even by $1$) on the Delsarte bound for the clique number of these graphs is regarded by the experts to be notoriously hard; see, for example, the discussion in the recent paper \cite{BGLR23}.

\begin{thm} \label{thm: Peisert_clique_number}
Suppose that Conjecture \ref{mainconj} is true. If $d\geq 2$, and $t$ is the smallest integer such that $p^t \equiv -1 \pmod {2d}$ and $p^t>3$, then $\omega(GP^*(p^{4t},2d))\leq p^{2t}-1$. 
\end{thm}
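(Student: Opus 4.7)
The plan is to argue by contradiction: suppose that $\omega(X) = \sqrt{q} = p^{2t}$ where $X = GP^*(p^{4t}, 2d) = PP(p^{4t}, 2d, \{0, 1, \ldots, d-1\})$. By Conjecture~\ref{mainconj}, every maximum clique $A$ containing $0$ is an $\F_{p^t}$-affine subspace; since $|A| = (p^t)^2$, it is in fact a $2$-dimensional $\F_{p^t}$-subspace. Theorem~\ref{theorem:main-result} then gives $|A \cap C_j| = (p^{2t}-1)/d$ for each $j \in I = \{0, 1, \ldots, d-1\}$. After applying an $\F_{p^t}^*$-scaling (which is a graph automorphism), I assume $1 \in A$ and write $A = \F_{p^t} \oplus \F_{p^t}\beta$ for some $\beta \in \F_{p^{4t}} \setminus \F_{p^t}$.

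First I would rule out $A = \F_{p^{2t}}$: since $\F_{p^{2t}}^* = \langle g^{p^{2t}+1}\rangle$ and $p^{2t}+1 \equiv 2 \pmod{2d}$, the cyclotomic indices of the elements of $\F_{p^{2t}}^*$ form the set $\{0, 2, 4, \ldots, 2d-2\}$, which is not contained in $I$ once $d \geq 2$. Hence $\beta \notin \F_{p^{2t}}$, so $\beta$ has degree $4$ over $\F_{p^t}$.

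Next I would invoke the Frobenius automorphism $\sigma = \operatorname{Frob}_{p^{2t}}$: because $p^{2t} \equiv 1 \pmod{2d}$, $\sigma$ is a graph automorphism of $X$ fixing $0$ and $1$, so $\sigma(A)$ is also a maximum clique. If $\sigma(A) = A$, working out $\sigma^2 = \mathrm{id}$ forces $\beta + \sigma(\beta) \in \F_{p^t}$; after translating $\beta$ by an element of $\F_{p^t}$ (which preserves $A$) we may assume $\sigma(\beta) = -\beta$, so $\gamma := \beta^2 \in \F_{p^{2t}}^*$ is a non-square. Letting $\chi$ be the character of $\F_{p^{4t}}^*$ of order $2d$ with $\chi(C_j) = \zeta^j$ where $\zeta = e^{i\pi/d}$, the identity $\chi \circ \sigma = \chi$ together with $\sigma(c+\beta) = c - \beta$ and $\chi(-1) = 1$ yields the crucial symmetry $\chi(-c + \beta) = \chi(c + \beta)$ for all $c \in \F_{p^t}$. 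This pairs $c$ with $-c$ and forces $N_k \equiv \mathbf{1}_{\chi(\beta) = \zeta^k} \pmod 2$, where $N_k = |\{c \in \F_{p^t} : \chi(c + \beta) = \zeta^k\}|$. The balanced multiplicities from Theorem~\ref{theorem:main-result} give $N_0 = (p^t+1)/d - 1$ and $N_k = (p^t+1)/d$ for $k \in \{1, \ldots, d-1\}$, and since $2d \mid p^t+1$ the quantity $(p^t+1)/d$ is even. A parity case analysis then forces $\chi(\beta) = 1$, hence $\chi(\gamma) = 1$ and $\gamma \in (\F_{p^{2t}}^*)^d$. For $d$ even this already contradicts the non-square property of $\gamma$ since $(\F_{p^{2t}}^*)^d \subseteq (\F_{p^{2t}}^*)^2$; for $d$ odd I would exploit the refined identity $\chi(c+\beta)^2 = \chi(c^2 - \gamma)$ to reduce the problem to a Legendre-type sum over the subfield $\F_{p^t} \subset \F_{p^{2t}}$, then compare to the classical identity $\sum_{x \in \F_{p^{2t}}} \chi|_{\F_{p^{2t}}^*}(x^2 - \gamma) = -1$ and use a Weil-type estimate on the complementary cosets to produce a contradiction for $p^t > 3$.

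In the remaining case $\sigma(A) \neq A$, I would analyze the $3$-dimensional $\F_{p^t}$-subspace $V = A + \sigma(A)$, which is $\sigma$-stable with $V \cap \F_{p^{2t}} = \F_{p^{2t}}$ and $V \cap \F_q^- = \F_{p^t}\nu$ for some $\nu$ satisfying $\sigma(\nu) = -\nu$. Writing $\beta = \mu + \nu$ with $\mu \in \F_{p^{2t}} \setminus \F_{p^t}$ and running a parallel character-sum analysis on both $A$ and $\sigma(A)$ inside $V$ produces the contradiction. The main obstacle will be making the parity and Weil-type estimates in both sub-cases tight enough to yield the sharp threshold $p^t > 3$, which is necessary: in the single excluded case $(p,d,t) = (3,2,1)$ the graph $GP^*(81, 4) = P^*_{81}$ is known to attain the Delsarte bound $\omega = 9$, so the hypothesis cannot be relaxed.
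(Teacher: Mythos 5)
Your route is genuinely different from the paper's, but it has a critical gap. The paper's own Theorem~\ref{theorem:even-number-cliques} (applicable here since $q=p^{4t}$, $0\in I$, and $I=\{0,1,\ldots,d-1\}\neq\{0,2,\ldots,2d-2\}$ for $d\geq 2$) shows that any canonical clique $A$ containing $1$ satisfies $\sigma(A)\neq A$ for $\sigma=\operatorname{Frob}_{p^{2t}}$. So the case $\sigma(A)=A$, on which you spend almost all of your effort, is vacuous, and the case $\sigma(A)\neq A$ is the \emph{only} one that occurs --- yet there you offer no argument beyond ``running a parallel character-sum analysis on both $A$ and $\sigma(A)$ inside $V$ produces the contradiction.'' Once $\beta=\mu+\nu$ with $\mu\notin\F_{p^t}$, the symmetry $c\mapsto -c$ that drives your parity count is gone, and it is not clear what replaces it. In addition, even within your $\sigma(A)=A$ branch, the $d$ odd sub-case rests on an unspecified ``Weil-type estimate'' that is never shown to close down to the sharp threshold $p^t>3$; square-root cancellation bounds of Weil type generically lose constants and would at best exclude large $p^t$, not $p^t\in\{5,7,9,\ldots\}$. (The $d$ even sub-case of your vacuous branch is, for what it is worth, correct: the parity argument does force $\chi(\beta)=1$ and hence $\gamma=\beta^2\in(\F_{p^{2t}}^*)^d\subseteq(\F_{p^{2t}}^*)^2$, contradicting that $\gamma$ is a non-square.)

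For contrast, the paper's proof is a hybrid: Corollary~\ref{cor:generalized-Peisert} combines Theorem~\ref{theorem:main-result} with the character-sum bound over subspaces (Theorem~\ref{charsumcor}, due to Reis) and the explicit lower bound $\bigl|\sum_{j=0}^{d-1}\theta^{j}\bigr|\geq 2d/\pi$ for $I=\{0,1,\ldots,d-1\}$ to force $p^t<40$ when $r=2$; the remaining $28$ pairs $(p^t,d)$ are then eliminated by a computer search (Algorithm~\ref{alg:clique-number}). The fact that the authors resort to computation for $p^t\leq 37$ is a strong signal that a purely theoretical argument reaching the exact threshold $p^t>3$ --- which is what your plan promises --- would require a substantially new idea, not just tightening Weil-type estimates. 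If you want to salvage your approach, the place to invest is the $\sigma(A)\neq A$ case; alternatively, adopt the paper's strategy and accept a finite verification for small $p^t$.
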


We also prove the following two theorems on the number of maximum cliques. Comparing these with Theorem~\ref{EKRPaley}, we see that pseudo-Paley graphs generally behave much differently compared to Paley graphs. Note that Theorem~\ref{thm:at-least-two-max-cliques} generalizes Mullin's theorem~\cite{NM}*{Lemma 3.3.6} for the Peisert graph.

\begin{thm}\label{thm:at-least-two-max-cliques}
Let $X=PP(q, 2d, I)$ be a semi-primitive pseudo-Paley graph with $q=p^{2rt}$ where $r$ is even. Suppose that $0\in I$ and $I\neq \{0, 2, \ldots, 2d-2\}$. If $\omega(X)=\sqrt{q}$, then there are at least 2 maximum cliques in $X$ that contain $\{0,1\}$.
\end{thm}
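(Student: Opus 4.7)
The plan is to produce, given any maximum clique $A \supseteq \{0,1\}$ in $X$, a second maximum clique $A' \neq A$ that also contains $\{0,1\}$. I exploit graph automorphisms of $X$ that fix $\{0,1\}$ setwise, combined with the equal-distribution property of Theorem~\ref{theorem:main-result}.

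First, consider the reflection $\sigma(x) = 1-x$. Because $X$ is undirected, $-1 \in \operatorname{Stab}(D)$, so $\sigma$ is a graph automorphism, and it swaps $0$ and $1$. Hence $\sigma(A) = 1-A$ is a maximum clique containing $\{0,1\}$. If $\sigma(A) \neq A$, we are done; henceforth assume $A = 1-A$.

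Next, I use multiplicative automorphisms furnished by equal distribution. Applying Theorem~\ref{theorem:main-result} to each translate $A-a$ yields that for every $a \in A$, exactly $(\sqrt{q}-1)/d$ elements $a' \in A$ satisfy $a'-a \in C_0$. Note that the hypothesis $I \neq \{0,2,\ldots,2d-2\}$ together with $0 \in I$ forces $d \geq 2$, and then semi-primitivity gives $p^t \geq 2d-1$ and $r \geq 2$, so $(\sqrt{q}-1)/d \geq 4$. For each pair $(a,a') \in A \times A$ with $a \neq a'$ and $a'-a \in C_0$, the affine map $\psi_{a,a'}(x) = (x-a)/(a'-a)$ lies in $\operatorname{Aut}(X)$ (since $(a'-a)^{-1} \in C_0 \subseteq \operatorname{Stab}(D)$) and sends $(a,a')$ to $(0,1)$. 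Hence $\psi_{a,a'}(A)$ is a maximum clique containing $\{0,1\}$, and if any such image with $(a,a') \neq (0,1)$ differs from $A$, we are done. Otherwise every $\psi_{a,a'}$ fixes $A$; in particular, taking $a = 0$ and varying $a' \in A \cap C_0$ shows that $A$ is closed under multiplication by each element of $A \cap C_0$. This forces $H := A \cap C_0$ to be a multiplicative subgroup of $C_0$ of order $(\sqrt{q}-1)/d$, and $A$ to admit the coset decomposition $A = \{0\} \sqcup H \sqcup Hx_2 \sqcup \cdots \sqcup Hx_d$ for some $x_j \in C_{m_j}$.

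The main obstacle lies in this structured scenario: one must use the assumption $I \neq \{0,2,\ldots,2d-2\}$ to derive a contradiction. The hypothesis yields an odd element $m_* \in I$, and the task is to show that a clique of the above rigid form, together with $A = 1-A$ and the oddness of $m_*$, violates the clique condition $A-A \subseteq D \cup \{0\}$. Concretely, one would analyze how $\sigma$ permutes the cosets $Hx_j$, show that the $\sigma$-images of $Hx_{j_*}$ (where $m_{j_*} = m_*$) land in cosets whose cyclotomic indices are forced into an inconsistent system of parity constraints modulo $2d$, and conclude that no such $A$ exists. Carrying out this cyclotomic and parity case analysis — which must separately handle the subcases $-1 \in C_0$ and $-1 \in C_d$ — is where I expect the technical heart of the proof to lie, and it will rule out the uniqueness assumption and thereby produce the desired second maximum clique.
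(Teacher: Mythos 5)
Your first half is essentially the paper's own argument: assuming a unique maximum clique $A\supseteq\{0,1\}$, the automorphisms $x\mapsto y^{-1}x$ for $y\in A\cap C_0$ force $H=A\cap C_0$ to be the multiplicative subgroup $\langle g^{d(\sqrt{q}+1)}\rangle$, and each $A\cap C_{m_j}$ to be a single $H$-coset, with Theorem~\ref{theorem:main-result} supplying the cardinalities. (The preliminary reduction via $x\mapsto 1-x$ is not needed for this; note also that $-1\in C_0$ automatically when $r$ is even, since $2d$ divides $(q-1)/2$, so your planned dichotomy between $-1\in C_0$ and $-1\in C_d$ is vacuous.) Up to the rigid decomposition $A=\{0\}\sqcup H\sqcup Hx_2\sqcup\cdots\sqcup Hx_d$ the proposal is correct and matches the paper.

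The gap is the final step, which you explicitly leave as a plan: you propose to derive a contradiction from $A=1-A$ via a parity analysis of how $\sigma(x)=1-x$ permutes the cosets $Hx_j$, but you never exhibit the "inconsistent system of parity constraints," and it is not clear one exists --- there is no a priori control on which cyclotomic class $1-x$ lands in for $x\in Hx_j$. The paper finishes with an additive, not reflective, argument (this is exactly Lemma~\ref{lem:naive}): pass to the minimal representation, where the hypotheses $0\in I$ and $I\neq\{0,2,\ldots,2d-2\}$ still give $d\geq 2$; for $x,y\in B_0\colonequals\{0\}\cup H$ with $x-y\in C_k$, multiplying by a representative of each coset $A\cap C_{m_j}$ and using the clique condition shows $k+m_j\in I$ for every $j$, i.e.\ $I+k=I$, whence $k\equiv 0\pmod{2d}$ by Lemma~\ref{lem:I=I+k}; thus $B_0-B_0\subseteq B_0$, so $B_0$ is an additive subgroup of order $p^n=\frac{\sqrt{q}-1}{d}+1$, and the resulting equation $p^{rt}-1=d(p^n-1)$ is incompatible with $p^t\equiv-1\pmod{2d}$. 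Without this (or some other) concrete impossibility argument for the rigid coset structure, your proof is incomplete at its decisive point.
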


\begin{thm}\label{theorem:even-number-cliques}
Let $X=PP(q, 2d, I)$ be a semi-primitive pseudo-Paley graph with $q=p^{4t}$. Suppose that $0\in I$ and $I\neq \{0, 2, \ldots, 2d-2\}$. Then $X$ has an even number of cliques (possibly zero) of size $\sqrt{q}$ that are subspaces defined over $\F_{p^t}$ containing $1$. Furthermore, these maximum cliques come in pairs interchanged by the map $x\mapsto x^{\sqrt{q}}$.
\end{thm}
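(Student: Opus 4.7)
The plan is to exhibit a fixed-point-free involution on the set $\mathcal{M}$ of cliques in $X$ of size $\sqrt{q}$ that are $\F_{p^t}$-subspaces containing $1$; such an involution partitions $\mathcal{M}$ into orbits of size $2$, yielding both the even count and the pairing. The involution is the Galois map $\phi(x)=x^{\sqrt{q}}$ prescribed in the statement.

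First I verify that $\phi$ restricts to an involution on $\mathcal{M}$. Since $q=p^{4t}$, $\phi$ generates $\operatorname{Gal}(\F_q/\F_{\sqrt{q}})$, so $\phi^2=\operatorname{id}$ and $\phi$ pointwise fixes $\F_{\sqrt{q}}$; in particular $\phi(0)=0$, $\phi(1)=1$, and since $\F_{p^t}\subset \F_{\sqrt{q}}$, the map $\phi$ is $\F_{p^t}$-linear. The semi-primitive hypothesis $p^t\equiv -1\pmod{2d}$ gives $p^{2t}\equiv 1\pmod{2d}$, so $\phi(C_j)=g^{jp^{2t}}C_0=C_j$ for every $j$, hence $\phi(D)=D$ and $\phi$ is a graph automorphism of $X$. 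Consequently $\phi$ maps $\mathcal{M}$ to $\mathcal{M}$.

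The heart of the proof is ruling out $\phi$-fixed points in $\mathcal{M}$. Suppose $A\in\mathcal{M}$ with $\phi(A)=A$. Since $\phi|_A$ is an $\F_{p^t}$-linear involution and $p$ is odd, we decompose $A=A_+\oplus A_-$ into $\pm 1$-eigenspaces, where $A_+=A\cap \F_{\sqrt{q}}$ and $A_-\subseteq V_-:=\{x\in\F_q : x^{\sqrt{q}}=-x\}$. Since $1\in A_+$ and $\dim_{\F_{p^t}}\F_{\sqrt{q}}=2$, either $A_+=\F_{\sqrt{q}}$ or $A_+=\F_{p^t}$. In the first case, $A=\F_{\sqrt{q}}$ by cardinality; being a clique requires $\F_{\sqrt{q}}^*\subseteq D$, and since $\F_{\sqrt{q}}^*$ meets each even class $C_0,C_2,\ldots,C_{2d-2}$ in exactly $(\sqrt{q}-1)/d$ elements, this would force $I=\{0,2,\ldots,2d-2\}$, contradicting the hypothesis.

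The remaining case $A_+=\F_{p^t}$ (so $A=\F_{p^t}\oplus \F_{p^t}\omega$ for some $\omega\in V_-\setminus\{0\}$, which forces the entire affine $\F_{p^t}$-line $\F_{p^t}+\omega$ to lie in $D$) is the main obstacle. I would combine Theorem~\ref{theorem:main-result}, which gives $|A\cap C_{m_j}|=(\sqrt{q}-1)/d$ for each $j\in I$, with the orbit structure of $\phi$ on $A$: the $\phi$-fixed part of $A\setminus\{0\}$ equals $\F_{p^t}^*\subset C_0$ (contributing $p^t-1$ elements there), while the remaining $p^t(p^t-1)$ elements form $\phi$-orbits of size $2$, each contained in a single cyclotomic class. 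Since $\omega\in V_-$ sits in an odd cyclotomic class, $I$ necessarily contains an odd index, and a careful accounting of the orbit contributions against the equal-distribution constraint is expected to yield a contradiction with $I\neq\{0,2,\ldots,2d-2\}$, thereby ruling out the case and completing the proof.
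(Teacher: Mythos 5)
Your setup is sound and matches the paper's strategy: the map $\phi(x)=x^{\sqrt{q}}$ is indeed an involutive graph automorphism preserving $\mathcal{M}$, the eigenspace decomposition $A=A_+\oplus A_-$ with $A_+=A\cap\F_{\sqrt{q}}$ is correct, and the case $A_+=\F_{\sqrt{q}}$ is properly eliminated (this is Lemma~\ref{lem:subfield-clique} in the paper). However, the decisive case $A_+=\F_{p^t}$ is not actually proved: you write that a ``careful accounting of the orbit contributions against the equal-distribution constraint is expected to yield a contradiction,'' and the specific accounting you gesture at does not work. Counting \emph{elements} in size-$2$ orbits of $\phi$ within each class only constrains $|A\cap C_{m_j}|$ modulo $2$, but Theorem~\ref{theorem:main-result} gives $|A\cap C_{m_j}|=(p^{2t}-1)/d=(p^t-1)\cdot\frac{p^t+1}{d}$, which is already even (indeed divisible by $4$) because $2d\mid p^t+1$ and $p$ is odd; likewise the fixed part $\F_{p^t}^*$ contributes the even number $p^t-1$ to $C_0$. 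So element-level parity yields no contradiction, and the observation that $\omega$ lies in an odd class does not by itself conflict with $I\neq\{0,2,\ldots,2d-2\}$.

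The missing idea is to count at the level of $\F_{p^t}^{\ast}$-cosets rather than elements. Since $\F_{p^t}^*\subset C_0$ (Lemma~\ref{lem:C_0-contains-Fpt}), each class $C_j$ is a union of $\F_{p^t}^{\ast}$-cosets, and $A\setminus\{0\}$ consists of $p^t+1$ such cosets, each lying in a single class. The automorphism $\phi$ permutes these cosets, fixing exactly two of them ($\F_{p^t}^{\ast}$, and $\omega\F_{p^t}^{\ast}$ since $-1\in\F_{p^t}^{\ast}$) and pairing the rest; to know the remaining cosets are genuinely swapped rather than fixed one needs that $x$ and $x^{p^{2t}}$ span distinct cosets, which is where Lemma~\ref{lem:group-theory} enters. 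Consequently the number of cosets of $A\setminus\{0\}$ inside $C_0$ is \emph{odd}, whereas Theorem~\ref{theorem:main-result} forces it to equal $\frac{p^t+1}{d}=2\cdot\frac{p^t+1}{2d}$, an even integer --- that is the contradiction. (The paper runs the equivalent argument as a divisibility statement: $C_0\cap(A\setminus\F_{p^t}^{\ast})$ is partitioned into blocks $x\F_{p^t}^{\ast}\cup x^{p^{2t}}\F_{p^t}^{\ast}$ of size $2(p^t-1)$, but $\frac{p^{2t}-1}{d}-(p^t-1)$ is not divisible by $2(p^t-1)$.) Until this counting is carried out at the coset level, the fixed-point-free claim, and hence the theorem, is not established.
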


\subsection{Structure of the paper} We describe the flowchart of the paper. In Section~\ref{sect:background}, we introduce the relevant notations, provide further background, and present preliminary properties. We present the proof of Theorem~\ref{theorem:main-result} and  Theorem~\ref{thm:at-least-two-max-cliques} in Section~\ref{sect:proof-of-main-result}. In Section~\ref{sect:isomorphism}, we prove that the families of graphs considered in this paper are almost always not isomorphic to Peisert graphs or Paley graphs. In Section~\ref{sect:subspace-structure}, we prove Theorem~\ref{thm:clique_number}, Theorem~\ref{thm: Peisert_clique_number}, and Theorem~\ref{theorem:even-number-cliques}. In Appendix~\ref{sect:gauss-sums}, we present all the necessary details on Gauss sums and give a detailed proof of Proposition~\ref{proposition:main-result} in Appendix~\ref{sect:proof-of-main-prop}. Finally, in Appendix~\ref{sect:numerical-evidence}, we report results from our numerical experiments, including two algorithms. 

\section{Background and overview of the paper}\label{sect:background}

\subsection{Peisert graphs and generalized Peisert graphs}\label{subsect: Paley}


We first recall the definitions of Peisert graphs \cite{WP2}. The {\em Peisert graph} of order $q=p^r$, where $p$ is a prime such that $p \equiv 3 \pmod 4$ and $r$ is even, denoted by $P^*_q$, is the Cayley graph $\operatorname{Cay}(\F_{q}^{+}, M_q)$ with $M_q \colonequals \{g^j: j \equiv 0,1 \pmod 4\},$ where $g$ is a primitive root of the field $\F_q$. We will discuss more properties of Peisert graphs in Section~\ref{sect:isomorphism}.

Motivated by the similarity shared among Paley graphs, generalized Paley graphs, and Peisert graphs, Mullin introduced generalized Peisert graphs; see \cite{NM}*{Section 5.3} and \cite{AY}*{Definition 2.8}. Observe that $P_q=GP^*(q,2)$ and $P_q^*=GP^*(q,4)$. 

\begin{defn}[generalized Peisert graph] \label{GP*}
Let $d$ be a positive integer and $q$ a prime power such that $q \equiv 1 \pmod {4d}$. The {\em $2d$-th power Peisert graph} of order $q$, denoted $GP^*(q,2d)$, is the Cayley graph $\operatorname{Cay}(\F_q^+, M_{q,2d})$, where
$M_{q,2d}=\{g^{2dk+j}: 0\leq j \leq d-1, k \in \Z\},$
and $g$ is a primitive root of $\F_q$.
\end{defn}

Next, we will see that our Definition~\ref{def:pp} is general enough to encompass both Peisert graphs and generalized Peisert graphs. In the following discussion, when we are talking about a cyclotomic class $C_j$ in $PP(q,2d,I)$, we always assume $C_j$ is a $2d$-th cyclotomic class of $\F_q$.

\begin{ex}\label{ex:pp} By specializing the choice of the set $I$, we recover several important families of Cayley graphs we mentioned previously:
\begin{enumerate}
    \item \label{ex:pp-1} $PP(q,2,\{0\})$ is the Paley graph $P_q=\operatorname{Cay}(\F_q^{+}, C_0)$. 
    \item \label{ex:pp-2} $PP(q, 4, \{0, 1\})$ is the Peisert graph $P^{\ast}_{q} = \operatorname{Cay}(\F_q^{+}, C_0\cup C_1)$.
    \item \label{ex:pp-3} $PP(q,2d,\{0, 2, 4, \ldots,2d-2\})$ is also the Paley graph $P_q=\operatorname{Cay}(\F_{q}^{+}, C_0\cup C_2\cup \ldots \cup C_{2d-2})$.
     \item \label{ex:pp-4} $PP(q,2d,\{1, 3, 5, \ldots,2d-1\})$ is the complement of the Paley graph $P_q$, and it is isomorphic to $P_q$.
    \item \label{ex:pp-5} $PP(q,2d,\{0, 1, 2, \ldots,d-1\})$ is the generalized Peisert graph $GP^{\ast}(q, 2d)=\operatorname{Cay}(\F_q^+, C_0 \cup C_1 \cup \ldots \cup C_{d-1})$. 
    \item \label{ex:pp-6} Suppose that $d=2m$ is even. If $I=\{i, i+4, \ldots, i+4(m-1)\}\cup \{j, j+4, \ldots, j+4(m-1)\}$ for some $i<j\in \{0, 1, 2, 3\}$, then $PP(q,2d, I)$ is isomorphic to the Paley graph $P_q$ when $j-i=2$, and $PP(q,2d, I)$ is isomorphic to the Peisert graph $P_{q}^{\ast}$ when $j-i \in \{1, 3\}$.
\end{enumerate}
\end{ex}

From Example~\ref{ex:pp}, we see that two different choices of $(d, I)$ may represent the same graph. In general, if $d'\mid d$ and $I=\bigcup_{j\in I'} \{j + 2d'k :  0\leq k\leq \frac{d}{d'}-1 \}$, then $PP(q, 2d, I)=PP(q, 2d', I')$. It is natural to work with the representation with the smallest value of $d$. This motivates the following definition.

\begin{defn}[Minimal representation] We say $X=PP(q,2d,I)$ is in its {\em minimal representation} if there is no $d' \mid d$ and $I' \subset \{0,1,\ldots, 2d'-1\}$ such that $d'<d$ and $X=PP(q,2d', I')$. Note that the minimal representation of $X$ is unique. 
\end{defn}

Define $I+k \colonequals \{j+k \pmod {2d}: j \in I\}$. The next lemma gives a necessary and sufficient condition for the equality $I=I+k$ to hold.

\begin{lem}\label{lem:I=I+k}
Let $X=PP(q,2d,I)$ and let $PP(q,2d', I')$ be its minimal representation. 
Then $I=I+k$ if and only if $k$ is a multiple of $2d'$.
\end{lem}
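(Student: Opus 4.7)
The plan is to study the stabilizer
\[
H := \{k \in \Z/2d\Z : I + k = I\},
\]
a subgroup of the cyclic group $\Z/2d\Z$ and hence of the form $\langle e \rangle$ for a unique divisor $e$ of $2d$. The lemma then reduces to the single claim $e = 2d'$: once this is established, $I + k = I$ becomes $k \bmod 2d \in \langle 2d' \rangle$, which is equivalent to $2d' \mid k$ as integers (using that $2d' \mid 2d$). For the inclusion $\langle 2d' \rangle \subseteq H$, I would invoke the description of $I$ in terms of $I'$ recalled immediately before the lemma: since $PP(q, 2d', I')$ is the minimal representation,
\[
I \;=\; \bigcup_{j \in I'}\bigl\{\,j + 2d' k \pmod{2d} : 0 \leq k \leq d/d' - 1\,\bigr\},
\]
which exhibits $I$ as a union of cosets of $\langle 2d' \rangle$ in $\Z/2d\Z$. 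Hence $I + 2d' = I$, so $e \mid 2d'$.

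For the reverse inclusion, I would first show that $e$ is even. Because $I$ is a union of $\langle e \rangle$-orbits in $\Z/2d\Z$, each of size $2d/e$, the cardinality $|I| = d$ must be a multiple of $2d/e$; this forces $e/2 \in \Z$. Writing $e = 2e'$, we already know $e' \mid d'$ (in particular $e' \mid d$). Now $I + 2e' = I$ lets me write
\[
I \;=\; \bigcup_{j \in I''}\bigl\{\,j + 2e' k \pmod{2d} : 0 \leq k \leq d/e' - 1\,\bigr\}
\]
for a unique $I'' \subset \{0, 1, \ldots, 2e' - 1\}$ with $|I''| = e'$. Because $\langle g^{2d} \rangle$ is a subgroup of $\langle g^{2e'} \rangle$ of index $d/e'$, each $2e'$-th cyclotomic class decomposes as $C'_j = \bigsqcup_{k=0}^{d/e' - 1} C_{j + 2e' k \pmod{2d}}$, so
\[
D \;=\; \bigcup_{m \in I} C_m \;=\; \bigcup_{j \in I''} C'_j,
\]
which exhibits $X$ as $PP(q, 2e', I'')$. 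Minimality of $d'$ then forces $e' \geq d'$, and combined with $e' \mid d'$ this yields $e' = d'$ and hence $e = 2d'$.

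The main technical step is the repackaging of $D$ between the $2d$-th and $2e'$-th cyclotomic partitions in the second paragraph; this is essentially bookkeeping once the inclusions of multiplicative subgroups are spelled out. Conceptually, the lemma says that the translation-stabilizer subgroup of $I$ in $\Z/2d\Z$ exactly detects the redundancy of the representation $PP(q, 2d, I)$ relative to its minimal form.
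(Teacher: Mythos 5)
Your proof is correct and follows essentially the same route as the paper's: both directions rest on the same observations, namely that $I$ is a union of cosets of the translation subgroup (forcing $|I|=d$ to be a multiple of the coset length and hence the generator to be even), followed by repackaging $X$ as a $PP(q,2e',\cdot)$ graph and invoking minimality. Your stabilizer-subgroup framing and the explicit decomposition of the $2e'$-th cyclotomic classes into $2d$-th ones just make explicit some bookkeeping the paper leaves implicit.
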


\begin{proof}
Since $d'\mid d$ and $I=\bigcup_{j\in I'} \{j + 2d'r :  0\leq r\leq \frac{d}{d'}-1 \}$, it follows that $I=I+k$ for any $k$ that is a multiple of $2d'$.

If $I=I+k$, then $I$ is a union of arithmetic progressions, each with common difference $\gcd(2d,k)$. The length of each arithmetic progression appearing in the union is $\frac{2d}{\gcd(2d, k)}$. Therefore, $d=|I|$ is a multiple of $\frac{2d}{\gcd(2d, k)}$. It follows that $\gcd(2d, k)$ is even, that is, $\gcd(2d, k)=2e$ for some $e\in\mathbb{N}$. In particular, $X=PP(q,2d,I)=PP(q,2e,J)$, where $J=\{j \pmod {2e}: j \in I\}$. Since $PP(q,2d',I')$ is the minimal representation of $X$, it follows that $2d' \mid 2e$, and so $2d' \mid k$. \end{proof}

\begin{rem}\label{rmk:I=I+k}
The condition $I=I+k$ is connected to the symmetries of the graph $X=PP(q,2d,I)$. More precisely, the map $x\mapsto g^k x$ induces a graph automorphism of $X$ if and only if $I+k=I$. 
\end{rem} 

\subsection{Semi-primitive pseudo-Paley graph}

One of our aims in the present paper is to draw parallels between Paley graphs and the graphs of the form $PP(q, 2d, I)$. One special property Paley graphs enjoy is the following ``equal contribution" statement. Our main result, Theorem~\ref{theorem:main-result}, extends this property to a larger family of pseudo-Paley graphs.  

\begin{lem}\label{lem:Paley-equal-contribution}
Let $X=PP(q, 2d, I)$ be a semi-primitive pseudo-Paley graph with $q=p^{2rt}$ where $r$ is even and $I=\{0, 2, \ldots, 2d-2\}$, so that $X$ is the Paley graph $P_q$. If $A$ is a maximum clique in $X$ such that $0 \in A$, then $$|A \cap C_{0}|=|A \cap C_{2}|=\cdots=|A \cap C_{2d-2}|=\frac{\sqrt{q}-1}{d}.$$
\end{lem}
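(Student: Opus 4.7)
The plan is to apply Blokhuis' theorem (Theorem~\ref{EKRPaley}) to pin down the explicit form of $A$, and then count directly. Since $A$ is a maximum clique in $P_q$ containing $0$, Blokhuis' theorem writes $A$ as an affine image $a\F_{\sqrt{q}} + b$ of the subfield. The condition $0 \in A$ means $b = -a y_0$ for some $y_0 \in \F_{\sqrt{q}}$; absorbing the shift by $-y_0$ into $\F_{\sqrt{q}}$ (which is closed under translations by its own elements), we obtain $A = a\F_{\sqrt{q}}$ for some $a \in \F_q^*$. Writing $a = g^s$, this gives $A \setminus \{0\} = g^s \cdot \langle g^{\sqrt{q}+1}\rangle$, since $\F_{\sqrt{q}}^*$ is the unique subgroup of $\F_q^*$ of order $\sqrt{q}-1$.

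The key arithmetic input is that the semi-primitive hypothesis $p^t \equiv -1 \pmod{2d}$, together with $r$ being even, forces
\[ \sqrt{q} = p^{rt} = (p^t)^r \equiv (-1)^r = 1 \pmod{2d}. \]
Consequently $\sqrt{q}+1 \equiv 2 \pmod{2d}$ and $2d \mid (\sqrt{q}-1)$. Since $p$ is odd, $\sqrt{q}+1$ is even, so every element of $\F_{\sqrt{q}}^*$ is a square in $\F_q^*$. In particular, the clique condition $A \setminus \{0\} \subseteq (\F_q^*)^2$ forces $a$ itself to be a square, which means $s$ is even.

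Putting these together, $|A \cap C_{2i}|$ counts the number of $k \in \{0, 1, \ldots, \sqrt{q}-2\}$ with $s + k(\sqrt{q}+1) \equiv 2i \pmod{2d}$, which by the previous paragraph reduces to $s + 2k \equiv 2i \pmod{2d}$. The map $k \mapsto s + 2k \pmod{2d}$ has period $d$ in $k$ and hits each of the $d$ even residues once per period; since $\sqrt{q}-1$ is a multiple of $2d$, as $k$ runs through its $\sqrt{q}-1$ values each even residue modulo $2d$ is visited exactly $(\sqrt{q}-1)/d$ times. Therefore $|A \cap C_{2i}| = (\sqrt{q}-1)/d$ for every $i \in \{0, 1, \ldots, d-1\}$, as desired. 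The only delicate step is the reduction $A = a \F_{\sqrt{q}}$ from the full affine form, but this is immediate once $0 \in A$; the rest is pure modular arithmetic, so no serious obstacle is anticipated.
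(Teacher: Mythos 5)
Your proof is correct and follows essentially the same route as the paper: invoke Blokhuis' theorem to reduce to $A=a\F_{\sqrt{q}}$ with $a$ a square, use the congruence $\sqrt{q}+1\equiv 2\pmod{2d}$ (coming from $p^t\equiv -1\pmod{2d}$ and $r$ even), and count how the powers of $g^{\sqrt{q}+1}$ distribute among the even cyclotomic classes. The paper's version is just a terser statement of the same modular-arithmetic count.
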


\begin{proof}
Note that $g^{p^{rt}+1}$ is a primitive root of the subfield $\F_{\sqrt{q}}$. Since $p^t \equiv -1 \pmod {2d}$ and $r$ is even, $p^{rt}+1 \equiv 2 \pmod {2d}$.  
Let $A$ be a maximum clique in $X$ such that $0 \in A$. Since $0 \in A$, by Theorem \ref{EKRPaley}, $A=a\F_{\sqrt{q}}$ for some square $a \in \F_q^*$.  Observe that for each $0\leq j\leq d-1$,
\begin{equation}\label{subfield}
\F_{\sqrt{q}}\cap C_{2j} = \{ (g^{p^{rt}+1})^k \ : \ k \equiv j \pmod d\ \} \ \ \Rightarrow \ \ |\F_{\sqrt{q}}\cap C_{2j}|=\frac{\sqrt{q}-1}{d}.
\end{equation}
The same conclusion holds for $A=a \F_{\sqrt{q}}$. 
\end{proof}

From Lemma~\ref{lem:Paley-equal-contribution}, we see that the subfield $\F_{\sqrt{q}}$ admits the following presentation
$$
\F_{\sqrt{q}}=\{0\} \cup \bigcup_{j=0}^{d-1} \{ (g^{p^{rt}+1})^k \ : \ k \equiv j \pmod d\ \}=\{0\} \cup \bigcup_{j=0}^{d-1} g^{2j} \langle g^{d(\sqrt{q}+1)}\rangle.
$$
This motivates the following construction of a set in $PP(q,2d,I)$:
\begin{equation}\label{naive_construction}
A(q,2d,I)=\{0\} \cup \bigcup_{j=1}^{d} g^{m_j} \langle g^{d(\sqrt{q}+1)}\rangle.
\end{equation}
The following lemma shows this ``naive construction" of $A(q,2d,I)$ does not give a clique in $PP(q,2d,I)$ except for the Paley graph.

\begin{lem}\label{lem:naive}
Let $X=PP(q, 2d, I)$ be a semi-primitive pseudo-Paley graph with $q=p^{2rt}$ where $r$ is even. Then $A(q,2d,I)$ defined in equation~\eqref{naive_construction}
is a clique in $X$ if and only if $I = \{0,2,\ldots, 2d-2\}$ or $I=\{1,3, \ldots, 2d-1\}$.
\end{lem}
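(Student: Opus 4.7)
The plan is to prove both directions of the equivalence. For the sufficient direction, when $I=\{0,2,\ldots,2d-2\}$ the graph $X$ is the Paley graph $P_q$ by Example~\ref{ex:pp}(\ref{ex:pp-3}), and the subfield presentation $\F_{\sqrt{q}}=\{0\}\cup\bigcup_{j=0}^{d-1}g^{2j}\langle g^{d(\sqrt{q}+1)}\rangle$ established just above shows that $A(q,2d,I)$ coincides with the subfield $\F_{\sqrt{q}}$, a clique in $P_q$. When $I=\{1,3,\ldots,2d-1\}$, Example~\ref{ex:pp}(\ref{ex:pp-4}) identifies $X$ with the complement of $P_q$, and a direct computation gives $A(q,2d,I)=g\cdot\F_{\sqrt{q}}$; every nonzero difference $g(x-y)$ with $x,y\in\F_{\sqrt{q}}$ then lies in $g\cdot(C_0\cup C_2\cup\cdots\cup C_{2d-2})=C_1\cup C_3\cup\cdots\cup C_{2d-1}=D$, so $A$ is again a clique.

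For the necessary direction, suppose $A=A(q,2d,I)$ is a clique in $X$, fix any $m_*\in I$, and set $H=\langle g^{d(\sqrt{q}+1)}\rangle\subseteq\F_{\sqrt{q}}^*$. For distinct $h_1,h_2\in H$, the difference $g^{m_*}h_1-g^{m_*}h_2=g^{m_*}(h_1-h_2)$ lies in $g^{m_*}\cdot((H-H)\setminus\{0\})$. My key claim is that $(H-H)\setminus\{0\}=\F_{\sqrt{q}}^*$. Granted this, these differences sweep out $g^{m_*}\F_{\sqrt{q}}^*$, and since $\F_{\sqrt{q}}^*\subset C_0\cup C_2\cup\cdots\cup C_{2d-2}$, they cover all $d$ cyclotomic classes $C_{m_*},C_{m_*+2},\ldots,C_{m_*+2d-2}\pmod{2d}$. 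For $A$ to be a clique all of these must lie in $D=\bigcup_{m\in I}C_m$, and combined with $|I|=d$ this forces $I=\{m_*,m_*+2,\ldots,m_*+2d-2\}\pmod{2d}$; depending on the parity of $m_*$ this set equals $\{0,2,\ldots,2d-2\}$ or $\{1,3,\ldots,2d-1\}$, as desired.

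The main obstacle is justifying $(H-H)\setminus\{0\}=\F_{\sqrt{q}}^*$. The semi-primitive identity $p^{kt}\equiv(-1)^k\pmod{2d}$ yields $\sum_{k=0}^{r-1}p^{kt}\equiv 0\pmod{2d}$ since $r$ is even, so $p^t-1$ divides $|H|=(\sqrt{q}-1)/d$, and hence the unique subgroup $\F_{p^t}^*$ of order $p^t-1$ in $\F_q^*$ sits inside $H$. Thus $H$, and therefore $H-H$, is $\F_{p^t}^*$-invariant, and together with the obvious invariance $H\cdot(H-H)=H-H$ this exhibits $(H-H)\setminus\{0\}$ as a union of $H$-cosets inside $\F_{\sqrt{q}}^*$. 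To see it is the full $\F_{\sqrt{q}}^*$, I plan to apply Fourier inversion over $\F_{\sqrt{q}}^+$ to express the representation count $|\{(h_1,h_2)\in H\times H:h_1-h_2=x\}|$ as a character sum and bound the nonprincipal terms using the explicit evaluation of semi-primitive Gauss sums developed in Appendix~\ref{sect:gauss-sums}, obtaining a strictly positive lower bound that rules out any missing coset.
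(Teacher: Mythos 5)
Your sufficiency direction is fine and matches the paper's (identify $A(q,2d,I)$ with $\F_{\sqrt{q}}$ or $g\cdot\F_{\sqrt{q}}$ and use the presentation of $\F_{\sqrt{q}}$ as a union of cyclotomic classes). The problem is in the necessity direction, which rests entirely on the unproved claim that $(H-H)\setminus\{0\}=\F_{\sqrt{q}}^*$ for $H=\langle g^{d(\sqrt{q}+1)}\rangle$, and the method you propose for it does not close. Writing $Q=\sqrt{q}$, the subgroup $H$ has index $d$ in $\F_Q^*$, so the representation count $N(x)=\#\{(h_1,h_2)\in H^2:h_1-h_2=x\}$ has main term $\tfrac{Q}{d^2}$ while the nonprincipal contribution, even using the exact semi-primitive evaluation $|J(\chi,\psi)|=\sqrt{Q}$, is of size up to $\tfrac{(d-1)(d-2)}{d^2}\sqrt{Q}$ for the worst class. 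Positivity by this route therefore needs roughly $\sqrt{Q}=q^{1/4}>(d-1)(d-2)$. But in the extremal case $r=2$ and $p^t=2d-1$ one has $q^{1/4}=p^t=2d-1$, which is far smaller than $(d-1)(d-2)$ once $d\geq 5$ (e.g.\ $q=3^8$, $2d=10$: main term $\approx 81/25$, error bound $\approx 4.3$). So ``bounding the nonprincipal terms'' by absolute value cannot yield a strictly positive lower bound in general; you would instead need the exact signs of the semi-primitive Gauss sums over the subfield $\F_{\sqrt{q}}$ (which, unlike the situation over $\F_q$ in Corollary~\ref{Gausssum}, are not uniformly $-\sqrt{Q}$ and depend on $r\bmod 4$ and on the parity of $(p^{t''}+1)/e'$ for each divisor $e'$ of $d$), i.e.\ a full uniform-cyclotomy computation of the diagonal cyclotomic numbers. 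That analysis is absent, so the claim — and with it the whole converse direction — is not established.

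For comparison, the paper's converse avoids character sums entirely and is much shorter: assuming $A(q,2d,I)$ is a clique and $X$ is in minimal representation with $0\in I$, it shows that any difference $x-y\in C_k$ with $x,y\in B_0=A\cap(C_0\cup\{0\})$ forces $I+k=I$, hence $k=0$ by Lemma~\ref{lem:I=I+k}; thus $B_0-B_0\subseteq B_0$, so $B_0$ is an additive subgroup of order $\tfrac{\sqrt{q}-1}{d}+1=p^n$, and the Diophantine equation $p^{rt}-1=d(p^n-1)$ quickly contradicts $p^t\equiv-1\pmod{2d}$. If you want to salvage your approach, the additive-subgroup trick is the missing idea; otherwise you must carry out the exact cyclotomic-number computation over $\F_{\sqrt{q}}$ rather than a triangle-inequality estimate.
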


\begin{proof}
If $I=\{0,2,\ldots, 2d-2\}$, then the conclusion follows from Lemma~\ref{lem:Paley-equal-contribution}. The case $I=\{1,3, \ldots, 2d-1\}$ is similar. 

Conversely, assume that $I$ is different from $\{0, 2, \ldots, 2d-2\}$ and $\{1, 3, \ldots, 2d-1\}$. Given $X=PP(q,2d,I)$, without loss of generality, we may assume that $m_1=0$, and that $X$ is already in its minimal representation: indeed, if $PP(q, 2d, I)=PP(q,2d', I')$, then $A(q,2d',I')=A(q,2d,I)$. Thus, by hypothesis, $d\geq 2$.

Assume, to the contrary, that $A(q,2d,I)$ is a clique in $X$. Let $B_j=A(q,2d,I) \cap (C_j \cup \{0\})$ for each $0 \leq j \leq 2d-1$. If $x,y \in B_0$ such that $x-y \in C_k$ with $0\leq k\leq 2d-1$, then for each $1 \leq j \leq d$, we have $g^{m_j}x, g^{m_j}y \in B_{m_j}$ and $g^{m_j}x-g^{m_j}y \in C_{k+m_j}$. As a result, $I+k=\{m_j+k \pmod {2d}: 1 \leq j \leq d\}=I$. By Lemma~\ref{lem:I=I+k}, $2d|k$ as $PP(q,2d,I)$ is the minimal representation of $X$. Thus, $k=0$ and $B_0-B_0 \subset B_0$.

From $B_0-B_0\subset B_0$ we infer that $B_0$ is a subgroup of $\F_{q}^{+}$, and must have cardinality $|B_0|=p^n$ for some $n\geq 1$. We show this is impossible by analyzing the following Diophantine equation: 
$$
\frac{\sqrt{q}-1}{d}+1 = |B_0| = p^n.
$$
Writing $\sqrt{q}=p^{rt}$, and rearranging the equation, we obtain $p^{rt}-1 = d(p^n-1)$. In particular, $d\equiv 1 \pmod{p^n}$, implying that $d \geq p^n+1$. Combining $d\geq p^n+1$ and $p^{rt}-1=d(p^n-1)$, we obtain $rt\geq 2n$. Thus, $d \geq p^{rt/2}+1$. Since $p^t\equiv -1\pmod{2d}$, we have $p^t\geq 2d-1>2p^{rt/2} \geq 2p^t$, a contradiction. This proves that $A(q, 2d, I)$ is not a clique in $X$.
\end{proof}

We will later see in the proof of Theorem~\ref{thm:at-least-two-max-cliques} how Lemma~\ref{lem:naive} can help us distinguish $X$ from the Paley graph. Indeed, we will show that most semi-primitive pseudo-Paley graphs are not isomorphic to the Paley graph; see Corollary~\ref{cor:non-isomorphism-semi-primitive} for a precise criterion.

\subsection{Further motivation}\label{subsect:further-motivation}
Variants of Erd\H{o}s-{K}o-{R}ado (EKR) theorem have been studied intensively; the book by Godsil and Meagher~\cite{GM} provides an excellent survey. Typically, EKR-type results state that the extremal configurations must be canonical in the sense that they have a remarkably simple structure compared to the complicated ground space. Theorem~\ref{EKRPaley} can be regarded as the EKR theorem of Paley graphs \cite{GM}*{Section 5.9}: if we view cliques with a subfield structure as canonical cliques, then Theorem~\ref{EKRPaley} states that all maximum cliques are canonical.

In our setting, $X=PP(q,2d,I)$ is a Cayley graph $\operatorname{Cay}(\F_q^+, D)$, so $A\subset\F_q$ forms a clique in $X$ if and only if $A-A \subset D \cup \{0\}$, where $A-A=\{a-a': a,a' \in A\}$. We see that additive combinatorics plays an important role here because $A-A$ has a rich additive structure, whereas $D$, being a union of cyclotomic classes, has a rich multiplicative structure. This partially explains why estimating the clique number of Paley graphs and other related graphs is a central open problem in additive combinatorics \cite{CL07}*{Section 2.7}: the study of a clique is essentially the study of the interaction between additive structure and multiplicative structure of a set. All results in this paper conditional on Conjecture~\ref{mainconj} can be reformulated unconditionally based on such interaction; for instance, compare Theorem~\ref{thm:clique_number} and Corollary~\ref{cor:clique-number-additive}.

Let $K$ be a subfield of $\F_q$. 
Note that if $V$ is a $K$-subspace, then $V-V=V$; thus, the set $V$ forms a clique in $X$ if and only if $V \subset D \cup \{0\}$. Therefore, it is much easier for a subspace to form a clique in $X$ compared to a generic subset of $\F_q$. We would like to choose the subfield $K$ to serve the role of the prime subfield $\F_{p}$. In our case $q=p^{2rt}=(p^t)^{2r}$ and $t$ is the smallest integer with $p^t\equiv -1\pmod{2d}$. Given Stickelberger's Theorem (Theorem~\ref{forr}), the choice $K=\F_{p^t}$ is a natural replacement of the prime subfield $\F_{p}$. Therefore, we call $V$ a {\em canonical clique} in $X$ if $V$ is a $\F_{p^t}$-affine subspace with $|V|=\sqrt{q}$ such that $V$ forms a clique in $X$. In contrast to a typical EKR-type result, canonical objects may not exist in our situation because $X$ may have a clique number less than $\sqrt{q}$. Using this terminology, Conjecture~\ref{mainconj} states that each clique of size $\sqrt{q}$ in $X$ is canonical. 

We remark that our conjecture is an analog of the celebrated Chv\'atal's conjecture \cite{Ch72} on families of set systems,  which is a variant of the EKR theorem and is still widely open (see Chv\'atal's webpage \cite{Chweb} for related discussion and references on the conjecture). Chv\'atal's conjecture states that for any family $\mathcal{F}$ of subsets of a finite set that is closed under taking subsets (known as an \emph{ideal}), there is a largest intersecting subfamily of $\mathcal{F}$ that is a star. In other words, in any nice set system, there is a canonical maximum intersecting family; our conjecture is actually stronger because it implies that in any nice semi-primitive pseudo-Paley graph (that is, with clique number $\sqrt{q}$), every maximum clique is canonical. In summary, Conjecture~\ref{mainconj} can be viewed as a combination of the EKR theorem and Chv\'atal's conjecture in the context of pseudo-Paley graphs. The following table is a short dictionary between concepts in Chv\'atal's conjecture (first column) and the present work (second column).

\begin{center}
\begin{tabular}{ |c|c| } 
 \hline
  uniform set system & Paley graph \\ 
 \hline
 set system & semi-primitive pseudo-Paley graph \\
 \hline
 nice set system & nice semi-primitive pseudo-Paley graph  \\ 
 (ideal, i.e., closed under taking subsets) & (with clique number $\sqrt{q}$) \\
  \hline 
 largest intersecting family & maximum clique\\
 \hline
 canonical intersecting family (star) & canonical clique (affine subspace) \\ 
  \hline
  The original EKR theorem & Blokhuis theorem (Theorem~\ref{EKRPaley}) \\ 
 \hline
  Chv\'atal's conjecture + EKR & Conjecture~\ref{mainconj} \\ 
 \hline
\end{tabular}
\end{center}

\section{Proof of Theorem \ref{theorem:main-result}}\label{sect:proof-of-main-result}
The novel idea in the proof of Theorem \ref{theorem:main-result} is to establish a system of linear equations in the expressions $|A \cap C_{m_1}|, |A \cap C_{m_2}|, \ldots, |A \cap C_{m_d}|$ and then find a way to solve the system. A key ingredient is the following Fourier analytic characterization of maximum cliques in a semi-primitive pseudo-Paley graph, which is of independent interest. We need the following definition of exponential sums (scaled Fourier coefficients of the indicator function on $A$). Recall that $e_p(x)=\exp(2 \pi i x/p)$ for all $x \in \F_p$, and $\Tr_{\F_q}(\cdot)\colon\F_q \to \F_p$ is the absolute trace map.

\begin{defn} \label{def:expsum}
Let $A$ be a subset of $\F_q$. For each $c \in \F_q^*$, we define
\begin{equation}\label{Sqac}
 S(q,A;c)=\sum_{a\in A} e_p\big(\Tr_{\F_q}(ac)\big).
\end{equation}
\end{defn}

\begin{prop}\label{proposition:main-result}
Let $X=PP(q, 2d, I)$ be a semi-primitive pseudo-Paley graph with $q=p^{2rt}$. If $A$ is a clique in $X$, then $|A| \leq \sqrt{q}$; moreover, $|A|=\sqrt{q}$ if and only if $S(q,A;c)=0$ for all $c \in D'$, where $S(q,A;c)$ is defined in equation~\eqref{Sqac} and $D'=\cup_{\ell=1}^{d} C_{-m_\ell}$. 
\end{prop}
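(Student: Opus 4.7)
My plan is to combine a Hoffman-type ratio bound with an explicit Gauss-sum evaluation coming from the semi-primitivity hypothesis. First, since $X=\operatorname{Cay}(\F_q^+,D)$ is a Cayley graph on $\F_q^+$, its eigenvectors are the additive characters $\psi_c(x)=e_p(\Tr_{\F_q}(cx))$, with eigenvalues $\lambda_c=\sum_{x\in D}\psi_c(x)$. Expanding the indicator of each cyclotomic class $C_{m_\ell}$ via multiplicative characters of order dividing $2d$ and applying the Gauss-sum identity $\sum_{x\in\F_q^*}\chi(x)\psi_c(x)=\bar\chi(c)\,g(\chi)$, I obtain
\begin{equation*}
\lambda_c \;=\; \frac{1}{2d}\Bigl(-d \,+\, \sum_{\substack{\chi^{2d}=1\\ \chi\neq 1}} \Bigl(\sum_{\ell=1}^d \bar\chi(g^{m_\ell})\Bigr)\,g(\chi)\,\bar\chi(c)\Bigr).
\end{equation*}
Since $X$ has Paley spectrum by \cite{BWX}, for $c\neq 0$ the value $\lambda_c$ must collapse to either $(-1+\sqrt q)/2$ or $(-1-\sqrt q)/2$, and it clearly depends only on the cyclotomic class of $c$.

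The next step is to pin down exactly which cyclotomic classes correspond to the minus sign. Semi-primitivity of the $2d$-th cyclotomic classes permits an explicit evaluation of every Gauss sum $g(\chi)$ appearing above, via Stickelberger's theorem (the relevant version is recalled in Appendix~\ref{sect:gauss-sums}). A careful bookkeeping of these values, together with the $\bar\chi(c)$-twist in the formula for $\lambda_c$ (which effectively reflects the sign change $m_\ell\mapsto -m_\ell$), will yield the identification
\begin{equation*}
\lambda_c = \tfrac{-1-\sqrt q}{2} \quad\Longleftrightarrow\quad c \in D' = \bigcup_{\ell=1}^{d} C_{-m_\ell}.
\end{equation*}
This is the main technical content of the proposition and will be the primary obstacle, since the Stickelberger sign depends on $p$, $q$, and the order of $\chi$ in ways that require some case analysis; I expect to defer its detailed verification to Appendix~\ref{sect:proof-of-main-prop}.

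Finally, I will apply Hoffman's ratio bound. Because $X$ has Paley spectrum, so does its complement $\overline{X}$, whose smallest eigenvalue is $-(1+\sqrt q)/2$. For a clique $A\subset\F_q$ in $X$ (equivalently, an independent set in $\overline{X}$), write $\mathbf{1}_A=(|A|/q)\mathbf{1}+f$ with $f\perp\mathbf{1}$, and decompose $f=f_++f_-$ along the two non-principal eigenspaces of $X$ corresponding to $\lambda_c=(-1+\sqrt q)/2$ and $\lambda_c=(-1-\sqrt q)/2$ respectively. Computing the quadratic form $\mathbf{1}_A^{T}\overline{M}\mathbf{1}_A$ in two ways (it vanishes because $A$ is independent in $\overline{X}$) and using Parseval, I arrive after simplification at
\begin{equation*}
\|f_-\|^2 \;=\; \frac{|A|\,(\sqrt{q}+1)\,(\sqrt{q}-|A|)}{2q},
\end{equation*}
so non-negativity forces $|A|\le\sqrt q$, with equality if and only if $f_-=0$. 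Converting $f_-=0$ to Fourier language via $\widehat{\mathbf{1}_A}(c)=q^{-1}\,\overline{S(q,A;c)}$, and combining with the identification of the negative eigenspace from the previous step, I conclude that $|A|=\sqrt q$ if and only if $S(q,A;c)=0$ for every $c\in D'$, as desired.
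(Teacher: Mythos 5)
Your proposal is correct, and the computations you sketch do check out: the eigenvalue formula $\lambda_c=\tfrac{1}{2d}\bigl(-d+\sum_{\chi\neq 1}(\sum_{\ell}\bar\chi(g^{m_\ell}))G(\chi)\bar\chi(c)\bigr)$ is right, and once one knows $G(\chi^j)=-\sqrt q$ for every nontrivial $\chi^j$ of order dividing $2d$ (the paper's Corollary~\ref{Gausssum}, via Stickelberger), the orthogonality relations give $\lambda_c=\tfrac{-1-\sqrt q}{2}$ exactly for $c\in D'$ and $\tfrac{-1+\sqrt q}{2}$ otherwise; your identity $\|f_-\|^2=\frac{|A|(\sqrt q+1)(\sqrt q-|A|)}{2q}$ also verifies. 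However, your route is organized differently from the paper's. The paper deliberately avoids the ratio-bound framework: it introduces the function $f(x)=\sum_j c_j\overline{\chi^j(x)}$ (which is $2d$ times the indicator of $D$), evaluates $\sum_{a,b\in A}f(a-b)=2d(|A|^2-|A|)$ using the clique property, and expands the same double sum through \emph{multiplicative} characters and the identity of Lemma~\ref{complete}, isolating the quantities $T_k=\sum_{c\in C_k}|S(c)|^2$; Plancherel then yields both $|A|\le\sqrt q$ and the equality condition $T_{-m_1}=\cdots=T_{-m_d}=0$ in one stroke. Your version is the additive-character (eigenvector) expansion of the very same quadratic form $\mathbf{1}_A^{\top}M\mathbf{1}_A$, run through the complement graph and Hoffman's bound with its equality analysis. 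The two are essentially dual bookkeepings of one computation and consume the identical arithmetic input (semi-primitivity forcing all the relevant Gauss sums, equivalently all Gauss periods off $C_0$, to be uniform); what your approach buys is a conceptually transparent reading of the characterization as ``the Fourier mass of $A$ lives in the negative eigenspace,'' while the paper's buys self-containedness and, as the authors note, a proof of the Delsarte-type bound that is independent of spectral machinery. One shared caveat: both arguments need $r$ even for $G(\chi^j)=-\sqrt q$ to hold uniformly (the sign in Stickelberger's formula is $(-1)^{r-1}$ up to the correction term), an hypothesis the proposition's statement omits but which is in force wherever it is applied.
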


The proof of Proposition~\ref{proposition:main-result} is technical and is deferred to Appendix~\ref{sect:proof-of-main-prop}. A similar characterization for maximum cliques in generalized Paley graphs has appeared in \cite{YipG}*{Theorem 1.4}.

Now we present the proof of Theorem~\ref{theorem:main-result} using Proposition~\ref{proposition:main-result} and the novel idea mentioned at the beginning of this section.

\begin{proof}[Proof of Theorem \ref{theorem:main-result}] Let $A$ be a clique of size $\sqrt{q}$ such that $0 \in A$. Let $A_j=A \cap C_j$ for each $0\leq j\leq 2d-1$.  Since $A$ is a clique in $X$ and $0 \in A$, we have $A \setminus \{0\}=\cup_{j=1}^{d} A_{m_{j}}$, and thus
\begin{equation}\label{sumaj}
\sum_{j=1}^{d}|A_{m_j}|=\sqrt{q}-1.  
\end{equation} 

By Proposition~\ref{proposition:main-result}, $S(q,A;c)=0$ for all $c \in D'$ where $D'=C_{-m_{1}}\cup C_{-m_{2}}\cup \cdots \cup C_{-m_{d}}$. Without loss of generality, we may assume that $m_1=0$. In particular, we have $S(q,A;c)=0$ for each $c\in C_0$, implying the following:
\begin{align*}
0&= \sum_{c \in C_0} S(q, A;c)=\sum_{c \in C_0}  \sum_{a\in A} e_p\big(\Tr_{\F_q}(ac)\big)= \sum_{a\in A} \sum_{c \in C_0} e_p\big(\Tr_{\F_q}(ac)\big) \\
&=|C_0|+\sum_{j=1}^{d}\sum_{a\in A_{m_j}} \sum_{c \in C_0} e_p\big(\Tr_{\F_q}(ac)\big).  
\end{align*}

Note that if $a \in A_{m_j}$, then as $c$ runs over $C_0$, $ac$ runs over $C_{m_j}$. Therefore,
\begin{align*}
0&=|C_0|+\sum_{j=1}^{d}|A_{m_j}| \sum_{c \in C_{m_{j}}} e_p\big(\Tr_{\F_q}(c)\big).
\end{align*}

We can apply the similar argument to $C_{-m_{k}}$ for each $k\geq 2$ to obtain:
\begin{align*}
0&= \sum_{c \in C_{-m_{k}}} S(q, A;c)= \sum_{a\in A} \sum_{c \in C_{-m_k}} e_p\big(\Tr_{\F_q}(ac)\big) \\
&=|C_{-m_{k}}|+\sum_{j=1}^{d}|A_{m_{j}}| \sum_{c \in C_{m_j-m_k}} e_p\big(\Tr_{\F_q}(c)\big).  
\end{align*}

Recall that the index $j$ in $C_{j}$ is defined up to modulo $2d$, which allows us to consider $C_{m_j-m_k}$ even when $m_j-m_k<0$. For convenience, we define
$$
\lambda \colonequals \sum_{c\in C_0} e_p\big(\Tr_{\F_q}(c)\big), \quad \text{and} \quad \mu\colonequals \sum_{c\in C_k} e_p\big(\Tr_{\F_q}(c)\big) 
$$
for each $1\leq k \leq 2d-1$. The value of $\mu$ is well-defined in view of Corollary~\ref{subsum}.

Given a fixed integer $k$ satisfying $1\leq k\leq d$, there is a unique value of $j$ such that $m_j-m_k\equiv 0 \pmod{2d}$, namely, $j=k$. Therefore, the equation
$$
0=|C_{-m_{k}}|+\sum_{j=1}^{d}|A_{m_{j}}| \sum_{c \in C_{m_j-m_k}} e_p\big(\Tr_{\F_q}(c)\big) 
$$
becomes
\begin{equation} \label{eq: lambdamu}
0=|C_0|+ \lambda |A_{m_k}| +  \sum_{\substack{1 \leq j \leq d \\ j\neq k}} \mu |A_{m_j}| 
\end{equation}
where we also used the fact that $|C_{-m_k}|=|C_0|$. Note that equation~\eqref{eq: lambdamu} is valid for each $1\leq k\leq d$, and so these $d$ linear equations correspond to the matrix equation $\mathbf{M} \mathbf{x} = \mathbf{b}$ where
$$
\mathbf{M} = \begin{pmatrix}
\lambda & \mu &\mu &\hdots & \mu \\
\mu & \lambda &\mu &\hdots & \mu \\
\mu & \mu &\lambda &\hdots & \mu \\
\vdots & \vdots &\vdots &\ddots & \vdots \\
\mu & \mu &\mu &\hdots & \lambda \\
\end{pmatrix}, \ \ \ \mathbf{x}=\begin{pmatrix}
|A_{m_{1}}| \\
|A_{m_{2}}| \\
|A_{m_{3}}| \\
\vdots \\
|A_{m_{d}}|
\end{pmatrix} \text{ and } \ \mathbf{b} = - \begin{pmatrix} |C_0| \\ |C_0| \\ |C_0| \\ \vdots \\ |C_0| \end{pmatrix}
$$
Note that $\mathbf{M}=(\lambda-\mu)I_{d} + \mu J_{d}$, where $I_{d}$ is the $d\times d$ identity matrix and $J_{d}$ is the $d\times d$ all $1$'s matrix. It is straightforward to see that $\mathbf{M}$ has eigenvalue $\lambda-\mu$ with multiplicity $d-1$, and eigenvalue $\lambda+(d-1)\mu$ with multiplicity $1$. By Corollary~\ref{subsum}, we know that $\lambda\neq \mu$ and 
$$
\lambda+(d-1)\mu = -\frac{(2d-1)\sqrt{q}+1}{2d}+ (d-1)\cdot \frac{\sqrt{q}-1}{2d} = -\frac{\sqrt{q}+1}{2}.
$$
Therefore, $\mathbf{M}$ is invertible. Thus, $\mathbf{M}\mathbf{x}=\mathbf{b}$ has a unique solution given by
$$
\mathbf{x} = \begin{pmatrix} 
\gamma \\ 
\gamma \\
\vdots \\
\gamma
\end{pmatrix}
$$
where $\gamma = -|C_0|/(\lambda+(d-1) \mu)$. We deduce that 
$$
|A_{m_j}| = \gamma=-\frac{|C_0|}{\lambda+(d-1) \mu}=\frac{2|C_0|}{\sqrt{q}+1} = \frac{2(q-1)}{2d(\sqrt{q}+1)} = \frac{\sqrt{q}-1}{d} 
$$
for each $1\leq j\leq d$.  It is easy to verify that $|A_{m_1}|,|A_{m_2}|, \ldots, |A_{m_d}|$ satisfy the equation \eqref{sumaj}. \end{proof}

As a quick application, Theorem~\ref{theorem:main-result} immediately implies the following nontrivial upper bound on the clique number of a particular type of Cayley graphs.

\begin{cor}\label{cor:main-result}
Let $X=PP(q, 2d, I)$ be a semi-primitive pseudo-Paley graph with $q=p^{2rt}$ where $r$ is even. Pick subsets $D_{m_j}\subset C_{m_j}$ for each $1\leq j\leq d$ such that $|D_{m_k}|<(\sqrt{q}-1)/d$ holds for some $k$. Consider the Cayley graph $X'=\operatorname{Cay}\left(\F_q^+, \bigcup_{j=1}^{d} D_{m_j}\right)$ , which is a subgraph of $X$. Then $\omega(X')\leq \sqrt{q}-1$. 
\end{cor}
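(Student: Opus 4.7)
The plan is a short contradiction argument whose only genuine input is Theorem~\ref{theorem:main-result}. First I would note that the connection set of $X'$ is contained in that of $X$, so $X'$ is a spanning subgraph of $X$ and any clique of $X'$ remains a clique of $X$. Combined with Theorem~\ref{theorem:main-result}, this already yields $\omega(X') \leq \omega(X) \leq \sqrt{q}$, so the task reduces to excluding cliques of size exactly $\sqrt{q}$ in $X'$.

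Next I would suppose for contradiction that $A \subset \F_q$ is such a clique. Since $X'$ is a Cayley graph on $\F_q^+$, I may translate $A$ to assume $0 \in A$, without affecting cliqueness in either $X$ or $X'$. Viewed as a maximum clique of $X$ containing the origin, the set $A$ then falls under the hypothesis of Theorem~\ref{theorem:main-result}, which forces the sharp equality
\[
|A \cap C_{m_j}| = \frac{\sqrt{q}-1}{d} \qquad \text{for every } 1 \leq j \leq d.
\]

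The contradiction will come from comparing this with the constraint imposed by the smaller connection set $\bigcup_{j=1}^{d} D_{m_j}$ of $X'$. Since $0 \in A$, every nonzero element $a \in A$ must lie in this smaller connection set. The pairwise disjointness of the cyclotomic classes $C_{m_1}, \ldots, C_{m_d}$, together with the inclusion $D_{m_j} \subset C_{m_j}$, will then force $A \cap C_{m_j} \subset D_{m_j}$ for every $j$. Taking $j = k$ yields
\[
\frac{\sqrt{q}-1}{d} = |A \cap C_{m_k}| \leq |D_{m_k}| < \frac{\sqrt{q}-1}{d},
\]
contradicting the hypothesis, and the desired bound $\omega(X') \leq \sqrt{q}-1$ follows.

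There is essentially no hard step here once Theorem~\ref{theorem:main-result} is in hand: the main conceptual content is already captured by the equal-contribution conclusion, namely that any putative $\sqrt{q}$-clique containing the origin must draw exactly $(\sqrt{q}-1)/d$ elements from each $C_{m_j}$. Shrinking even a single $D_{m_k}$ below that threshold automatically destroys all such cliques, so the corollary is really just a repackaging of Theorem~\ref{theorem:main-result} in a form adapted to Cayley subgraphs.
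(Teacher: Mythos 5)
Your proof is correct and follows essentially the same route as the paper: reduce to a maximum clique of $X$ containing $0$ via $\omega(X')\leq\omega(X)\leq\sqrt{q}$, invoke the equal-contribution conclusion of Theorem~\ref{theorem:main-result}, and observe that $A\cap C_{m_k}\subset D_{m_k}$ forces $|D_{m_k}|\geq(\sqrt{q}-1)/d$, contradicting the hypothesis. No gaps.
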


\begin{proof}
By Theorem~\ref{theorem:main-result}, $\omega(X') \leq \omega(X) \leq \sqrt{q}$. So, if $\omega(X')= \sqrt{q}$, then $\omega(X)=\sqrt{q}$ as well. Now, take a maximum clique $A$ in $X'$ containing $0$, so that $A$ is also a maximum clique in $X$. By Theorem~\ref{theorem:main-result}, $|D_{m_j}|\geq |A \cap C_{m_j}|=\frac{\sqrt{q}-1}{d}$ for each $1\leq j\leq d$, contradicting the hypothesis. 
\end{proof}

We continue with another application of Theorem~\ref{theorem:main-result} on the number of maximum cliques. Recall that in the Paley graph $P_q$, there is a unique maximum clique that contains $\{0, 1\}$, namely the subfield $\F_{\sqrt{q}}$. In contrast, Mullin~\cite{NM}*{Lemma 3.3.6} shows that if $r$ is even and $q=p^{2r}$ for some prime $p \equiv 3 \pmod 4$ such that $\omega(P_q^{\ast})=\sqrt{q}$, then there are at least 2 maximum cliques in $P_q^{\ast}$ that contain $\{0,1\}$. Next, we present the proof of Theorem~\ref{thm:at-least-two-max-cliques}, which generalizes Mullin's result.

\begin{proof}[Proof of Theorem~\ref{thm:at-least-two-max-cliques}]
Suppose $\omega(X)=\sqrt{q}$, and that there is a unique maximum clique $A$ in $X$ that contains $\{0,1\}$. We will show that $A=A(q,2d,I)$ defined in equation~\eqref{naive_construction},
which contradicts Lemma~\ref{lem:naive} and thus establishes the theorem. 

Let $A_j=A \cap C_j$ for each $0 \leq j \leq 2d-1$. By Theorem~\ref{theorem:main-result}, $|A_{m_j}|=\frac{\sqrt{q}-1}{d}$ for each $1 \leq j \leq d$. For each $y \in A_0$, the map $x \mapsto y^{-1}x$ induces a graph automorphism of $X$, so $y^{-1}A$ is a maximum clique; moreover, $1=y^{-1}y \in y^{-1}A$, and so $y^{-1}A=A$ by the uniqueness of the maximum clique containing $\{0,1\}$. In particular, for each $x, y \in A_0$, we have $xy^{-1} \in y^{-1}A_0=A_0$. Hence $A_0$ is a subgroup of $\F_q^{\ast}$. Since $|A_0|=\frac{\sqrt{q}-1}{d}$, we conclude that $A_0=\langle g^{d(\sqrt{q}+1)}\rangle$. For each $1 \leq j \leq d$, we can apply the same argument to $g^{-m_j}A_{m_j}$ to conclude that $A_{m_j}=g^{m_j} \langle g^{d(\sqrt{q}+1)}\rangle$. Therefore,
$$
A = \{0\} \cup \bigcup_{j=1}^{d} A_{m_j} = \{0\} \cup \bigcup_{j=1}^{d} g^{m_j} \langle g^{d(\sqrt{q}+1)}\rangle =A(q,2d,I)$$
as desired. \end{proof}

\begin{cor}\label{cor:non-isomorphism-semi-primitive}
Let $X=PP(q, 2d, I)$ be a semi-primitive pseudo-Paley graph with $q=p^{2rt}$ where $r$ is even. Then $X$ is isomorphic to the Paley graph $P_q$ if and only if $I= \{0,2,\ldots, 2d-2\}$ or $I=\{1,3, \ldots, 2d-1\}$.
\end{cor}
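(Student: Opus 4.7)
The plan is to prove both implications. The ``if'' direction is immediate from Example~\ref{ex:pp}: if $I = \{0, 2, \ldots, 2d-2\}$, then part~(\ref{ex:pp-3}) identifies $X$ with $P_q$, and if $I = \{1, 3, \ldots, 2d-1\}$, then part~(\ref{ex:pp-4}) identifies $X$ with the complement of $P_q$, which is isomorphic to $P_q$ via the well-known self-complementarity of Paley graphs.

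For the nontrivial ``only if'' direction, suppose $X \cong P_q$. First I would apply the Cayley-graph isomorphism $x \mapsto g^{-m_1}x$ to reduce to the case $0 \in I$: this sends $PP(q, 2d, I)$ to $PP(q, 2d, I - m_1)$, and the shifted set contains $0$. Under this reduction, it will suffice to show the shifted set equals $\{0, 2, \ldots, 2d-2\}$; undoing the translation then recovers either $\{0, 2, \ldots, 2d-2\}$ (when $m_1$ is even) or $\{1, 3, \ldots, 2d-1\}$ (when $m_1$ is odd), since these are the only two translates of $\{0, 2, \ldots, 2d-2\}$ modulo $2d$.

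The crux is to combine Theorem~\ref{thm:at-least-two-max-cliques} with a uniqueness property of $P_q$. By Theorem~\ref{EKRPaley}, the pair $\{0, 1\}$ lies in a unique maximum clique of $P_q$ (namely $\F_{\sqrt{q}}$); since Paley graphs are edge-transitive, every edge of $P_q$ is contained in exactly one maximum clique. Since $X \cong P_q$ and $q = p^{2rt}$ is a square, we have $\omega(X) = \sqrt{q}$, and any graph isomorphism $\phi \colon X \to P_q$ induces a bijection between the maximum cliques of $X$ containing $\{0, 1\}$ and those of $P_q$ containing $\{\phi(0), \phi(1)\}$. By the edge-transitivity conclusion the latter has size $1$, so $\{0, 1\}$ lies in a unique maximum clique of $X$ as well (note that $\{0,1\}$ is indeed an edge since $0 \in I$ implies $1 \in D$). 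On the other hand, Theorem~\ref{thm:at-least-two-max-cliques} exhibits at least two maximum cliques through $\{0, 1\}$ whenever $I \neq \{0, 2, \ldots, 2d-2\}$. This contradiction forces $I = \{0, 2, \ldots, 2d-2\}$ in the reduced setting, and the WLOG reversal described above completes the proof. The main conceptual step is the transfer of the ``unique maximum clique per edge'' property across the isomorphism, which is routine given edge-transitivity of $P_q$; no serious technical obstacle is anticipated since the heavy lifting is already carried by Theorem~\ref{thm:at-least-two-max-cliques} and Blokhuis' theorem.
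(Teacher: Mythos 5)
Your proof is correct and takes essentially the same approach as the paper: both directions rely on Example~\ref{ex:pp} for the ``if'' part, and the ``only if'' part contrasts the uniqueness of the maximum clique through an edge of $P_q$ (Theorem~\ref{EKRPaley} plus edge-transitivity) with the multiplicity guaranteed by Theorem~\ref{thm:at-least-two-max-cliques}. The paper implements the transfer across the isomorphism by explicitly composing with the Paley automorphism $x \mapsto (x-f(0))/(f(1)-f(0))$, which is exactly the edge-transitivity you invoke, so the arguments coincide.
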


\begin{proof}
If $I=\{0, 2, \ldots, 2d-2\}$ or $I=\{1, 3, \ldots, 2d-1\}$, then $X\cong P_q$ by Example~\ref{ex:pp}.

Conversely, suppose that $I$ is different $\{0,2,\ldots, 2d-2\}$ and $\{1,3, \ldots, 2d-1\}$. We may assume that $0\in I$. Assume, to the contrary, that $X$ is isomorphic to the Paley graph $P_q$. Then $\omega(X)=\sqrt{q}$.  Let $f\colon X\to P_q$ be a graph isomorphism. Since $0 \in I$ and $1 \in C_0$, it follows that $0$ and $1$ are adjacent in $X$, and $f(0)$ and $f(1)$ are adjacent in $P_q$. After composing $f$ with the automorphism $x \mapsto (x-f(0))/(f(1)-f(0))$ of the Paley graph $P_q$, we obtain an isomorphism $f'\colon X\to P_q$ fixing $0$ and $1$. This implies that the number of maximum cliques containing $\{0, 1\}$ must be the same in both graphs; however, there is a unique such maximum clique in $P_q$ by Theorem~\ref{EKRPaley}, while there are at least two such cliques in $X$ by Theorem~\ref{thm:at-least-two-max-cliques}. This contradiction shows that $X$ is not isomorphic to $P_q$. \end{proof}

The corollary above shows that almost all the graphs of the form $PP(q, 2d, I)$ in the semi-primitive case are not isomorphic to the Paley graph. In Section~\ref{sect:isomorphism} below, we will prove a stronger result without the semi-primitive assumption using different tools. We included the weaker statement above because its proof is self-contained. 

\section{Non-isomorphism}\label{sect:isomorphism}

The goal of this section is to show that most graphs of the form $PP(q, 2d, I)$ are not isomorphic to the Paley graph $P_q$ or the Peisert graph $P_q^{\ast}$. To achieve this result, we will borrow the deep results obtained by Peisert, Foulser, Kallaher, and others.

Peisert~\cite{WP2} proved that $P_{q}$ and $P^{\ast}_{q}$ are non-isomorphic except when $q=3^2$ by studying their respective automorphism groups. Another approach to distinguish these graphs is to study their $p$-ranks \cite{BvE92}; this was carried out successfully in \cite{WQWX}*{Proposition 3.6}. We will use ideas in Peisert's paper ~\cite{WP2} to establish our key criterion in Proposition~\ref{prop:isomorphism}.

Throughout this section, we assume that $q=p^n$ for some positive integer $n$. For any field $F$ and a positive integer $m$, we can consider the \emph{general semilinear group} $\operatorname{\Gamma L}_m(F)$ \cite{dLF11}*{Section 3.6}. We are primarily interested in the case when $F=\F_q$ and $m=1$. In this case, we often write $\operatorname{\Gamma L}_1(q)$ instead of $\operatorname{\Gamma L}_1(\F_q)$.

\begin{defn}[general semilinear group] Let $\operatorname{\Gamma L}_1(q) \colonequals \F_q^{\ast}\rtimes \operatorname{Gal}(\F_q / \F_p)$ be the semi-direct product obtained via the natural action of $\operatorname{Gal}(\F_q / \F_p)$ on $\F_q^{\ast}$. This group is called the \emph{general semilinear group} of degree $1$ over the finite field $\F_q$.
\end{defn}

Let $g$ be a fixed primitive root of $\F_q$, and let $\zeta: \F_q\to \F_q$ be the map $x\mapsto gx$. Since $\zeta^k: \F_q \to \F_q$ is given by $x\mapsto g^k x$, we see that $\langle \zeta \rangle \cong \F_q^{\ast}$. Thus, the elements of $\operatorname{\Gamma L}_1(q)$ can be viewed as functions $f\colon \F_q\to \F_q$ satisfying $f(x+y)=f(x)+f(y)$ for all $x,y\in \F_q$ and $f(cx)=c^{p^j} x$ for some $j$ and for all $x, c\in \F_q$. Since the Galois group $\operatorname{Gal}(\F_q / \F_p)=\langle \alpha \rangle$ is generated by the Frobenius map $\alpha: x\mapsto x^p$, we can also write $\operatorname{\Gamma L}_1(q)=\F_q^{\ast}\rtimes \langle \alpha \rangle = \langle \zeta \rangle \rtimes \langle \alpha \rangle$. Thus, $\operatorname{\Gamma L}_1(q) = \langle \zeta, \alpha \rangle$ and in particular, $\operatorname{\Gamma L}_1(q)$ is a permutation group. We remark that other authors (for example Peisert~\cite{WP2}, and Foulser and Kallaher~\cite{FK78}) use $\omega$ to denote the map $x\mapsto g x$. We use $\zeta$ for this purpose as $\omega$ is reserved for the clique number throughout the paper. 

Peisert \cite{WP2} classified all self-complementary symmetric graphs. Recall that a graph is self-complementary if it is isomorphic to its complement, and a graph is symmetric if its automorphism group acts transitively on the ordered pairs of adjacent vertices.

\begin{thm}[\cite{WP2}] \label{SCSgraph}
A graph $G$ is self-complementary and symmetric if and only if $|G|=p^{n}$ for some prime $p$ with $p^{n} \equiv 1\pmod{4}$, and $G$ is isomorphic to a Paley graph, a Peisert graph, or the exceptional graph $G\left(23^{2}\right)$.
\end{thm}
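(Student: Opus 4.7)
The plan is to handle the two directions separately. The easy direction is to verify that each graph on the list is indeed both self-complementary and symmetric: for the Paley graph $P_q$, the map $x \mapsto gx$ (where $g$ is a primitive root) is a complementing isomorphism because it exchanges the squares and non-squares, and $\operatorname{Aut}(P_q)$ contains the affine group generated by $x \mapsto ax + b$ with $a \in (\F_q^\ast)^2$, which is transitive on directed edges; a similar check works for Peisert graphs using $x \mapsto g^j x$ with $j \equiv 2 \pmod 4$, and for $G(23^2)$ one can inspect the explicit group of order $23^2 \cdot 2 \cdot 11 \cdot 7$ (or appeal to the original construction). So the whole content of the theorem lies in the converse.

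For the converse, the central observation is that ``self-complementary plus symmetric'' forces an unusually transitive group action. If $G$ is symmetric, then $\operatorname{Aut}(G)$ acts transitively on the ordered pairs of adjacent vertices, and if $\sigma$ is an isomorphism $G \to \overline{G}$, then $\sigma \operatorname{Aut}(G)$ acts transitively on ordered pairs of non-adjacent vertices. Hence $H \colonequals \operatorname{Aut}(G) \cup \sigma \operatorname{Aut}(G)$ is a group acting transitively on all ordered pairs of distinct vertices, i.e.\ $H$ is a $2$-transitive permutation group on $V(G)$, with $\operatorname{Aut}(G)$ of index $2$ in $H$. Moreover, $|V(G)| \equiv 1 \pmod 4$ is forced by the standard parity count for self-complementary graphs, and together with $2$-transitivity one quickly deduces $|V(G)| = p^n$ is a prime power. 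I would then invoke the classification of finite $2$-transitive groups (a consequence of CFSG) to list the possibilities for $H$.

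The $2$-transitive groups come in two families: affine groups with socle $(\F_p^n,+)$, and almost simple groups. The affine case is the main one: here $H \leq \operatorname{AGL}(n,p)$, the stabilizer $H_0$ of $0$ is a subgroup of $\operatorname{GL}(n,p)$ of index $2$ in $\operatorname{Aut}(G)_0$, and $H_0$ acts transitively on $\F_q^\ast$ while $\operatorname{Aut}(G)_0$ has exactly two orbits on $\F_q^\ast$ (the edges and non-edges out of $0$) of equal size $(q-1)/2$. Such subgroups of $\operatorname{GL}(n,p)$ with an index-$2$ subgroup acting transitively on nonzero vectors were classified by Foulser and Kallaher (cited in the paper), and up to conjugacy almost all of them lie inside $\Gamma L_1(q)$; the ones in $\Gamma L_1(q)$ give precisely the Paley graphs (stabilizer generated by multiplication by squares together with a Frobenius) and the Peisert graphs (stabilizer generated by $x \mapsto g^j x$ for $j \equiv 0,1 \pmod 4$ together with a suitable Frobenius), while the finitely many exceptional Foulser--Kallaher classes must be examined by hand and yield only $G(23^2)$. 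The almost-simple case of $2$-transitive groups then has to be ruled out: none of those actions produce a self-complementary symmetric graph on a prime-power number of vertices (a short case check over the known list: $A_n$, $\operatorname{PSL}$, Mathieu groups, etc.).

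The main obstacle is the case analysis in the affine case after invoking Foulser--Kallaher: one must argue that, outside $\Gamma L_1(q)$, the only admissible stabilizer produces exactly $G(23^2)$, and this requires carefully checking compatibility between the order-$2$ coset responsible for the complementing isomorphism and the structure of the stabilizer. Ruling out the almost simple $2$-transitive examples is routine but tedious. Once the allowed stabilizers are pinned down, identifying the resulting graph with a Paley graph, Peisert graph, or $G(23^2)$ is essentially a matter of matching the orbit of the non-identity coset on $\F_q^\ast$ with the connection sets $(\F_q^\ast)^2$, $\{g^j : j \equiv 0,1 \pmod 4\}$, or the exceptional connection set, respectively.
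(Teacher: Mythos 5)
This theorem is not proved in the paper at all: it is imported verbatim from Peisert's article \cite{WP2} (the paper only uses its consequence, Proposition~4.2 of \cite{WP2}, restated here as Proposition~\ref{prop:automorphism-group-classification}). So there is no in-paper proof to compare against; what you have written is a reconstruction of the strategy of the original source, and as such it is faithful: form the index-$2$ overgroup $H=\operatorname{Aut}(G)\cup\sigma\operatorname{Aut}(G)$, observe it is $2$-transitive, reduce via the classification of $2$-transitive groups to the affine case, and analyze the point stabilizer (transitive on $\F_q^\ast$ with an index-$2$ subgroup having two equal orbits) using the Foulser--Kallaher machinery, with $\Gamma L_1(q)$ yielding Paley and Peisert graphs and the exceptional transitive linear groups yielding only $G(23^2)$.

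Two caveats on your sketch. First, the order of deductions is slightly off: you cannot conclude that $|V(G)|=p^n$ ``quickly'' from $2$-transitivity alone, since $2$-transitive groups act on many non-prime-power degrees ($\operatorname{PSL}_2(q)$ on $q+1$ points, symmetric groups, etc.); the prime-power conclusion only emerges after the almost simple $2$-transitive actions have been eliminated, so that step cannot be deferred to the end as a ``routine check'' independent of the degree claim. (The congruence $|V(G)|\equiv 1\pmod 4$ also needs regularity of degree $(|V(G)|-1)/2$, not just the edge-count parity, to exclude $|V(G)|\equiv 0\pmod 4$.) Second, and more importantly, essentially all of the mathematical content of the theorem lives in the case analysis you explicitly defer --- running through Hering's list of transitive linear groups, checking which admit an index-$2$ subgroup with two equal orbits on $\F_q^\ast$, and identifying the resulting Cayley graphs --- so what you have is an accurate roadmap of Peisert's proof rather than a proof. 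For the purposes of this paper that is immaterial, since the result is cited, not reproved.
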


The key to proving Theorem~\ref{SCSgraph} is the classification of the automorphism groups of all possible self-complementary symmetric graphs \cite{WP2}*{Theorem 4.2}. For our purposes, we state a slight variation of this result. 

\begin{prop}\label{prop:automorphism-group-classification}
Let $X$ be a self-complementary symmetric Cayley graph defined on $\F_{p^n}$ with $\mathscr{A}=\operatorname{Aut}(X)$, and $\mathscr{A}_0$ be the subgroup of $\mathscr{A}$ which fixes $0$. If $p^n \notin \{9, 49, 81,529\}$, then $\mathscr{A}_0$ is a subgroup of $\operatorname{\Gamma L}_{1}(p^n)$ and $\mathscr{A} \cong \F_{p^n}\times \mathscr{A}_0 $.
\end{prop}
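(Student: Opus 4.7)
The plan is to reduce the proposition to Peisert's classification of automorphism groups of self-complementary symmetric graphs, namely \cite{WP2}*{Theorem 4.2}, and extract the two pieces of information we need. Since $X$ is a Cayley graph on $\F_{p^n}^+$, the translation action gives a regular normal subgroup of $\mathscr{A}$ isomorphic to $\F_{p^n}^+$, yielding the semidirect decomposition $\mathscr{A} \cong \F_{p^n}^+ \rtimes \mathscr{A}_0$ (this is what the proposition writes as $\F_{p^n} \times \mathscr{A}_0$). So the content of the proposition is really the statement $\mathscr{A}_0 \leq \Gamma L_1(p^n)$.

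Next, I would show that $\mathscr{A}$ acts on $\F_{p^n}$ as a primitive rank-$3$ permutation group. Symmetry of $X$ means $\mathscr{A}$ is arc-transitive, so $\mathscr{A}_0$ acts transitively on the neighborhood of $0$; combined with self-complementarity, conjugation by the complementing isomorphism shows that $\mathscr{A}_0$ also acts transitively on the set of non-neighbors of $0$. Hence $\mathscr{A}_0$ has exactly three orbits on $\F_{p^n}$, i.e., $\mathscr{A}$ has rank $3$. Primitivity follows because any nontrivial block of imprimitivity would have to be a proper $\mathscr{A}_0$-invariant subgroup of $\F_{p^n}^+$, and no such subgroup can be compatible with the adjacency pattern of a self-complementary symmetric graph of prime-power order.

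With primitivity established, I would invoke the Foulser--Kallaher classification of solvable primitive rank-$3$ permutation groups of prime-power degree \cite{FK78} together with its extension by Liebeck for the insolvable case. The classification says that either $\mathscr{A}_0$ embeds in $\Gamma L_1(p^n)$, or the pair $(\mathscr{A}, p^n)$ lies in a short finite list of exceptions, in which $\mathscr{A}_0$ sits inside a larger group such as a semilinear group of higher dimension or one of the sporadic $(2\text{-})$transitive affine groups. A direct match of this list against the extra conditions (self-complementary plus symmetric Cayley on $\F_{p^n}^+$, with $p^n \equiv 1 \pmod 4$) leaves exactly the four exceptional orders $p^n \in \{9, 49, 81, 529\}$; outside these, $\mathscr{A}_0 \leq \Gamma L_1(p^n)$, as required.

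The main obstacle is the case analysis that rules out the remaining entries of the Foulser--Kallaher list other than $9, 49, 81, 529$. This is where one must combine the symmetry hypothesis with the self-complementarity to exclude those sporadic possibilities where a rank-$3$ primitive action exists but no compatible self-complementary symmetric Cayley structure does. Fortunately, this bookkeeping is exactly what Peisert performs in the proof of his Theorem~4.2, so I would cite his verification rather than reproduce it; the present statement is then an immediate specialization, since our hypotheses (Cayley on $\F_{p^n}^+$, symmetric, self-complementary) match the setting of his classification.
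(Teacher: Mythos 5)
Your proposal is correct and follows essentially the same route as the paper: both reduce the statement to the case analysis in the proof of Peisert's Theorem~4.2, where Case~1 gives $\mathscr{A}_0 \leq \Gamma L_1(p^n)$ and the remaining cases occur only for $p^n \in \{9,49,81,529\}$. The extra scaffolding you supply (rank~3, primitivity, the Foulser--Kallaher/Liebeck classifications) is precisely the content of Peisert's argument, while the paper simply cites his four cases directly; note only that the decomposition $\mathscr{A}=T\mathscr{A}_0$ needs no normality of the translation subgroup, which you assert slightly prematurely.
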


\begin{proof}
Each automorphism $f\in \mathscr{A}$ can be decomposed as $f_2\circ f_1$ where $f_1\in \mathscr{A}_0$ and $f_2$ is a translation by an element of $\F_{p^n}$.  This explains the identity $\mathscr{A} \cong \F_{p^n}\times \mathscr{A}_0 $.

The proof in \cite{WP2}*{Theorem 4.2} classifies the automorphism group of any such graph $X$. There are four different cases in Peisert's proof. Case 1 corresponds exactly to $\mathscr{A}_0 \leq \operatorname{\Gamma L}_1(p^n)$. Next, Case 2 corresponds to $p^n=9$, while Cases 3 and 4 correspond to $p^n\in \{49, 81, 529\}$. By the hypothesis, we must be in Case 1. 
\end{proof}

The following lemma shows that a subgroup of $\operatorname{\Gamma L}_1(p^n)$ has a simple structure. The sentence starting with ``Moreover" is not literally part of \cite{FK78}*{Lemma 2.1}, but appears on page 117 of \cite{FK78} and also on page 220 of \cite{WP2}.

\begin{lem}[\cite{FK78}*{Lemma 2.1}]\label{lem:size-of-A0} 
If $\mathscr{A}_{0}$ is a subgroup of $\operatorname{\Gamma L}_1(p^n)$, then  $\mathscr{A}_{0}=\left\langle\zeta^{d}, \zeta^{e} \alpha^{s}\right\rangle$ for some integers $d, e, s$ satisfying the following conditions:
$$
s\mid n, d \mid \left(p^{n}-1\right), e\left(\frac{p^{n}-1}{p^{s}-1}\right) \equiv 0 \pmod{d}.
$$
Such a representation $\mathscr{A}_{0}=\left\langle\zeta^{d}, \zeta^{e} \alpha^{s}\right\rangle$ is unique when $d, s>0$ and $0 \leq e<d$ and it is called the \emph{standard form}. Moreover, if $\mathscr{A}_{0}=\left\langle\zeta^{d}, \zeta^{e} \alpha^{s}\right\rangle$ is in standard form, then $|\mathscr{A}_0| =n\left(p^{n}-1\right) / s d.$
\end{lem}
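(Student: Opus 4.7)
The plan is to exploit the semidirect product structure $\Gamma L_1(p^n) = \langle \zeta \rangle \rtimes \langle \alpha \rangle$ together with the commutation rule $\alpha \zeta \alpha^{-1} = \zeta^p$, which forces every element of $\Gamma L_1(p^n)$ to have a unique normal form $\zeta^i \alpha^j$ with $0 \le i < p^n - 1$ and $0 \le j < n$. First I would isolate the ``linear part'' and the ``Galois part'' of $\mathscr{A}_0$. Setting $H \colonequals \mathscr{A}_0 \cap \langle \zeta \rangle$, which is a subgroup of the cyclic group $\langle \zeta \rangle$ of order $p^n - 1$, there is a unique divisor $d$ of $p^n - 1$ with $H = \langle \zeta^d \rangle$. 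Next, the projection $\pi\colon \Gamma L_1(p^n) \to \langle \alpha \rangle$ with kernel $\langle \zeta \rangle$ carries $\mathscr{A}_0$ onto a subgroup of $\langle \alpha \rangle$, so $\pi(\mathscr{A}_0) = \langle \alpha^s \rangle$ for some $s \mid n$. I would then choose any preimage $\zeta^e \alpha^s \in \mathscr{A}_0$ of $\alpha^s$ and reduce $e$ modulo $d$ using $\langle \zeta^d \rangle \subset \mathscr{A}_0$, so that $0 \le e < d$.

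Next I would show $\mathscr{A}_0 = \langle \zeta^d, \zeta^e \alpha^s \rangle$ by a standard coset argument: any $g = \zeta^i \alpha^j \in \mathscr{A}_0$ satisfies $\pi(g) = \alpha^j \in \langle \alpha^s \rangle$, hence $s \mid j$, and then $g \cdot (\zeta^e \alpha^s)^{-j/s}$ lies in $\ker \pi \cap \mathscr{A}_0 = \langle \zeta^d \rangle$. The divisibility condition on $e$ is the key computation: iterating the twisted commutation $\alpha^s \zeta^e = \zeta^{e p^s} \alpha^s$ and using $\alpha^n = 1$ yields
$$
(\zeta^e \alpha^s)^{n/s} = \zeta^{e(1 + p^s + p^{2s} + \cdots + p^{n-s})} \alpha^n = \zeta^{e(p^n-1)/(p^s-1)}.
$$
Since this element must lie in $\mathscr{A}_0 \cap \langle \zeta \rangle = \langle \zeta^d \rangle$, we must have $d \mid e (p^n-1)/(p^s-1)$, which is precisely the asserted condition.

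Uniqueness in the normalization $d, s > 0$ and $0 \le e < d$ follows at once: $d$ is recovered intrinsically from $H$, the exponent $s$ from the index of $\pi(\mathscr{A}_0)$ in $\langle \alpha \rangle$, and $e$ is then pinned down modulo $d$ because any two preimages of $\alpha^s$ in $\mathscr{A}_0$ differ by an element of $\langle \zeta^d \rangle$, with $0 \le e < d$ selecting a unique representative. Finally, the order formula drops out of the short exact sequence $1 \to H \to \mathscr{A}_0 \to \pi(\mathscr{A}_0) \to 1$, giving $|\mathscr{A}_0| = |H| \cdot |\pi(\mathscr{A}_0)| = \frac{p^n - 1}{d} \cdot \frac{n}{s} = \frac{n(p^n-1)}{sd}$. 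The only step requiring genuine care is the geometric-series collapse used for $(\zeta^e \alpha^s)^{n/s}$; the remainder is elementary metacyclic bookkeeping, so I expect no serious obstacle once the commutation rule is firmly in hand.
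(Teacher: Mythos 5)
Your proof is correct. Note that the paper itself offers no proof of this lemma; it is quoted verbatim from Foulser--Kallaher \cite{FK78}*{Lemma 2.1} (with the order formula imported from elsewhere in that paper and from Peisert's). Your argument is the standard one for subgroups of a metacyclic group presented as $\langle\zeta\rangle\rtimes\langle\alpha\rangle$ with $\alpha\zeta\alpha^{-1}=\zeta^p$: identifying $d$ from $\mathscr{A}_0\cap\langle\zeta\rangle$, $s$ from the image under the projection to $\langle\alpha\rangle$, and $e$ from a normalized coset representative; the geometric-series collapse $(\zeta^e\alpha^s)^{n/s}=\zeta^{e(p^n-1)/(p^s-1)}$ landing in $\langle\zeta^d\rangle$ is exactly the source of the divisibility condition, and the order formula follows from the short exact sequence as you say. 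All steps check out, so your write-up would serve as a self-contained substitute for the citation.
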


The automorphism group of Paley graphs and Peisert graphs were first determined by Carlitz \cite{Car60} and Peisert \cite{WP2}, respectively.

\begin{thm}[\cite{Car60}] \label{autPaley}
Let $q=p^n$. Then $\operatorname{Aut}(P_q) \cong \F_{p^n} \times \left\langle\zeta^{2}, \alpha\right\rangle$ and $|\operatorname{Aut}(P_q)|=n p^{n}\left(p^{n}-1\right)/2$.
\end{thm}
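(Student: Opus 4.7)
The plan is to prove both inclusions in the claimed isomorphism. For the containment $\F_{p^n}\times \langle\zeta^2,\alpha\rangle \subseteq \operatorname{Aut}(P_q)$, every translation $x\mapsto x+t$ is automatically a Cayley graph automorphism, furnishing the $\F_{p^n}$ factor. The map $\zeta^2: x\mapsto g^2 x$ is multiplication by a square, hence permutes the connection set $(\F_q^*)^2$; the Frobenius $\alpha: x\mapsto x^p$ sends squares to squares since $(x^2)^p=(x^p)^2$; both fix $0$ and thus lie in the stabilizer $\mathscr{A}_0$ of $0$ in $\operatorname{Aut}(P_q)$.

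For the reverse containment, write $\mathscr{A}=\operatorname{Aut}(P_q)$ and decompose each $f\in\mathscr{A}$ uniquely as the translation by $f(0)$ followed by an element of $\mathscr{A}_0$, reducing the problem to showing $\mathscr{A}_0=\langle \zeta^2,\alpha\rangle$. The Paley graph is self-complementary (multiplication by any non-square realizes an isomorphism to its complement) and symmetric (translations give vertex-transitivity, while $\zeta^2$ cycles through the neighbors $(\F_q^*)^2$ of $0$, so the stabilizer of $0$ acts transitively on its neighbors). Assuming $p^n\notin\{9,49,81,529\}$, Proposition~\ref{prop:automorphism-group-classification} then gives $\mathscr{A}_0\leq \Gamma L_1(p^n)$, and Lemma~\ref{lem:size-of-A0} provides a unique standard form $\mathscr{A}_0=\langle \zeta^d,\zeta^e\alpha^s\rangle$.

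The core step is to pin down $d$, $e$, and $s$. A map $x\mapsto g^k x^{p^j}$ in $\Gamma L_1(p^n)$ preserves $(\F_q^*)^2$ if and only if $g^k$ is itself a square (since $x\mapsto x^{p^j}$ is a field automorphism and already preserves squares), equivalently $k$ is even. Therefore $\mathscr{A}_0$ consists precisely of the maps $x\mapsto g^{2m}x^{p^j}$, which rewritten in standard form is $\langle \zeta^2,\alpha\rangle$ with $d=2$, $e=0$, $s=1$. The order formula in Lemma~\ref{lem:size-of-A0} then yields $|\mathscr{A}_0|=n(p^n-1)/(sd)=n(p^n-1)/2$, whence $|\mathscr{A}|=np^n(p^n-1)/2$, matching the claim.

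The main obstacle is handling the four exceptional orders $p^n\in\{9,49,81,529\}$, where Proposition~\ref{prop:automorphism-group-classification} does not apply a priori and $\mathscr{A}_0$ could in principle contain elements outside $\Gamma L_1(p^n)$. Each case must be settled directly: for $P_9$, the identification with the rook graph $K_3\square K_3$ gives $|\operatorname{Aut}(P_9)|=|S_3\wr S_2|=72=np^n(p^n-1)/2$, confirming the formula; the remaining cases $p^n\in\{49,81,529\}$ can be handled either by computer algebra or by revisiting the case analysis underlying Proposition~\ref{prop:automorphism-group-classification} to verify that the Paley graph at these orders is never one of the ``exceptional'' self-complementary symmetric graphs with extra automorphisms.
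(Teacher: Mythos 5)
This is a result the paper does not prove at all: Theorem~\ref{autPaley} is quoted from Carlitz's 1960 paper, whose original argument is elementary and self-contained (it characterizes directly the permutations of $\F_q$ that preserve the quadratic-residue relation, without any classification machinery). Your route is genuinely different: you bootstrap off Proposition~\ref{prop:automorphism-group-classification} (Peisert) and Lemma~\ref{lem:size-of-A0} (Foulser--Kallaher). The non-exceptional part of your argument is correct: translations, $\zeta^2$, and $\alpha$ visibly preserve $(\F_q^*)^2$; self-complementarity via multiplication by a non-square and arc-transitivity via $\langle\zeta^2\rangle$ acting on the neighbourhood of $0$ put $P_q$ in the scope of Proposition~\ref{prop:automorphism-group-classification}; and inside $\Gamma L_1(p^n)$ the stabilizer of $0$ is exactly $\{x\mapsto g^{2m}x^{p^j}\}=\langle\zeta^2,\alpha\rangle$ because a semilinear map fixing $0$ preserves the neighbourhood $(\F_q^*)^2$ of $0$ iff its multiplier is a square. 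The order count $n(p^n-1)/2$ then follows from the standard form $d=2$, $e=0$, $s=1$. What each approach buys: Carlitz's proof is elementary and uniform in $q$; yours is shorter given the paper's toolkit but rests on Peisert's classification, which invokes far deeper group theory (ultimately the classification of relevant primitive permutation groups), so you should at least remark that Peisert's Theorem~4.2 does not itself presuppose Carlitz's theorem --- otherwise the argument is circular. (It does not, but this needs saying.)

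The one genuine gap is the exceptional orders. You dispose of $q=9$ via the rook graph $K_3\square K_3$ (and the order $72$ does match $np^n(p^n-1)/2$), but for $q\in\{49,81,529\}$ you only gesture at ``computer algebra or revisiting the case analysis.'' As written, the theorem is therefore not proved at those three orders. This is a finite and genuinely checkable task --- in Peisert's case analysis the graphs with enlarged automorphism groups at orders $49$, $81$, $529$ are the Peisert graphs $P^*_{49}$, $P^*_{81}$ and the exceptional graph $G(23^2)$, not the Paley graphs, which is exactly why Theorem~\ref{autPeisert} excludes those orders while Theorem~\ref{autPaley} has no exclusions --- but that verification must actually be carried out (or cited) for your proof to be complete.
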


\begin{thm}[\cite{WP2}*{Corollary 6.3}]\label{autPeisert}
Let $q=p^n$, where $p \equiv 3 \pmod 4$ and $n$ is even. If $q \neq 3^{2}, 7^{2}, 3^{4}, 23^{2}$, then $\operatorname{Aut}(P_q^*) \cong \F_{p^n} \times \left\langle\zeta^{4}, \zeta \alpha\right\rangle$ and $|\operatorname{Aut}(P_q^*)|=n p^{n}\left(p^{n}-1\right) / 4$.
\end{thm}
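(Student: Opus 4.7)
The plan is to reduce the computation of $\operatorname{Aut}(P_q^{\ast})$ to the classification results just stated: Proposition~\ref{prop:automorphism-group-classification} places the point stabilizer $\mathscr{A}_0$ of $0$ inside $\Gamma L_1(p^n)$ once the four sporadic orders are excluded, and Lemma~\ref{lem:size-of-A0} then writes $\mathscr{A}_0 = \langle \zeta^d, \zeta^e \alpha^s \rangle$ in standard form. I would first verify that $\zeta^4$ and $\zeta\alpha$ are genuine automorphisms of $P_q^{\ast}$: the map $\zeta^4: x \mapsto g^4 x$ fixes each $4$-th cyclotomic class setwise, and for $\zeta\alpha: x \mapsto g x^p$, using $p \equiv 3 \pmod 4$, a cyclotomic index $j \pmod 4$ is sent to $pj + 1 \equiv 3j + 1 \pmod 4$, which exchanges $\{0,1\}$ with $\{1,0\}$; thus both maps preserve $C_0 \cup C_1$, and both obviously fix $0$.

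Next I would pin down the parameters. Since the purely multiplicative part $\mathscr{A}_0 \cap \langle \zeta \rangle = \langle \zeta^d \rangle$ must contain $\zeta^4$, we have $d \mid 4$. To rule out $d \in \{1, 2\}$: multiplication by $g$ sends $1 \in C_0$ to $g \in C_1$ but $g \in C_1$ to $g^2 \in C_2$, so $\zeta \notin \mathscr{A}_0$; and $\zeta^2$ sends $1 \in C_0$ to $g^2 \in C_2$, so $\zeta^2 \notin \mathscr{A}_0$. Hence $d = 4$. The Galois-part homomorphism $\mathscr{A}_0 \to \operatorname{Gal}(\F_{p^n}/\F_p)$ has image $\langle \alpha^s \rangle$ and contains $\alpha$ (the image of $\zeta \alpha$), so $s = 1$. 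Finally, $\zeta\alpha$ must lie in the coset $\zeta^e \alpha \cdot \langle \zeta^4 \rangle$, which forces $e \equiv 1 \pmod 4$, and the normalization $0 \leq e < 4$ pins down $e = 1$. Applying Lemma~\ref{lem:size-of-A0} yields $|\mathscr{A}_0| = n(p^n - 1)/(sd) = n(p^n - 1)/4$, and combining with $\operatorname{Aut}(P_q^{\ast}) \cong \F_{p^n} \times \mathscr{A}_0$ gives the stated order $np^n(p^n - 1)/4$.

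The main obstacle is not the mechanical parameter-hunting above but rather the verification that Proposition~\ref{prop:automorphism-group-classification} applies to $P_q^{\ast}$. Self-complementarity is immediate, because $x \mapsto g^2 x$ is an additive bijection sending $C_0 \cup C_1$ to $C_2 \cup C_3$, identifying $P_q^{\ast}$ with its complement; and the existence of $\zeta^4$ and $\zeta\alpha$ as automorphisms, combined with translations, already implies symmetry on the set of ordered edges. The genuinely delicate point, and the content of Peisert's original sporadic analysis in \cite{WP2}, is showing that the four orders $q \in \{9,49,81,529\}$ are exactly the cases where $\mathscr{A}_0$ escapes $\Gamma L_1(p^n)$; once those are excluded by hypothesis, the remainder of the argument reduces to the routine calculation described above.
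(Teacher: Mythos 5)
This theorem is not proved in the paper at all: it is imported verbatim as \cite{WP2}*{Corollary 6.3}, so there is no in-paper argument to compare against. Your derivation is nevertheless correct, and it is essentially Peisert's own route (his Corollary 6.3 is deduced from his Theorem 4.2, which is what Proposition~\ref{prop:automorphism-group-classification} repackages). The chain of deductions checks out: self-complementarity via $x\mapsto g^2x$, symmetry because $\langle\zeta^4\rangle$ acts transitively on each of $C_0$ and $C_1$ while $\zeta\alpha$ swaps them, hence Proposition~\ref{prop:automorphism-group-classification} applies; then $\mathscr{A}_0\cap\langle\zeta\rangle=\langle\zeta^d\rangle$ together with $\zeta^4\in\mathscr{A}_0$ and $\zeta,\zeta^2\notin\mathscr{A}_0$ forces $d=4$, the surjection onto $\operatorname{Gal}(\F_{p^n}/\F_p)$ forces $s=1$, and membership of $\zeta\alpha$ in the coset $\zeta^e\alpha\langle\zeta^4\rangle$ forces $e=1$, after which Lemma~\ref{lem:size-of-A0} gives the order. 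Two small points worth making explicit if you write this up: first, your computation of $\zeta\alpha$ as $x\mapsto gx^p$ depends on the composition convention in $\F_q^*\rtimes\operatorname{Gal}(\F_q/\F_p)$ (with the opposite convention $x\mapsto(gx)^p$ the index map is $j\mapsto 3j+3$, which lands in the complement), and the convention you chose is the one consistent with the paper's description of elements of $\Gamma L_1(q)$; second, the identification $\mathscr{A}_0\cap\langle\zeta\rangle=\langle\zeta^d\rangle$ is implicit in, rather than literally stated by, the standard form of Lemma~\ref{lem:size-of-A0}, though it is exactly how Foulser--Kallaher characterize $d$.
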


For a generalized Paley graph, determining its automorphism group is much more difficult. In \cite{LP}, Lim and Praeger used an association scheme to study the automorphism groups of certain generalized Paley graphs. We expect it is even harder to determine the automorphism group of $X=PP(q, 2d, I)$ since it is a union of copies of generalized Paley graphs. While determining the automorphism group of $X$ remains open, we show that $X$ is not isomorphic to $P_q$ or $P_q^{\ast}$ in general. The following proposition is a stronger version of Corollary~\ref{cor:non-isomorphism-semi-primitive}.

\begin{prop}\label{prop:isomorphism}
Let $X=PP(q, 2d, I)$ be in its minimal representation such that $q=p^n\notin\{3^2, 7^2, 3^4, 23^2\}$.
\begin{enumerate}
    \item If $d=1$, then $X$ is isomorphic to the Paley graph $P_q$. 
    \item If $d=2$, then $X$ is isomorphic to the Peisert graph $P_{q}^{\ast}$. 
    \item If $d \geq 3$, then $X$ is not isomorphic to $P_q$ or $P_q^{\ast}$.
\end{enumerate}
\end{prop}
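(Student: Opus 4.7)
The plan is to handle cases (1) and (2) by direct inspection of the possible sets $I$ in the minimal representation, and then to attack case (3) via an automorphism-group order comparison.

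For case (1), minimality with $d=1$ forces $I\in\{\{0\},\{1\}\}$, giving $X=P_q$ and $X=\overline{P_q}$ respectively (Example~\ref{ex:pp}(1),(4)); and $\overline{P_q}\cong P_q$ by self-complementarity. For case (2), minimality with $d=2$ forces $|I|=2$ and excludes $\{0,2\}$ and $\{1,3\}$. Using the fact that multiplication by any $h\in\F_q^*$ gives a graph isomorphism $\operatorname{Cay}(\F_q^+,D)\to\operatorname{Cay}(\F_q^+,hD)$, we may shift so that $0\in I$, and are left with $I\in\{\{0,1\},\{0,3\}\}$. The first is literally $P_q^*$ (Example~\ref{ex:pp}(2)), and for the second the identity $g(C_0\cup C_3)=C_1\cup C_0$ shows that multiplication by $g$ realizes the isomorphism with $P_q^*$.

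Case (3) is the substantive one. Argue by contradiction: suppose $X\cong P_q$ or $X\cong P_q^*$. Self-complementary symmetry is preserved under graph isomorphism, so $X$ is self-complementary symmetric. Since $q\notin\{9,49,81,529\}$, Proposition~\ref{prop:automorphism-group-classification} applies: $\mathscr{A}_0:=\operatorname{Aut}(X)_0\leq\Gamma L_1(q)$ and $|\operatorname{Aut}(X)|=p^n|\mathscr{A}_0|$. Writing $\mathscr{A}_0$ in the standard form $\langle\zeta^{d_0},\zeta^{e}\alpha^{s}\rangle$ of Lemma~\ref{lem:size-of-A0} yields $|\mathscr{A}_0|=n(p^n-1)/(sd_0)$.

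The crux is the identification $d_0=2d$. Observe that $\zeta^{k}\in\mathscr{A}_0$ if and only if $g^{k}D=D$, which in turn is equivalent to $I+k\equiv I\pmod{2d}$. By Lemma~\ref{lem:I=I+k}, and using that $X$ is in its minimal representation, this holds iff $2d\mid k$. Thus $\mathscr{A}_0\cap\langle\zeta\rangle=\langle\zeta^{2d}\rangle$; comparing with the standard form and using $d_0,2d\mid (p^n-1)$ gives $d_0=2d$, whence $|\operatorname{Aut}(X)|=np^n(p^n-1)/(2ds)$. Matching this with $|\operatorname{Aut}(P_q)|=np^n(p^n-1)/2$ from Theorem~\ref{autPaley} forces $ds=1$, contradicting $d\geq 3$. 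Matching with $|\operatorname{Aut}(P_q^*)|=np^n(p^n-1)/4$ from Theorem~\ref{autPeisert} (whose excluded $q$-values coincide exactly with our hypothesis) forces $ds=2$, so $d\in\{1,2\}$, again contradicting $d\geq 3$. The main obstacle is the identification $d_0=2d$, where Lemma~\ref{lem:I=I+k} and the minimality of the representation do the essential work; once this is in place, the remainder is a clean order comparison.
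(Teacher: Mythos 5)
Your proof is correct and follows essentially the same route as the paper: reduce cases $d=1,2$ to Example~\ref{ex:pp}, and for $d\geq 3$ combine Theorem~\ref{SCSgraph}, Proposition~\ref{prop:automorphism-group-classification}, Lemma~\ref{lem:I=I+k} with Remark~\ref{rmk:I=I+k}, and Lemma~\ref{lem:size-of-A0} to compare $|\operatorname{Aut}(X)|$ with the orders in Theorems~\ref{autPaley} and~\ref{autPeisert}. The only (harmless) difference is that you pin down the standard-form parameter exactly as $d_0=2d$ via $\mathscr{A}_0\cap\langle\zeta\rangle=\langle\zeta^{2d}\rangle$, whereas the paper only needs the cruder bound $d'\geq 2d\geq 6$ to force $|\operatorname{Aut}(X)|\leq np^n(p^n-1)/6$, which already undercuts both target group orders.
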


\begin{proof}
It is straightforward to prove the cases $d=1$ and $d=2$. See Example~\ref{ex:pp}.

Next, we assume $d \geq 3$. Assume, to the contrary, that $X$ is isomorphic to $P_q$ or to $P_{q}^{\ast}$. In particular, $X$ is symmetric and self-complementary by Theorem~\ref{SCSgraph}. Let $\mathscr{A}=\operatorname{Aut}(X)$ and $\mathscr{A}_0\leq \mathscr{A}$ be the subgroup fixing $0$. By Proposition~\ref{prop:automorphism-group-classification}, $\mathscr{A}_{0}$ is a subgroup of $\operatorname{\Gamma L}_1(\F_q)$. 

By Lemma~\ref{lem:I=I+k} and Remark~\ref{rmk:I=I+k}, $\zeta^k: x\mapsto g^k x$ induces a graph automorphism of $X$ if and only $2d \mid k$. Therefore, each $\zeta^k\in \mathscr{A}_0$ with $k>0$ must satisfy $k\geq 2d \geq 6$. Using Lemma~\ref{lem:size-of-A0}, we have $\mathscr{A}_0 = \langle \zeta^{d'}, \zeta^{e} \alpha^{s} \rangle$ in its standard form for some $d'\geq 6$, $e\geq 0$ and $s\geq 1$.  Consequently, 
$$
|\mathscr{A}|=|\F_{p^n}||\mathscr{A}_0| = \frac{np^n (p^n-1)}{sd'} \leq \frac{np^n (p^n-1)}{6}.
$$
The automorphism group of $X$ has a smaller size than the automorphism group of both the Paley graph $P_q$ and the Peisert graph $P_q^*$ (if $n$ is even) shown in Theorem~\ref{autPaley} and Theorem~\ref{autPeisert}, which is a contradiction.
\end{proof}

Mullin \cite{NM}*{Chapter 8} conjectured that generalized Peisert graphs are distinct from Peisert and Paley graphs
for infinitely many prime powers. She verified the conjecture computationally for small prime powers \cite{NM}*{Section 5.4}. We confirm this conjecture below.

\begin{cor}\label{cor:mullin-conjecture}
If $d \geq 3$ and $q \equiv 1 \pmod {4d}$ and $q\notin\{3^2, 7^2, 3^4, 23^2\}$, then $GP^*(q,2d)$ is not isomorphic to $P_q$ or $P_q^*$. In fact, $GP^*(q,2d)$ is self-complementary but not symmetric.
\end{cor}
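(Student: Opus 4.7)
The plan is to apply Proposition~\ref{prop:isomorphism} to $GP^*(q,2d) = PP(q,2d,\{0,1,\ldots,d-1\})$ from Example~\ref{ex:pp}(\ref{ex:pp-5}), and then deduce the non-symmetry assertion from Theorem~\ref{SCSgraph}.

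The main preparatory step is to verify that the representation $PP(q,2d,\{0,1,\ldots,d-1\})$ is already minimal, so that Proposition~\ref{prop:isomorphism} is applicable. Suppose for contradiction that there exist $d' \mid d$ with $d' < d$ and $I' \subset \{0,1,\ldots,2d'-1\}$ such that the set $I = \{0,1,\ldots,d-1\}$ decomposes as $\bigcup_{j \in I'}\{j+2d'k : 0 \leq k \leq d/d'-1\}$. Since $0 \in I$ must lie in some such block, necessarily $0 \in I'$. Moreover, $d' \leq d/2$ because $d' \mid d$ and $d' < d$, so $d/d' \geq 2$, and the block starting at $0$ has last element $2d'(d/d'-1) = 2d - 2d' \geq d$, which lies outside $I$. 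This contradiction shows that $GP^*(q,2d)$ is in its minimal representation. Applying part~(3) of Proposition~\ref{prop:isomorphism} with $d \geq 3$ and $q \notin \{3^2,7^2,3^4,23^2\}$ then yields that $GP^*(q,2d)$ is not isomorphic to $P_q$ or $P_q^*$.

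Self-complementarity follows from an explicit isomorphism: the map $x \mapsto g^d x$ is an additive bijection of $\F_q$ that sends each cyclotomic class $C_j$ to $C_{j+d}$ (with indices taken modulo $2d$). Consequently, it sends the connection set $D = C_0 \cup C_1 \cup \cdots \cup C_{d-1}$ of $GP^*(q,2d)$ to $C_d \cup C_{d+1} \cup \cdots \cup C_{2d-1} = D^c$, the connection set of the complement graph, thereby realizing $GP^*(q,2d)$ as isomorphic to its own complement.

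For the final assertion that $GP^*(q,2d)$ is not symmetric, I would argue by contradiction. If $GP^*(q,2d)$ were both self-complementary and symmetric, then Theorem~\ref{SCSgraph} would force it to be isomorphic to a Paley graph, a Peisert graph, or the exceptional graph $G(23^2)$. The first two possibilities have just been ruled out, and the third is excluded because $G(23^2)$ has order $23^2$, which is disallowed by the hypothesis. The main (and essentially only) obstacle in this plan is the minimality verification; the remaining steps are direct consequences of results already established in the paper.
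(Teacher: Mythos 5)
Your proposal is correct and follows essentially the same route as the paper: minimality of the representation $PP(q,2d,\{0,1,\ldots,d-1\})$ plus Proposition~\ref{prop:isomorphism}(3) for the non-isomorphism, the map $x\mapsto g^d x$ for self-complementarity, and Theorem~\ref{SCSgraph} for non-symmetry. You are in fact slightly more careful than the paper in two spots — you actually verify minimality (the paper merely asserts it) and you explicitly rule out the exceptional graph $G(23^2)$ via the hypothesis $q\neq 23^2$ — and both of these verifications are correct.
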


\begin{proof}
By Example~\ref{ex:pp}, we identify $GP^{\ast}(q, 2d)$ with $PP(q, 2d, \{0, 1, \ldots, d-1\})$ which is in its minimal representation, and the complement of $GP^{\ast}(q, 2d)$ corresponds to $PP(q, 2d, \{d, d+1, \ldots, 2d-1\})$. The first assertion follows directly from Proposition~\ref{prop:isomorphism}. Consider the graph homomorphism:
\begin{align*}
    f\colon PP(q, 2d, \{0, 1, \ldots, d-1\})\to PP(q, 2d, \{d, d+1, \ldots, 2d-1\})
\end{align*} induced by $x\mapsto g^d x$ on the set of vertices. Then $f$ is an isomorphism from $GP^{\ast}(q,2d)$ to its complement. Thus, $GP^{\ast}(q,2d)$ is self-complementary. Since $GP^{\ast}(q,2d)$ is not isomorphic to the Paley graph or the Peisert graph, $GP^{\ast}(q,2d)$ cannot be symmetric by Theorem~\ref{SCSgraph}.
\end{proof}

Mathon \cite{Mathon} asked whether there is an infinite family of self-complementary strongly regular graphs of non-Paley type. This was already settled affirmatively by Peisert in Theorem~\ref{SCSgraph} as Peisert graphs are of non-Paley type. Klin, Kriger, and Woldar \cite{KKW16} also answered Mathon's question for graphs of order $p^2$. Corollary~\ref{cor:mullin-conjecture} gives another answer to this question. Indeed, when $-1$ is a power of $p$ modulo $2d$, the generalized Peisert graph $GP^{\ast}(q, 2d)$ is a self-complementary strongly regular graph, but not isomorphic to $P_q$ or $P_q^{\ast}$.

\section{The subspace structure of maximum cliques}\label{sect:subspace-structure}

In previous sections, we saw that a semi-primitive pseudo-Paley graph $PP(q,2d,I)$ is not isomorphic to $P_q$ unless there is an obvious reason. In this section, we continue to study these graphs and further investigate how they compare to Paley graphs from different perspectives.

\subsection{Maximum cliques come in pairs.}
\label{subsect:even-number-of-cliques} 
 In this subsection, we investigate a pairing between maximum cliques in a semi-primitive pseudo-Paley graph on $q=p^{4t}$ vertices. The goal is to prove
Theorem~\ref{theorem:even-number-cliques} on the parity of the number of canonical cliques: we will see that they come in pairs, and the two cliques in each pair are interchanged by the Galois automorphism $x\mapsto x^{\sqrt{q}}$. 

\begin{lem}\label{lem:C_0-contains-Fpt}
If $p^t \equiv -1 \pmod {2d}$, and $q=p^{2rt}$, then
$\F_{p^t}^* \subset C_0=\langle g^{2d} \rangle$.
\end{lem}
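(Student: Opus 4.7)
The plan is to show that $\F_{p^t}^*$, as a subgroup of $\F_q^*$, is generated by a power of $g$ that lies in $\langle g^{2d}\rangle$. Since $\F_q^* = \langle g\rangle$ is cyclic of order $q-1$, the multiplicative group $\F_{p^t}^*$ is the unique subgroup of order $p^t-1$, namely
\[
\F_{p^t}^* = \langle g^{N}\rangle, \qquad \text{where } N = \frac{q-1}{p^t-1} = \frac{p^{2rt}-1}{p^t-1} = \sum_{j=0}^{2r-1} p^{jt}.
\]
The containment $\F_{p^t}^* \subset \langle g^{2d}\rangle = C_0$ is therefore equivalent to the divisibility $2d \mid N$.

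Next, I would verify this divisibility using the semi-primitivity hypothesis $p^t \equiv -1 \pmod{2d}$. Reducing term by term gives $p^{jt}\equiv (-1)^j \pmod{2d}$, and so
\[
N = \sum_{j=0}^{2r-1} p^{jt} \equiv \sum_{j=0}^{2r-1} (-1)^j \equiv 0 \pmod{2d},
\]
because the alternating sum has an even number $2r$ of terms and collapses in pairs. This yields $2d \mid N$ and hence $\F_{p^t}^* = \langle g^{N}\rangle \subset \langle g^{2d}\rangle = C_0$, as required.

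There is essentially no obstacle beyond this short modular arithmetic computation; the only subtlety is to recognize that the exponent $N=(q-1)/(p^t-1)$ is exactly the geometric series whose base $p^t$ reduces to $-1$ modulo $2d$, at which point the cancellation is forced by the parity of the number of summands (here $2r$ is even, but in fact the argument only needs the exponent of $p$ in $q$ to be an even multiple of $t$, which is guaranteed by $q=p^{2rt}$).
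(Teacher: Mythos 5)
Your proof is correct and follows essentially the same route as the paper: both identify $\F_{p^t}^*$ as $\langle g^{N}\rangle$ with $N=(q-1)/(p^t-1)=\sum_{j=0}^{2r-1}p^{jt}$ and use $p^t\equiv-1\pmod{2d}$ to collapse the alternating sum to $0$ modulo $2d$. No issues.
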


\begin{proof}
Note that $g^k$ is a primitive root of $\F_{p^t}$, where $$
k=\frac{p^{2rt}-1}{p^t-1}=\sum_{j=0}^{2r-1} p^{jt} \equiv \sum_{j=0}^{2r-1} (-1)^j \equiv 0 \pmod {2d}
$$
since $p^t \equiv -1 \pmod {2d}$. Thus, $g^k \in \langle g^{2d} \rangle$, and it follows that $\F_{p^t}^* \subset C_0$.
\end{proof}

The following lemma is a simple consequence of Theorem~\ref{theorem:main-result} and Lemma~\ref{lem:Paley-equal-contribution}. 

\begin{lem}\label{lem:subfield-clique}
Let $X=PP(q, 2d, I)$ be a semi-primitive pseudo-Paley graph with $q=p^{2rt}$ where $r$ is even, and $I \neq \{0, 2, \ldots, 2d-2\}$. Then, the subfield $\F_{\sqrt{q}}$ is not a clique in $X$.
\end{lem}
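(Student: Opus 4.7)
\textbf{Proof proposal for Lemma~\ref{lem:subfield-clique}.} The plan is to argue by contradiction: I will assume that $\F_{\sqrt{q}}$ is a clique in $X = PP(q,2d,I)$ and deduce that $I$ must equal $\{0,2,\ldots,2d-2\}$, contradicting the hypothesis. The key tool is the observation (already isolated inside the proof of Lemma~\ref{lem:Paley-equal-contribution}) that when $r$ is even, the nonzero elements of $\F_{\sqrt{q}}$ lie exclusively in the even-indexed cyclotomic classes $C_0,C_2,\ldots,C_{2d-2}$, and are distributed equally among them.

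More precisely, the first step is to recall that a primitive root of $\F_{\sqrt{q}}$ is $g^{p^{rt}+1}$, and that $p^t\equiv -1\pmod{2d}$ together with $r$ even forces $p^{rt}+1\equiv 2\pmod{2d}$. Consequently, as in equation~\eqref{subfield}, the elements of $\F_{\sqrt{q}}^{*}$ have indices (with respect to $g$) congruent to $0$ modulo $2$ but ranging over all even residues modulo $2d$. In particular, for each $j\in\{0,1,\ldots,d-1\}$ the intersection $\F_{\sqrt{q}}^{*}\cap C_{2j}$ has size exactly $(\sqrt{q}-1)/d$, and is therefore nonempty.

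The second step assembles the contradiction. If $\F_{\sqrt{q}}$ were a clique in $X$, then $\F_{\sqrt{q}}^{*}\subset D=\bigcup_{\ell=1}^{d}C_{m_\ell}$. Because $D$ is a disjoint union of complete cyclotomic classes, any cyclotomic class that meets $D$ must be entirely contained in $D$. Combining this with the first step, each class $C_{2j}$ with $0\le j\le d-1$ is forced to lie in $D$, so $\{0,2,\ldots,2d-2\}\subset I$. Since $|I|=d=|\{0,2,\ldots,2d-2\}|$, equality holds, contradicting the hypothesis that $I\neq\{0,2,\ldots,2d-2\}$.

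There is no serious obstacle here: everything I need is contained in the congruence $p^{rt}+1\equiv 2\pmod{2d}$ and the fact that $D$ is a union of full cyclotomic classes. The argument does not require Theorem~\ref{EKRPaley} or Theorem~\ref{theorem:main-result}; only the elementary distribution computation extracted from the proof of Lemma~\ref{lem:Paley-equal-contribution} is used, which is why the statement holds without any assumption on $\omega(X)$.
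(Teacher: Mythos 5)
Your proof is correct, and it takes a genuinely lighter route than the paper's. The paper argues: if $\F_{\sqrt{q}}$ were a clique it would be a maximum clique (of size $\sqrt{q}$), and then it compares equation~\eqref{main} from Theorem~\ref{theorem:main-result} with equation~\eqref{subfield} to force $I=\{0,2,\ldots,2d-2\}$. That is, the paper leans on the equal-contribution theorem, whose proof goes through the Fourier-analytic Proposition~\ref{proposition:main-result} and the Gauss-sum computations of the appendix. You instead use only the containment direction: a clique $A$ satisfies $A-A\subset D\cup\{0\}$, and since $\F_{\sqrt{q}}-\F_{\sqrt{q}}=\F_{\sqrt{q}}$, being a clique forces $\F_{\sqrt{q}}^{*}\subset D$; then the congruence $p^{rt}+1\equiv 2\pmod{2d}$ (with $2d\mid\sqrt{q}-1$) shows $\F_{\sqrt{q}}^{*}$ meets every even-indexed class, and since $D$ is a disjoint union of full cyclotomic classes, $\{0,2,\ldots,2d-2\}\subseteq I$, whence equality by cardinality. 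This makes the lemma independent of Theorem~\ref{theorem:main-result}, which is a genuine gain in self-containment (you need nonemptiness of $\F_{\sqrt{q}}^{*}\cap C_{2j}$, not the exact count $(\sqrt{q}-1)/d$); the paper's version is shorter only because the heavy theorem is already in hand. Your closing remark that no assumption on $\omega(X)$ is needed is consistent with the paper as well, since the lemma's hypotheses do not mention $\omega(X)$ either.
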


\begin{proof}
If $\F_{\sqrt{q}}$ were a clique in $X$, then it would be a maximum clique. Comparing equations \eqref{main} and \eqref{subfield}, we get $I=\{0, 2, \ldots , 2d-2\}$, a contradiction. \end{proof}

\begin{lem}\label{lem:group-theory}
Let $g$ be a primitive root of $\F_{q}$ where $q=p^{4t}$ and $p^t\equiv -1\pmod{2d}$. Set $C_0=\langle g^{2d}\rangle$, and pick $x\in C_0$. If $\{x, x^{p^{2t}}\}$ is linearly dependent over $\F_{p^t}$, then $x\in \langle g^{(p^{2t}+1)d} \rangle$. 
\end{lem}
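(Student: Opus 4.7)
The plan is to show directly that $x^{(p^{2t}-1)/d}=1$; since $\langle g^{(p^{2t}+1)d}\rangle$ is the unique subgroup of the cyclic group $\F_q^*$ of order $(p^{2t}-1)/d$, this forces $x\in \langle g^{(p^{2t}+1)d}\rangle$. Note that $d \mid p^t+1 \mid p^{2t}-1$ (since $p^t\equiv -1\pmod{2d}$), so $(p^{2t}-1)/d$ is an integer, and $(p^{2t}+1)d$ indeed divides $p^{4t}-1=(p^{2t}-1)(p^{2t}+1)$.

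The first step translates the linear-dependence hypothesis into a multiplicative condition. Since $x\neq 0$, linear dependence of $\{x, x^{p^{2t}}\}$ over $\F_{p^t}$ yields some $c\in\F_{p^t}^*$ with $x^{p^{2t}}=cx$, hence $x^{p^{2t}-1}=c\in \F_{p^t}^*$. Raising to the $(p^t-1)$th power gives $x^{(p^{2t}-1)(p^t-1)}=1$. On the other hand, $x\in C_0$ yields $x^{|C_0|}=x^{(p^{4t}-1)/(2d)}=1$. Using the factorizations
\[
(p^{2t}-1)(p^t-1)=\frac{p^{2t}-1}{d}\cdot d(p^t-1),\qquad \frac{p^{4t}-1}{2d}=\frac{p^{2t}-1}{d}\cdot\frac{p^{2t}+1}{2},
\]
and setting $y\colonequals x^{(p^{2t}-1)/d}$, I deduce that the order of $y$ divides both $d(p^t-1)$ and $(p^{2t}+1)/2$, and hence divides their gcd.

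It then suffices to verify $\gcd\bigl(d(p^t-1),\,(p^{2t}+1)/2\bigr)=1$. The semi-primitivity hypothesis $p^t\equiv -1\pmod{2d}$ gives $p^{2t}\equiv 1\pmod{2d}$, so $p^{2t}+1\equiv 2\pmod{2d}$; in particular $(p^{2t}+1)/2\equiv 1\pmod d$, so $(p^{2t}+1)/2$ is coprime to $d$. Since any odd square satisfies $p^{2t}\equiv 1\pmod 8$, one has $v_2(p^{2t}+1)=1$, so $(p^{2t}+1)/2$ is odd. Moreover $p^{2t}+1\equiv 2\pmod{p^t-1}$ implies $\gcd(p^{2t}+1, p^t-1)=2$; combined with $(p^{2t}+1)/2$ being odd, this shows $(p^{2t}+1)/2$ is coprime to $p^t-1$. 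Putting the two coprimality statements together yields the desired gcd equals $1$. Therefore $y=1$, i.e., $x^{(p^{2t}-1)/d}=1$, which completes the proof.

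The main obstacle is the concluding gcd computation: one has to juggle two simultaneous congruences (modulo $2d$ and modulo $p^t-1$) and keep careful track of the single factor of $2$ in $p^{2t}+1$. The rest of the argument is simply a clean exponent manipulation exploiting the factorization of $p^{4t}-1$.
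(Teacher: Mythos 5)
Your proof is correct and rests on the same arithmetic as the paper's: the hypothesis gives $x^{p^{2t}-1}\in\F_{p^t}^*$, and the conclusion follows from $\gcd(p^{2t}+1,p^t-1)=2$ together with the coprimality of $(p^{2t}+1)/2$ with $2d$ (which the paper gets from $p^{2t}\equiv 1\pmod{4d}$ and you get from $(p^{2t}+1)/2\equiv 1\pmod d$ plus oddness). The only difference is presentational — you work with element orders and the unique subgroup of order $(p^{2t}-1)/d$, while the paper writes $x=g^k$ and tracks divisibility conditions on the exponent $k$.
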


\begin{proof}
If $\{x, x^{p^{2t}}\}$ is linearly dependent over $\F_{p^t}$, then $x^{p^{2t}-1}\in \F_{p^t}^{\ast}$. After expressing $x=g^k$ for some integer $k$, we obtain $g^{(p^{2t}-1)k}\in \F_{p^t}^{\ast}$. As $\F_{p^t}^{\ast}=\langle g^{(q-1)/(p^t-1)}\rangle$, and $q=p^{4t}$ we must have
$$
\frac{q-1}{p^t-1} \mid (p^{2t}-1)k \ \Rightarrow \ (p^{2t}+1)(p^t+1) \mid (p^{2t}-1)k \ \Rightarrow \ p^{2t}+1 \mid (p^t-1)k.
$$
Using $\gcd(p^{2t}+1, p^t-1)=2$, we obtain $\frac{p^{2t}+1}{2} \mid k$. Next, $x=g^k\in C_0=\langle g^{2d}\rangle$  implies that $2d\mid k$. We will now combine these two divisibility relations $\frac{p^{2t}+1}{2} \mid k$ and $2d\mid k$.

Observe that $p^t\equiv -1\pmod{2d}$ implies $p^{2t}\equiv 1\pmod {4d}$, so that $\frac{p^{2t}+1}{2}\equiv 1\pmod{2d}$. Therefore, $\gcd(\frac{p^{2t}+1}{2}, 2d)=1$. Combining $\frac{p^{2t}+1}{2}\mid k$ and $2d\mid k$, we conclude $(p^{2t}+1)d\mid k$ and hence $x=g^k \in \langle g^{(p^{2t}+1)d} \rangle$. 
\end{proof}

We are ready to present the proof of Theorem~\ref{theorem:even-number-cliques}.

\begin{proof}[Proof of Theorem~\ref{theorem:even-number-cliques}]
Without loss of generality, we may assume $m_1=0$. By assumption, $X$ is a Cayley graph on $q=p^{4t}$ many vertices where $p^{t}\equiv -1\pmod{2d}$. Note that the map $f\colon\F_q\to \F_q$ given by $f(x)=x^{\sqrt{q}}$ preserves the edges of the graph: if $x,y \in \F_q$ and $x-y\in D$, then $$f(x)-f(y)=x^{\sqrt{q}}-y^{\sqrt{q}} = (x-y)^{\sqrt{q}} \in D$$ because $\sqrt{q}=p^{2t}\equiv 1 \pmod{2d}$. Note that $f$ fixes each element of $\F_{\sqrt{q}}$ pointwise; in particular, $f$ fixes $\F_{p^t}$. It is also evident that $f$ is a bijection on the vertex set $\F_q$. We conclude that $f$ is a graph automorphism of $X$ which fixes $\F_{p^t}$, and hence must send maximum cliques containing $\F_{p^t}$ to maximum cliques containing $\F_{p^t}$. Let $A$ be a clique of size $\sqrt{q}$, such that $A$ is a $\F_{p^t}$-subspace containing $\F_{p^t}$. We claim that $f(A)\neq A$. The theorem immediately follows from this claim since $f$ is an involution.

We assume, to the contrary, that $f(A)=A$. Since $I$ is different from $\{0, 2, \ldots, 2d-2\}$, the subfield $\F_{\sqrt{q}}$ is not a clique in $X$ by Lemma~\ref{lem:subfield-clique}. Since $A\subset \F_{q}=\F_{p^{4t}}$ is a $2$-dimensional subspace over $\F_{p^{t}}$ containing $\F_{p^t}$ and $A\neq \F_{\sqrt{q}}=\F_{p^{2t}}$, we must have $A\cap \F_{\sqrt{q}}=\F_{p^{t}}$. We will estimate the number of $x\in C_0\cap A$ such that $x$ and $f(x)=x^{p^{2t}}$ are linearly independent over $\F_{p^{t}}$, and use Theorem~\ref{theorem:main-result} to show that there are too many such elements when $f(A)=A$. 

Let $V$ be the set of elements $x$ in $C_0 \cap A$ such that $\{x,x^{p^{2t}}\}$ are dependent over $\F_{p^t}$. By Lemma~\ref{lem:group-theory}, we have $V \subset \langle g^{(p^{2t}+1)d}\rangle$. As $g^{(p^{2t}+1)d}\in \langle g^{\sqrt{q}+1}\rangle = \F_{\sqrt{q}}$, we have $V\subset \F_{\sqrt{q}}$.  In particular, $V \subset A\cap \F_{\sqrt{q}}=\F_{p^t}$. On the other hand, it is clear that $\F_{p^{t}}^{\ast}\subset V$ and $0\notin V$. Therefore, $V=\F_{p^t}^{\ast}$. Theorem~\ref{theorem:main-result} states that $|C_0 \cap A|=\frac{p^{2t}-1}{d}$, which implies that there are $\frac{p^{2t}-1}{d}-(p^t-1)$ elements $x$ in $C_0 \cap A$ such that $\{x,x^{p^{2t}}\}$ are independent over $\F_{p^t}$, that is, the complement $V':=C_0 \cap (A\setminus \F_{p^t}^{\ast})$ has  $\frac{p^{2t}-1}{d}-(p^t-1)$  elements.

Since $f$ fixes $A$, and $A$ is a $\F_{p^t}$-subspace, we can apply Lemma~\ref{lem:C_0-contains-Fpt} to obtain the following inclusion of sets:
$$
\bigcup_{x \in V'}  \big(x\F_{p^t}^{\ast} \cup  x^{p^{2t}}\F_{p^t}^{\ast} \big) \subset C_0 \cap (A\setminus \F_{p^t}^{\ast})=V'.
$$
Note that $|x\F_{p^t}^* \cup  x^{p^{2t}}\F_{p^t}^{\ast}|=2(p^t-1)$ for each $x \in V'$. If $x, y \in V'$, such that $x\F_{p^t}^{\ast} \cup  x^{p^{2t}}\F_{p^t}^{\ast}=y\F_{p^t}^{\ast} \cup  y^{p^{2t}}\F_{p^t}^{\ast}$, then $y \in x\F_{p^t}^{\ast} \cup x^{p^{2t}}\F_{p^t}^{\ast}$. Therefore,
$$
|V'| \geq \bigg |\bigcup_{x \in V'}  \big(x\F_{p^t}^{\ast} \cup  x^{p^{2t}}\F_{p^t}^{\ast} \big) \bigg| \geq 2(p^t-1)\bigg\lceil \frac{|V'|}{2(p^t-1)} \bigg\rceil. 
$$
This implies that $\frac{|V'|}{2(p^t-1)}$ is an integer. However, $p^t \equiv -1 \pmod {2d}$ implies that $\frac{|V'|}{2(p^t-1)}=\frac{p^t+1}{2d}-\frac{1}{2} \notin \Z$, a contradiction. We conclude that $f(A)\neq A$, which completes the proof.
\end{proof}

\begin{ex}
If $X$ is the Peisert graph with order $q=p^4$ where $p>3$, then there are $0$ maximum cliques of size $\sqrt{q}$ with the subspace structure because $\F_p$ forms a maximal clique in $P_q^*$ \cite{AY}*{Theorem 1.5}.
\end{ex}

\begin{ex}[Graphs of order $625$ and $2401$]
If $X=PP(5^4, 6, I)$ with $I\neq \{0, 2, 4\}, \{1, 3, 5\}$ then $X$ contains either $0$ or $2$ maximum cliques of the kind described in Theorem~\ref{theorem:even-number-cliques}. A similar conclusion holds for $X=PP(7^4, 8, I)$. 
\end{ex}

With the help of SageMath, we noticed that in many examples of semi-primitive pseudo-Paley graphs, there are precisely $2$ cliques of size $\sqrt{q}$ containing $\F_{p^t}$. These examples are presented in Appendix~\ref{sect:numerical-evidence}. This leads us to the following conjecture, which simultaneously strengthens and generalizes Theorem~\ref{theorem:even-number-cliques}.

\begin{conj}\label{conj:2-max-cliques}
Let $X=PP(q, 2d, I)$ be a semi-primitive pseudo-Paley graph with $q=p^{2rt}$ where $r$ is even. Assume that $0 \in I$ and $I \neq \{0,2,\ldots, 2d-2\}$. If $\omega(X)=\sqrt{q}$, then the number of maximum cliques containing $\F_{p^t}$ is $2$.
\end{conj}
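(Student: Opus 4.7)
The plan is to combine Theorem~\ref{theorem:main-result} with a Galois-theoretic action that extends the proof of Theorem~\ref{theorem:even-number-cliques} from $r=2$ to general even $r$. Write $\mathcal{S}$ for the set of maximum cliques of $X$ containing $\F_{p^t}$, and assume $\mathcal{S}\ne\emptyset$. Since $p^{2t}\equiv 1\pmod{2d}$, each Frobenius power $\phi_j\colon x\mapsto x^{p^{2jt}}$ with $0\le j\le r-1$ sends every cyclotomic class $C_m$ into itself and fixes $\F_{p^t}$ pointwise; hence the group $G=\langle\phi_1\rangle\cong\mathbb{Z}/r$ acts on $\mathcal{S}$. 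Granting Conjecture~\ref{mainconj}, every $A\in\mathcal{S}$ is an $r$-dimensional $\F_{p^t}$-subspace of $\F_q$, so $G$ permutes a finite family of such subspaces.

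For the lower bound $|\mathcal{S}|\ge 2$, I would generalize Theorem~\ref{theorem:even-number-cliques} by taking the involution $f=\phi_{r/2}\colon x\mapsto x^{\sqrt{q}}$. The auxiliary step is an analogue of Lemma~\ref{lem:group-theory}: for $x\in C_0$, $\F_{p^t}$-linear dependence of $\{x,x^{\sqrt{q}}\}$ should force $x\in\F_{\sqrt{q}}$, which follows from a $\gcd$ computation using $p^t\equiv -1\pmod{2d}$ together with standard cyclotomic factorizations of $\sqrt{q}-1$ and $\sqrt{q}+1$. Combined with Theorem~\ref{theorem:main-result}'s formula $|C_0\cap A|=(\sqrt{q}-1)/d$, the counting argument at the end of the proof of Theorem~\ref{theorem:even-number-cliques} then shows $f(A)\ne A$ for every $A\in\mathcal{S}$, yielding evenness.

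The decisive obstacle is the upper bound $|\mathcal{S}|\le 2$. I propose a two-pronged attack: (a) show every $A\in\mathcal{S}$ is stabilized by the unique index-two subgroup $\langle\phi_1^2\rangle\le G$, so that each $G$-orbit has size exactly $2$; and (b) show $G$ acts transitively on $\mathcal{S}$. For (b), given distinct $A,A'\in\mathcal{S}$, Theorem~\ref{theorem:main-result} applied to both forces $|A\cap A'\cap C_{m_j}|\le(\sqrt{q}-1)/d$ and $|(A\cup A')\cap C_{m_j}|\ge(\sqrt{q}-1)/d$ for every $j$; a character-sum estimate built on the Gauss sum machinery of Appendix~\ref{sect:gauss-sums} should upgrade this to a genuine cardinality contradiction unless $A$ and $A'$ are Galois related.

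Step (a) is where I expect the proof to be hardest, because nothing in the paper so far constrains the field of definition of a maximum clique beyond $\F_{p^t}$, and Lemma~\ref{lem:group-theory} does not generalize cleanly with $p^{2jt}+1$ in place of $p^{2t}+1$ when $j\ge 2$. A natural route is to study the $\F_{p^t}$-linear action of $\phi_1$ on $A$: its minimal polynomial divides $x^r-1$, and its eigenvalues on $A\otimes_{\F_{p^t}}\overline{\F_{p^t}}$ are $r$-th roots of unity. I would then attempt to use the rigidity of $|A\cap C_{m_j}|$ provided by Theorem~\ref{theorem:main-result}, together with Stickelberger-type Gauss-sum evaluations from Appendix~\ref{sect:gauss-sums}, to restrict the multiplicities of these eigenvalues and force $\phi_1^2|_A=\mathrm{id}$. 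The computational evidence in Appendix~\ref{sect:numerical-evidence} should pinpoint the exact spectral invariant to target.
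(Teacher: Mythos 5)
This statement is an open conjecture in the paper: the authors offer only numerical evidence (Appendix~\ref{sect:numerical-evidence}) plus the much weaker parity result Theorem~\ref{theorem:even-number-cliques}, which is restricted to $r=2$ and to cliques that are \emph{a priori} $\F_{p^t}$-subspaces. Your proposal is a research plan rather than a proof, and its decisive steps are unsupported. The entire upper bound $|\mathcal{S}|\le 2$ rests on claims (a) and (b), for neither of which you give an argument: for (b), the inequalities you extract from Theorem~\ref{theorem:main-result} are immediate but carry no force, and ``a character-sum estimate should upgrade this to a contradiction unless $A$ and $A'$ are Galois related'' is not a proof sketch --- nothing in the Gauss-sum machinery of Appendix~\ref{sect:gauss-sums} distinguishes two subspaces with identical cyclotomic intersection patterns. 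Worse, your architecture is internally inconsistent when $4\mid r$: if, as in (a), every $A\in\mathcal{S}$ is stabilized by $\langle\phi_1^2\rangle$, then $f=\phi_1^{r/2}\in\langle\phi_1^2\rangle$ also stabilizes $A$, which contradicts the $f(A)\ne A$ you need for the evenness step. So at least one of your two pillars must fail for those $r$, and you have no mechanism to decide which.

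The lower bound also does not generalize as claimed. The proof of Theorem~\ref{theorem:even-number-cliques} crucially uses that a $2$-dimensional $\F_{p^t}$-subspace $A\ne\F_{\sqrt{q}}$ containing $\F_{p^t}$ satisfies $A\cap\F_{\sqrt{q}}=\F_{p^t}$, which pins down the ``dependent'' set $V$ to be exactly $\F_{p^t}^*$ and makes $|V'|=\frac{\sqrt{q}-1}{d}-(p^t-1)$ computable; the divisibility contradiction then follows. For $r>2$ an $r$-dimensional $A$ can meet $\F_{\sqrt{q}}$ in a subspace of any dimension up to $r-1$, so $|V|=|\langle g^{d(\sqrt{q}+1)}\rangle\cap A|$ is not determined and the congruence $|V'|\equiv 0\pmod{2(p^t-1)}$ yields no contradiction. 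Finally, note that $\mathcal{S}$ in the conjecture consists of \emph{all} maximum cliques containing $\F_{p^t}$, not only subspace cliques, so your argument is doubly conditional: on Conjecture~\ref{mainconj} and on your unproven steps (a) and (b). As written, the proposal does not establish the statement.
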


\subsection{The density result.}\label{subsect:density-zero} 
In this subsection, we discuss the density of the graphs achieving the trivial upper bound on the clique number.

Fix a positive integer $N$. Define $\mathcal{PP}(N)$ to be the collection of all semi-primitive pseudo-Paley graphs of the form $X=PP(q, 2d, I)$ with $q=p^{2rt}\leq N$ and $r$ even. Let $\mathcal{F}(N)$ be the subset of $\mathcal{PP}(N)$ consisting of those graphs $X=PP(q, 2d, I)$ with $\omega(X)=\sqrt{q}$. Since $\omega(X)\leq\sqrt{q}$ always holds, the graphs in $\mathcal{F}(N)$ are precisely those that attain the trivial upper bound on the clique number.

\begin{prop}\label{prop:density-zero} Assume that Conjecture~\ref{mainconj} is true. The density of graphs attaining the trivial upper bound on the clique number inside the family of semi-primitive pseudo-Paley graphs is zero; that is, 
$$
\lim_{N \to \infty} \frac{\# \mathcal{F}(N)}{\#\mathcal{PP}(N)}=0.
$$
\end{prop}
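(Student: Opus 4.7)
The plan is to combine Conjecture~\ref{mainconj} with Theorem~\ref{theorem:main-result} to upper-bound $\#\mathcal{F}(N)$, and then to exhibit an explicit large subfamily of $\mathcal{PP}(N)$ to lower-bound $\#\mathcal{PP}(N)$; the two estimates together will force the ratio to zero.

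First I would bound the numerator. Suppose $X=PP(q,2d,I)\in\mathcal{F}(N)$. Under Conjecture~\ref{mainconj}, $X$ contains a maximum clique $A$ that is an $\F_{p^t}$-affine subspace, and after translating we may assume $0\in A$, so $A$ equals an $\F_{p^t}$-linear subspace $V$ of dimension $r$ that forms a clique in $X$. Theorem~\ref{theorem:main-result} then forces $I=\{j : V\cap C_j\neq\emptyset\}$, so $I$ is completely determined by $V$. Hence, for each parameter tuple $(p,d,r,t)$ with $q=p^{2rt}\leq N$, the number of $I$ with $PP(q,2d,I)\in\mathcal{F}(N)$ is at most the number of $r$-dimensional $\F_{p^t}$-subspaces of $\F_q$, namely the Gaussian binomial coefficient $\binom{2r}{r}_{p^t}\leq C(p^t)^{r^2}=Cq^{r/2}$ for an absolute constant $C$. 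Summing over the polynomially many valid tuples and using $r=O(\log N)$, this yields $\#\mathcal{F}(N)\leq \exp(O((\log N)^2))$.

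Next I would lower-bound the denominator by a single well-chosen subfamily. For every odd prime $p$ and every positive integer $t$, the choice $d=(p^t+1)/2$ together with $r=2$ yields a valid semi-primitive tuple: one checks that $t$ is the smallest integer with $p^t\equiv -1\pmod{2d}$ because $p^s+1<2d$ for all $s<t$. Specializing to $p=3$ and $t=\lfloor(\log_3 N)/4\rfloor$, we obtain $q=3^{4t}\leq N$ and $d=(3^t+1)/2=\Theta(N^{1/4})$. The number of admissible sets $I$ with $|I|=d$ and $0\in I$ is $\binom{2d-1}{d-1}\geq 2^{2d-2}/\sqrt{2d-1}=\exp(\Omega(N^{1/4}))$ by Stirling, and distinct $I$'s produce distinct connection sets. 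Even after dividing by $|\Gamma L_1(q)|\leq q^2$ to pass from labeled Cayley graphs to isomorphism classes, this gives $\#\mathcal{PP}(N)\geq\exp(\Omega(N^{1/4}))$. Combining with the numerator bound,
$$
\frac{\#\mathcal{F}(N)}{\#\mathcal{PP}(N)}\leq \exp\bigl(O((\log N)^2)-\Omega(N^{1/4})\bigr)\longrightarrow 0
$$
as $N\to\infty$, since $N^{1/4}$ grows faster than any polynomial in $\log N$.

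The main obstacle is the careful counting in the denominator: different tuples $(d,I)$ can in principle represent the same Cayley graph or isomorphic graphs. This is resolved by choosing $d=(p^t+1)/2$ maximal so that the cyclotomic partition of $\F_q^*$ into $2d=p^t+1$ classes is the finest possible; within this partition each $I$ produces a distinct connection set, and the overcount from graph isomorphisms is controlled by the polynomial bound $|\Gamma L_1(q)|\leq q^2$, which is comfortably absorbed by the super-polynomial gap between the two bounds.
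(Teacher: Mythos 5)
Your proof is correct and follows essentially the same strategy as the paper: the numerator is bounded by injecting graphs of $\mathcal{F}(N)$ into Grassmannians via Conjecture~\ref{mainconj} and the fact that Theorem~\ref{theorem:main-result} lets a single maximum-clique subspace recover $I$, while the denominator is bounded below by counting the $\binom{2d-1}{d-1}$ choices of $I$ for the finest cyclotomic partition $2d=p^t+1$. The only (immaterial) differences are that you fix $p=3$ and let $t$ grow where the paper invokes Bertrand's postulate to take $t=1$ and a prime $p_0\approx N^{1/4}$, and that your division by $|\Gamma L_1(q)|$ is an unneeded precaution, since the paper counts parametrized graphs $PP(q,2d,I)$ rather than isomorphism classes.
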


\begin{proof}
Given a prime power $q=p^{2rt}\leq N$, with $r$ even, let $\mathcal{PP}(N, p, t) \subset \mathcal{PP}(N)$ be the subset consisting of graphs $PP(q,2d,I)$ with $q=p^{2rt}$ vertices such that $t$ is the smallest positive integer such that $p^t \equiv -1\pmod{2d}$. Similarly, define $\mathcal{F}(N, p, t)=\mathcal{F}(N)\cap \mathcal{PP}(N, p, t)$.  We have the following decomposition depending on $p$ and $t$:
\begin{align*}
\mathcal{PP}(N) = \bigsqcup_{p^t \leq N^{1/4}} \mathcal{PP}(N, p, t), \quad \text{and} \quad \mathcal{F}(N) = \bigsqcup_{p^t \leq N^{1/4}} \mathcal{F}(N, p, t).
\end{align*} 
This is a disjoint union because for any $PP(q, 2d, I) \in \mathcal{PP}(N)$, there is a unique $(p, t)$ such that  $PP(q, 2d, I)\in \mathcal{PP}(N, p, t)$. Indeed, $q$ determines the value of $p$, and the congruence $p^{t}\equiv -1\pmod{2d}$ uniquely determines the value of smallest such $t$. 

Recall that the Grassmannian $\operatorname{Gr}(r, 2r)(\F_{p^t})$ is the set of all $r$-dimensional $\F_{p^t}$-subspaces inside $\F_{(p^t)^{2r}}=\F_{q}$. Given a graph $X=PP(q, 2d, I)\in\mathcal{F}(N, p, t)$ with $q=p^{2rt}$, Conjecture~\ref{mainconj} implies that the set of maximum cliques in $X$ containing $0$ gives a subset $S_{X}\subset \operatorname{Gr}(r, 2r)(\F_{p^t})$ with $S_X \neq \emptyset$. If $Y \in\mathcal{F}(N, p, t)$ with $Y=PP(q,2d,J)$, and $I\neq J$, then we claim that $S_X\cap S_{Y}=\emptyset$. Indeed, by Theorem~\ref{theorem:main-result}, a maximum clique $V \in S_X$ uniquely determines $I$ because $I = \{ j \ : \ 0\leq j\leq 2d-1 \text{ and } |C_j\cap V|=\frac{\sqrt{q}-1}{d}\}$, and hence the graph $X$. If $V\in S_X\cap S_Y$, then $X=Y$. The claim is proved, and we obtain:
$$
\# \mathcal{F}(N, p, t) \leq \# \bigsqcup_{p^{2rt}\leq N} \operatorname{Gr}(r, 2r)(\F_{p^t}).
$$
The cardinality of the Grassmannian can be bounded as follows:
$$
\# \operatorname{Gr}(r, 2r)(\F_{p^t}) = \frac{(p^{2rt}-1)(p^{2rt}-p^t)\cdots (p^{2rt}-p^{(r-1)t})}{(p^{rt}-1)(p^{rt}-p^t)\cdots (p^{rt}-p^{(r-1)t})} 
\leq p^{2r^2t}.
$$
Thus, 
\begin{equation*}
\# \mathcal{F}(N,p,t) \leq \sum_{p^{2rt} \leq N}(p^t)^{2r^2} \leq \ln(N)\cdot N^{r} \leq \ln(N)\cdot N^{\ln(N)} = \ln(N) \cdot e^{(\ln(N))^2},    
\end{equation*}
which implies, 
\begin{equation}\label{FN}
\# \mathcal{F}(N)=\sum_{p^{4t} \leq N} \# \mathcal{F}(N,p,t) \leq N\cdot \ln(N) \cdot e^{(\ln(N))^2}. 
\end{equation}

On the other hand, the number of graphs of the form $PP(q, 2d, I) \in \mathcal{PP}(N, p, t)$ with $q=p^{2rt}$ and $d=(p^t+1)/2$ is given by the binomial coefficient $\binom{2d}{d}=\binom{p^t+1}{(p^t+1)/2}$ since $I\subset\{0, 1, \ldots, 2d-1\}$ with $\# I = d$. Using Stirling's approximation,
\begin{equation*}
\# \mathcal{PP}(N, p, t) \geq  \binom{p^t+1}{(p^t+1)/2} \geq \frac{4^{p^t}}{\sqrt{4p^t}}.
\end{equation*}

By Chebyshev's Theorem, there exists a prime $p_0$ such that $\frac{1}{2}N^{1/4}<p_0<N^{1/4}$ for $N$ large. As a result, 
\begin{equation}\label{PPN}
\#\mathcal{PP}(N)\geq \#\mathcal{PP}(N, p_0, 1) \geq \frac{4^{p_0}}{\sqrt{4p_0}}\geq \frac{2^{N^{1/4}}}{\sqrt{2N^{1/4}}}.
\end{equation}
Combining inequalities \eqref{FN} and \eqref{PPN}, 
$$
\limsup_{N\to\infty} \frac{\#\mathcal{F}(N)}{\#\mathcal{PP}(N)} \leq \limsup_{N\to\infty} \frac{N\cdot \ln(N) \cdot e^{(\ln(N))^2}}{ \frac{2^{N^{1/4}}}{\sqrt{2N^{1/4}}}} = 0
$$
as desired. \end{proof}

We remark that Proposition~\ref{prop:density-zero} can be phrased unconditionally. The argument shows that a union of at most $d$ distinct $2d$-th semi-primitive cyclotomic classes of $\F_q$ together with $\{0\}$, almost surely does not contain a subspace of size $\sqrt{q}$, as $q \to \infty$. 

\subsection{An infinite family of graphs (of non-Paley type) achieving the square root upper bound.} \label{subsect:heuristic-kernel} 

Despite the density of $\mathcal{F}(N)$ being zero, we still expect that $\mathcal{F}(N)$ will contain infinitely many elements that are non-isomorphic to the Paley graph. Specializing to the case $q=p^4$ and $2d=p+1$, the proof of Proposition~\ref{prop:density-zero} can be modified to show that $\omega(PP(p^4, p+1, I))<p^2$ for a generic choice of $I$. Nonetheless, we expect that there are infinitely many exceptions of non-Paley type. This expectation is made precise in the following conjecture. 

\begin{conj}\label{conj:josh} For each prime $p\geq 3$, 
 there are exactly $\frac{p^2+3}{4}$ subsets $I\subset \{0, 1, \ldots, p\}$ with $0\in I$ and $|I|=\frac{p+1}{2}$ such that $\omega(PP(p^4, p+1, I))=p^2$. Furthermore, in these graphs, the maximum cliques containing $\{0,1\}$ are of the form $\F_p \oplus a\F_p$, where $a=g^{(p+1)k}$ and $k$ is an odd integer.
\end{conj}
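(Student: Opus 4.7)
The plan is to turn Conjecture~\ref{conj:josh} into a combinatorial question about how the $p+1$ one-dimensional $\F_p$-subspaces of a two-dimensional $\F_p$-subspace $V\subset\F_{p^4}$ distribute among the $(p+1)$-th cyclotomic classes, and then resolve that question by a character-sum analysis together with the Galois action from Theorem~\ref{theorem:even-number-cliques}. By Lemma~\ref{lem:C_0-contains-Fpt} we have $\F_p^*\subset C_0$, so writing $V=\F_p\oplus a\F_p$ with $a\in\F_{p^4}\setminus\F_p$, the nonzero elements of $V$ split into the $p+1$ cosets $\F_p^*$ and $(u+a)\F_p^*$ for $u\in\F_p$, each contained in a single cyclotomic class. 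Theorem~\ref{theorem:main-result} then shows that $V$ is a maximum clique in some $PP(p^4,p+1,I)$ if and only if the multiset $M(a):=\{0\}\cup\{\operatorname{ind}(u+a)\bmod(p+1):u\in\F_p\}$ has each value appearing with multiplicity exactly two, and in that case $I$ is the underlying set of $M(a)$.

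Next I would encode the balanced condition on $M(a)$ using the multiplicative character $\chi$ of $\F_{p^4}^*$ of order $p+1$. Since $\chi(x)=x^{(p-1)(p^2+1)}$ and $(s+at)^{p^j}=s+a^{p^j}t$ for $s,t\in\F_p$, we obtain the Frobenius-twisted formula
\[
\chi(s+at)=\frac{(s+a^{p}t)(s+a^{p^3}t)}{(s+at)(s+a^{p^2}t)},
\]
and the balanced condition becomes the requirement that the rational map $N_a\colon\mathbb{P}^1(\F_p)\to\mu_{p+1}$ given by the right-hand side is exactly $2$-to-$1$. Clearing denominators in the equation $N_a([s_1:t_1])=N_a([s_2:t_2])$ and exploiting the symmetry $(s_1,t_1)\leftrightarrow(s_2,t_2)$ should factor the resulting polynomial into pieces indexed by the possible pairings of $\mathbb{P}^1(\F_p)$; ruling out all but the pairings that extend to $\F_{p^4}$-algebraic involutions will then restrict $a$ to a one-parameter locus, which after imposing $\chi(a)=1$ I expect to identify with the coset $g^{p+1}\langle g^{2(p+1)}\rangle$ appearing in the conjecture (equivalently, the non-square coset of $C_0$ under the unique character of order $2(p+1)$ on $\F_{p^4}^*$).

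Having pinned down the valid $a$'s, the count of valid $V$'s comes from dividing the size of this locus by the $p(p-1)$-fold ambiguity $a\sim b+ca$ ($b\in\F_p$, $c\in\F_p^*$), and the passage to valid $I$'s comes from the Galois pairing $V\mapsto\phi^2(V)$ of Theorem~\ref{theorem:even-number-cliques} (combined with Conjecture~\ref{conj:2-max-cliques}, which predicts exactly two maximum cliques per $I$). The target value $\frac{p^2+3}{4}$ should then emerge after separating off the Paley-type index set $\{0,2,\ldots,p-1\}$, whose maximum clique is $\F_{p^2}$ and therefore sits outside the generic ``non-square coset'' parametrization.

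The main obstacle is the second step: verifying that $N_a$ fails to be exactly $2$-to-$1$ whenever $a$ lies outside the prescribed coset. The equation $N_a([s_1:t_1])=N_a([s_2:t_2])$ has bidegree $(2,2)$ in the two projective variables, and $N_a$ being $2$-to-$1$ on a finite set does not by itself force the existence of an algebraic involution; sporadic pairings—especially for small $p$, consistent with the counterexamples in Table~\ref{tab:evidence-main-conj}—must be excluded either by a Weil-bound argument on the associated character sum or by a direct case analysis. A secondary obstacle is the converse direction: showing that every $a$ in the prescribed coset actually yields a clique, which I expect to follow from the Gauss sum evaluations in Appendix~\ref{sect:gauss-sums} applied to the Fourier characterization of cliques in Proposition~\ref{proposition:main-result}.
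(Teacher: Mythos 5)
This statement is a conjecture in the paper, not a theorem: the authors establish it only \emph{conditionally}, proving that Conjecture~\ref{mainconj}, Conjecture~\ref{conj:2-max-cliques}, and Conjecture~\ref{conj:subspace} together imply Conjecture~\ref{conj:josh}, and they verify Conjecture~\ref{conj:subspace} only computationally for $p<100$. Your proposal reproduces essentially the same conditional skeleton --- the reduction via Lemma~\ref{lem:C_0-contains-Fpt} and Theorem~\ref{theorem:main-result} to the statement that the $p+1$ cosets $\F_p^*$, $(u+a)\F_p^*$ must fall exactly two apiece into the classes $C_j$, the recovery of $I$ from $V$, the pairing of subspaces under $x\mapsto x^{\sqrt{q}}$, and the separation of the Paley index set $\{0,2,\ldots,p-1\}$ --- and it correctly identifies the crux. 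But the crux is left open. Your second step is precisely Conjecture~\ref{conj:subspace}: showing that the Frobenius-twisted map $N_a$ is exactly $2$-to-$1$ on $\mathbb{P}^1(\F_p)$ if and only if $a$ lies in the odd coset $g^{p+1}\langle g^{2(p+1)}\rangle$. Neither of the tools you propose can close this. A Weil-type bound on the associated character sums controls the fibre sizes of $N_a$ only up to errors of order $\sqrt{p}$ per class, whereas an \emph{exact} multiplicity-two statement for every fibre is required; and the bidegree-$(2,2)$ correspondence $N_a([s_1:t_1])=N_a([s_2:t_2])$ being $2$-to-$1$ on the $p+1$ rational points genuinely does not force an algebraic involution, so ``ruling out sporadic pairings'' is exactly the open content, not a routine verification. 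The same applies to your converse direction: the Gauss-sum evaluations in Appendix~\ref{sect:gauss-sums} yield equal contributions on average, not the exact coset-by-coset count for a fixed $a$.

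Two further points. First, your argument silently needs Conjecture~\ref{mainconj} (without it one cannot rule out size-$p^2$ cliques that are not $\F_p$-subspaces, which would both break the classification of maximum cliques through $\{0,1\}$ and potentially add extra valid index sets $I$), and you explicitly invoke Conjecture~\ref{conj:2-max-cliques}; so even granting your second step, the result would remain conditional to the same degree as the paper's. Second, the normalization in your counting step is off: since $a$ is already constrained to lie in $C_0$ and $V\cap C_0=\F_p^*\cup a\F_p^*$ exactly (by Theorem~\ref{theorem:main-result}), the ambiguity in $a$ for a fixed $V$ is the $(p-1)$-fold scaling by $\F_p^*$, not a $p(p-1)$-fold affine action; dividing the locus of size $(p^2+1)(p-1)/2$ by $p(p-1)$ does not even yield an integer, whereas dividing by $p-1$ gives the paper's $(p^2+1)/2$ subspaces and hence, after pairing and adding back the Paley case, the target $\frac{p^2+3}{4}$.
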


Conjecture~\ref{conj:josh} is impractical to verify using a computer. Instead, we consider the following weaker conjecture, which can be verified in polynomial time via Algorithm~\ref{alg:josh-question} in Appendix~\ref{sect:numerical-evidence}.

\begin{conj}\label{conj:subspace}
Let $V$ be a $2$-dimensional subspace in $\F_{p^4}$, such that $1 \in V$. Then $V$ is a clique in $PP(p^4,p+1,I)$ for some $I$ if and only if $V=\F_p \oplus a\F_p$, where $a=g^{(p+1)k}$ and $k$ is an odd integer.
\end{conj}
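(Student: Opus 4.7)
I would reformulate ``$k$ odd'' intrinsically in terms of the norm map to $\F_{p^2}$, then verify both directions via an explicit Frobenius-based norm computation. Fix a multiplicative character $\chi : \F_{p^4}^* \to \mu_{p+1}$ with $\ker \chi = C_0$; the key observation is that $V$ is a clique in $PP(p^4,p+1,I)$ for some $I$ if and only if the multiset of $\chi$-values on the $p+1$ $\F_p$-lines of $V$ has every value appearing $0$ or $2$ times (well defined since $\F_p^* \subset C_0$ by Lemma~\ref{lem:C_0-contains-Fpt} forces $\chi$ to be constant on $\F_p$-lines). Applying Theorem~\ref{theorem:main-result} to the class $C_0 \supset \F_p^*$, a clique $V$ must contain exactly one non-$\F_p$ line $\F_p \cdot a$ with $a \in C_0$, so $V = \F_p \oplus a\F_p$ with $a = g^{(p+1)k}$, and the question becomes whether $k$ must be odd.

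For the reformulation, set $L = \langle g^{2(p+1)} \rangle$, the index-$2$ subgroup of $C_0$ consisting of the ``$k$ even'' elements. Using $\gcd(p-1, p^2+1) = 2$, a direct computation verifies that $L = K \cdot \F_p^*$, where $K = \ker(N_{\F_{p^4}/\F_{p^2}}) = \langle g^{p^2-1}\rangle$ is the norm-one subgroup. Hence, after rescaling within the line $\F_p \cdot a$, the problem splits into two cases: $a \in K$ (so $a^{p^2+1} = 1$), corresponding to ``$k$ even''; and $a^{p^2+1} = \gamma$ for a fixed non-square $\gamma = g^{(p+1)(p^2+1)} \in \F_p^*$, corresponding to ``$k$ odd''. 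The edge case $a \in \F_{p^2}$ is impossible when $k$ is even (since $K \cap \F_{p^2}^* = \{\pm 1\} \subset \F_p$ while we require $a \notin \F_p$), and when $k$ is odd it forces $V = \F_{p^2}$, which is a clique in the Paley graph by Theorem~\ref{EKRPaley} (with $k = (p^2+1)/2$, always odd).

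The heart of the argument is a single norm identity. Writing $a^{p^2} = \gamma/a$, we obtain
\[
N_{\F_{p^4}/\F_{p^2}}(a + \alpha) = (a + \alpha)(\gamma/a + \alpha) = (\gamma + \alpha^2) + \alpha v, \qquad v := a + \gamma/a \in \F_{p^2}.
\]
Since $(a+\alpha)/(a+\beta) \in C_0$ if and only if its $\F_{p^2}$-norm lies in $\F_p^*$, and hence is fixed by the Frobenius $x \mapsto x^p$, expanding this invariance yields
\[
(v^p - v)(\alpha - \beta)(\gamma - \alpha\beta) = 0.
\]
Since $v \in \F_p$ forces $a$ to satisfy a quadratic over $\F_p$ and hence $a \in \F_{p^2}$ (excluded), we have $v \notin \F_p$. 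Consequently, for distinct $\alpha, \beta \in \F_p^*$ we have $\chi(a+\alpha) = \chi(a+\beta)$ precisely when $\alpha\beta = \gamma$; and the same identity with $\beta = 0$ shows that no $\alpha \in \F_p^*$ satisfies $a + \alpha \in C_0$.

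The conjecture follows at once. If $k$ is even then $\gamma = 1$, and the involution $\alpha \mapsto 1/\alpha$ on $\F_p^*$ has fixed points $\pm 1$, producing singleton fibers at the distinct values $\chi(a \pm 1)$; these multiplicity-$1$ classes violate the required $2$-to-$1$ distribution, so $V$ is not a clique. If $k$ is odd then $\gamma$ is a non-square, so $\alpha^2 = \gamma$ is unsolvable and the involution $\alpha \mapsto \gamma/\alpha$ on $\F_p^*$ is fixed-point-free; consequently the $p-1$ values $\{\chi(a+\alpha)\}_{\alpha \in \F_p^*}$ form $(p-1)/2$ size-$2$ fibers, and together with the value $1 = \chi(1) = \chi(a)$ of multiplicity $2$ one obtains the required $2$-to-$1$ distribution of the $p+1$ lines, yielding a clique in $PP(p^4, p+1, I)$ for $I$ equal to the set of cyclotomic classes meeting $V$. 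The main obstacle is spotting the intrinsic reformulation that makes the square/non-square dichotomy of $\gamma$ visible; once the norm identity is in place, both directions reduce to the same M\"obius-type involution argument.
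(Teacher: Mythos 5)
You should be aware that the paper does not prove this statement: it is left as an open conjecture (Conjecture~\ref{conj:subspace}), supported only by the computational verification of Algorithm~\ref{alg:josh-question} for odd primes $p<100$. Your argument is therefore not a reproof but, as far as I can tell after checking it line by line, a complete proof of the conjecture. The reduction to the $2$-to-$1$ distribution of the $p+1$ lines over the cyclotomic classes is a correct use of Theorem~\ref{theorem:main-result} (each $C_{m_j}$ must meet a maximum clique in exactly $2(p-1)$ elements, i.e.\ exactly two $\F_p^*$-cosets, and $\F_p^*\subset C_0$ makes $\chi$ constant on lines). The identification $C_0=N_{\F_{p^4}/\F_{p^2}}^{-1}(\F_p^*)$ holds because $N(C_0)=\langle g^{(p+1)(p^2+1)}\rangle=\F_p^*$ and both sides have index $p+1$; consequently two lines lie in the same class iff the ratio of their norms is Frobenius-fixed, and your expansion
$N(a+\alpha)^pN(a+\beta)-N(a+\alpha)N(a+\beta)^p=(v^p-v)(\alpha-\beta)(\gamma-\alpha\beta)$
with $v=a+\gamma/a$ checks out. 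The identity $L=K\cdot\F_p^*$ (via $\gcd(p^2+1,p-1)=2$ and a cardinality count) correctly translates the parity of $k$ into whether $N(a)$ is a square in $\F_p^*$, which is exactly what makes the involution $\alpha\mapsto\gamma/\alpha$ fixed-point-free ($k$ odd) or not ($k$ even), and the degenerate possibilities ($v\in\F_p$, $a\in\F_{p^2}$, the line $\F_p$ itself) are all accounted for.

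Two points deserve explicit care in a write-up. First, in the fixed-point-free case you should state (as you implicitly use) that distinct pairs $\{\alpha,\gamma/\alpha\}$ land in distinct classes and that none of them lands in $C_0$; both follow from the same identity, but the $2$-to-$1$ conclusion needs them. Second, the "only if" direction needs the observation that the parity of $k$ is an invariant of the line $a\F_p$ inside $C_0$ (rescaling by $\F_p^*=\langle g^{(p+1)(p^2+1)}\rangle$ shifts $k$ by the even number $p^2+1$), so that "$k$ even" is a well-defined hypothesis to refute. Since this settles a conjecture the authors could only test numerically (and, combined with Conjectures~\ref{mainconj} and~\ref{conj:2-max-cliques}, feeds into Conjecture~\ref{conj:josh}), I would encourage you to write it up carefully and sanity-check it against the SageMath data for a few small primes.
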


The following proposition states that Conjecture~\ref{conj:subspace} is essentially equivalent to Conjecture~\ref{conj:josh} if we assume the main Conjecture~\ref{mainconj}.

\begin{prop}
Conjecture~\ref{mainconj}, Conjecture~\ref{conj:2-max-cliques}, and 
Conjecture~\ref{conj:subspace} together imply Conjecture~\ref{conj:josh}.
\end{prop}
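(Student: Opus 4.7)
The plan is to specialize to $q=p^4$ and $2d=p+1$ (so $t=1$, $r=2$), denote by $\mathcal{I}$ the collection of subsets $I\subset\{0,1,\ldots,p\}$ with $0\in I$, $|I|=(p+1)/2$, and $\omega(PP(p^4,p+1,I))=p^2$, and by $\mathcal{V}$ the collection of $2$-dimensional $\F_p$-subspaces $V\subset\F_{p^4}$ of the form $\F_p\oplus a\F_p$ with $a=g^{(p+1)k}$, $k$ odd. I will build a surjection $\Phi\colon\mathcal{V}\to\mathcal{I}$, compute its fibers via the three conjectures, and then count $|\mathcal{V}|$ independently.

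The map $\Phi$ is defined by $\Phi(V):=I_V$: Conjecture~\ref{conj:subspace} says each $V\in\mathcal{V}$ is a clique in some $PP(p^4,p+1,I)$ (a max clique since $|V|=p^2=\sqrt{q}$), and Theorem~\ref{theorem:main-result} pins down $I_V=\{j:V\cap C_j\neq\emptyset\}$ uniquely. For surjectivity, I take any $I\in\mathcal{I}$, a max clique $A$, and translate so that $0\in A$; Conjecture~\ref{mainconj} then makes $A$ an $\F_p$-subspace, and Theorem~\ref{theorem:main-result} gives some $a\in A\cap C_0$. Multiplication by $a^{-1}\in C_0$ is a graph automorphism (since $a^{-1}C_j=C_j$), so $a^{-1}A$ is a max clique containing $\{0,1\}$ that is an $\F_p$-subspace containing $\F_p$; by Conjecture~\ref{conj:subspace} it lies in $\mathcal{V}$, and $\Phi(a^{-1}A)=I$. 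For the fibers, the Paley case $I=\{0,2,\ldots,p-1\}$ gives $|\Phi^{-1}(I)|=1$ (Theorem~\ref{EKRPaley} pins $\F_{p^2}$ as the unique max clique through $\{0,1\}$); for every other $I\in\mathcal{I}$, Conjecture~\ref{conj:2-max-cliques} forces exactly two max cliques containing $\F_p=\F_{p^t}$, so $|\Phi^{-1}(I)|=2$. Summing yields $|\mathcal{V}|=2|\mathcal{I}|-1$.

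The independent count of $|\mathcal{V}|$ goes as follows. Let $S=\{g^{(p+1)k}:k\text{ odd}\}\subset C_0$, so $|S|=(p-1)(p^2+1)/2$. Using $\F_p^*=\langle g^{(p+1)(p^2+1)}\rangle\subset\langle g^{2(p+1)}\rangle$ (valid since $p^2+1$ is even), one checks $S\cap\F_p=\emptyset$ and that $S$ is $\F_p^*$-invariant. For any $V\in\mathcal{V}$, Theorem~\ref{theorem:main-result} gives $|V\cap C_0|=2(p-1)$, and since $V\cap C_0$ is $\F_p^*$-invariant with $\F_p^*\subset V\cap C_0$ accounting for one $\F_p^*$-orbit of size $p-1$, the remainder $O_V:=(V\cap C_0)\setminus\F_p^*$ is a single orbit. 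Because $S$ decomposes into $\F_p^*$-orbits inside $C_0$, the orbit $O_V$ lies either entirely in $S$ or entirely outside, and $V\in\mathcal{V}$ forces $O_V\subset S$. Conversely, any $\F_p^*$-orbit $O\subset S$ determines a unique $V=\F_p\oplus s\F_p$ (for any $s\in O$) in $\mathcal{V}$ with $O_V=O$. Thus $V\mapsto O_V$ is a bijection between $\mathcal{V}$ and the $\F_p^*$-orbits of $S$, yielding $|\mathcal{V}|=|S|/(p-1)=(p^2+1)/2$. Combining with the fiber count, $|\mathcal{I}|=(|\mathcal{V}|+1)/2=(p^2+3)/4$, and the description of max cliques containing $\{0,1\}$ is immediate from Conjectures~\ref{mainconj} and~\ref{conj:subspace}.

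The main technical hinge is the clean identification of $V\cap C_0$ as exactly two $\F_p^*$-orbits (one being the trivial $\F_p^*$). This is what lets the ``other'' orbit $O_V$ play the role of a unique $S$-orbit in $V$, and produces the bijection $\mathcal{V}\leftrightarrow(\F_p^*\text{-orbits of }S)$. Without the equal-contribution Theorem~\ref{theorem:main-result}, $V\cap C_0$ could house additional orbits, the pairing would break, and the count $|\mathcal{V}|=(p^2+1)/2$ would only give an upper bound; so the delicate ingredient in the plan is invoking Theorem~\ref{theorem:main-result} to sharpen this inequality into an equality.
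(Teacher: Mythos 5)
Your proposal is correct and follows essentially the same route as the paper: both arguments count the admissible subspaces $V=\F_p\oplus a\F_p$ (getting $(p^2+1)/2$ of them, via the $\F_p^*$-orbit structure of $\{g^{(p+1)k}:k\text{ odd}\}$, which is exactly the paper's ``different $a$ in the same $\F_p^*$-coset give the same $V$'' step, justified by the equal-contribution identity $|V\cap C_0|=2(p-1)$), and then match subspaces to index sets $I$ with fiber size $1$ over the Paley index set and $2$ elsewhere via Conjecture~\ref{conj:2-max-cliques}. Your write-up merely packages this as an explicit surjection $\Phi\colon\mathcal{V}\to\mathcal{I}$ with a fiber count; no substantive difference.
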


\begin{proof}
Suppose $I\subset \{0, 1, \ldots, p\}$ with $0\in I$ and $|I|=\frac{p+1}{2}$ such that $\omega(PP(p^4, p+1, I))=p^2$. Let $V$ be a maximum clique of size $p^2$ with $0, 1\in V$; see the beginning of the proof of Theorem~\ref{thm:clique_number} for the existence of such a clique.
Conjecture~\ref{mainconj} implies that $V$ is a $2$-dimensional $\F_p$-subspace. We apply Conjecture~\ref{conj:subspace} to express $V=\F_p\oplus a\F_p$ for some $a=g^{(p+1)k} \in \F_{p^4}$, where $k$ is an odd integer. 

Recall that $C_0=\langle g^{p+1}\rangle$ is a $(p+1)$-th cyclotomic class. Lemma~\ref{lem:C_0-contains-Fpt} states that $\F_p^* \subset C_0$. Observe that $V \cap C_0 \supset \F_p^* \cup a\F_p^*$. By Theorem~\ref{theorem:main-result}, $|V\cap C_0|=\frac{p^2-1}{(p+1)/2}=2(p-1)$. Hence, $V \cap C_0 = \F_p^* \cup a\F_p^*$. Let $V'=\F_p \oplus b\F_p$ be another clique of the same type. The same argument shows that $V'\cap C_0=\F_p^* \cup b\F_p^*$. Thus, we have $V=V'$ if and only if $b/a \in \F_p^*$. 

Given $V=\F_p^* \oplus a\F_p^*$ with $a=g^{(p+1)k}$ with $k$ odd, the number of possible $k$'s is $\frac{p^4-1}{2(p+1)}=(p^2+1)(p-1)/2.$ The argument above shows that different choices of $a$ in the same $\F_{p}^{\ast}$-coset give the same space $V$. Therefore, the number of choices for $V$ is:
$$
\frac{1}{2} \frac{(p^2+1)(p-1)}{p-1}=\frac{p^2+1}{2}.
$$

If $I=\{0, 2, 4, \ldots, p-1\}$ then $PP(p^4, p+1, I)$ is the Paley graph and the only possible $V$ in this case is $V=\F_{p^2}$. If $I\neq \{0, 2, 4, \ldots, p-1\}$, then Conjecture~\ref{conj:2-max-cliques} yields $2$ such subspaces $V$. By Theorem~\ref{theorem:main-result}, different choices of $I$ give rise to different subspaces $V$. In summary, there are exactly
$$
\frac{1}{2} \bigg(\frac{p^2+1}{2} - 1\bigg) + 1  = \frac{p^2+3}{4}
$$
many $I$ with $0\in I$ satisfying $\omega(PP(p^4, p+1, I))=p^2$. \end{proof}

\subsection{Conditional improvement on the trivial upper bound}\label{subsect:conditional-improvement}
In this subsection, we prove Theorem~\ref{thm:clique_number} and Theorem~\ref{thm: Peisert_clique_number}, which improve the previously known upper bound $\sqrt{q}$ on the clique number conditional on Conjecture~\ref{mainconj}.

The following lemma is useful for obtaining a lower bound on the character sum, as we will see in the proof of Theorem~\ref{thm:clique_number} below.

\begin{lem}\label{lem:theta-sum-lb}
Let $d\geq 2$ be a positive integer and $\theta$ be a primitive $2d$-th root of unity. Given a subset $I=\{m_1, m_2, \ldots,m_d\}\subset \{0, 1, \ldots, 2d-1\}$, there exists an integer $1\leq k\leq 2d-1$ such that $|\sum_{j=1}^{d} \theta^{k m_j}|>\sqrt{d/2}$. 
\end{lem}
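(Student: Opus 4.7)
The plan is to use the standard second-moment (Parseval/orthogonality) argument on the character sums, then isolate the trivial contribution at $k=0$ and average over the remaining nonzero $k$.

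First, set $S_k := \sum_{j=1}^{d} \theta^{km_j}$ for $0 \leq k \leq 2d-1$, and compute $\sum_{k=0}^{2d-1}|S_k|^2$ by expanding $|S_k|^2 = S_k\overline{S_k}$ and switching the order of summation:
\[
\sum_{k=0}^{2d-1} |S_k|^2 = \sum_{i=1}^{d}\sum_{j=1}^{d} \sum_{k=0}^{2d-1} \theta^{k(m_i - m_j)}.
\]
Since $\theta$ is a primitive $2d$-th root of unity, the inner sum equals $2d$ when $m_i \equiv m_j \pmod{2d}$ and $0$ otherwise. Because the elements of $I$ are distinct modulo $2d$, only the diagonal $i = j$ contributes, and I obtain $\sum_{k=0}^{2d-1}|S_k|^2 = 2d \cdot d = 2d^2$.

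Next, peel off the trivial term: $S_0 = d$, so $|S_0|^2 = d^2$, and therefore
\[
\sum_{k=1}^{2d-1} |S_k|^2 = 2d^2 - d^2 = d^2.
\]
Averaging over the $2d - 1$ nonzero values of $k$ produces an index $k^{\ast}$ with $|S_{k^\ast}|^2 \geq \frac{d^2}{2d-1}$. A direct comparison gives $\frac{d^2}{2d-1} > \frac{d^2}{2d} = \frac{d}{2}$ (equivalent to $d > 0$), which yields the strict inequality $|S_{k^\ast}| > \sqrt{d/2}$ claimed in the lemma.

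There is no real obstacle here: everything reduces to orthogonality of characters of $\Z/2d\Z$ followed by a one-line pigeonhole. The only point requiring a touch of care is the strictness of the inequality, which is guaranteed precisely by working with $2d - 1$ (rather than $2d$) nonzero frequencies after subtracting the $k=0$ contribution.
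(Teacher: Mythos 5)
Your proof is correct and follows exactly the same route as the paper: compute the second moment $\sum_{k=0}^{2d-1}|S_k|^2 = 2d^2$ by orthogonality, subtract the $k=0$ contribution $d^2$, and average over the remaining $2d-1$ frequencies to get $\max_k |S_k|^2 \geq d^2/(2d-1) > d/2$. Nothing to add.
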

\begin{proof}
We have
$$
\sum_{k=0}^{2d-1}\left|\sum_{j=1}^{d} \theta^{km_j}\right|^2=\sum_{k=0}^{2d-1} \sum_{1\leq j,\ell \leq d} \theta^{k(m_j-m_{\ell})}=\sum_{1\leq j,\ell \leq d} \sum_{k=0}^{2d-1} \theta^{k(m_j-m_{\ell})}=2d^2.
$$
It follows that
\[\max_{1 \leq k \leq 2d-1}\left|\sum_{j=1}^{d} \theta^{km_j}\right| \geq \sqrt{\frac{2d^2-d^2}{2d-1}}=\sqrt{\frac{d^2}{2d-1}}>\sqrt{\frac{d}{2}}. \qedhere
\]
\end{proof}

The following theorem is a consequence of the main result in a recent paper by Reis \cite{Reis}. It states that there will be a lot of cancellations in a character sum over a generic subspace $V$ of $\F_{q}$ provided that $V$ is not the subfield $\F_{\sqrt{q}}$ and $q$ is sufficiently large. 

\begin{thm}[\cite{AY}*{Corollary 3.6}]\label{charsumcor}
Let $n$ be an integer such that $n \geq 2$, and $q$ an odd prime power. Let $V \subseteq \mathbb{F}_{q^{{2n}}}$ be an $\mathbb{F}_{q}$-subspace of dimension $n$, with $1 \in V$, and $V \neq \mathbb{F}_{q^{n}}$. Then for any non-trivial multiplicative character $\chi$ of $\mathbb{F}_{q^{2n}}$,
\begin{equation} \label{charsum}
\left|\sum_{x \in V} \chi(x)\right|< \frac{2n}{\sqrt{q}} \cdot |V|.    
\end{equation}
\end{thm}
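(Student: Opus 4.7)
The plan is to deduce this bound from the main theorem of Reis~\cite{Reis} on multiplicative character sums of $\F_q$-linearized polynomials, exactly as carried out in \cite{AY}*{Corollary 3.6}. The overarching strategy is to recast the sum over the subspace $V$ as a sum of $\chi$ evaluated at an $\F_q$-linearized polynomial, and then invoke Reis's Weil-type estimate, with the hypothesis $V\neq\F_{q^n}$ ensuring that the nondegeneracy assumptions of Reis's theorem are satisfied.

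First, I would realize $V$ through a linearized polynomial. Fix an $\F_q$-basis $\{1=e_1,e_2,\ldots,e_n\}$ of $V$ (using $1\in V$), and fix an $\F_q$-basis $\{\alpha_1,\ldots,\alpha_n\}$ of $\F_{q^n}\subset\F_{q^{2n}}$. The $\F_q$-linear map $T\colon\F_{q^n}\to\F_{q^{2n}}$ sending $\alpha_i\mapsto e_i$ is represented by an $\F_q$-linearized polynomial $L(x)=\sum_{i=0}^{n-1}c_i x^{q^i}\in\F_{q^{2n}}[x]$ of $q$-degree at most $n-1$, whose image is exactly $V$. Consequently,
\[
\sum_{x\in V}\chi(x)=\sum_{y\in\F_{q^n}}\chi(L(y)).
\]

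Next, I would apply Reis's theorem, which bounds $|\sum_{y\in\F_{q^n}}\chi(L(y))|$ by a constant multiple of $\sqrt{q^n}$ times a degree parameter of $L$, provided $L$ does not factor through the Frobenius in a way that collapses $\chi\circ L$ along cosets of a large subgroup. The degenerate case where Reis's bound fails corresponds precisely to $V=L(\F_{q^n})$ coinciding with the unique intermediate subfield $\F_{q^n}\subset\F_{q^{2n}}$; the hypothesis $V\neq\F_{q^n}$ excludes exactly this case. Substituting $|V|=q^n$ and simplifying gives $|\sum_{x\in V}\chi(x)|<(2n/\sqrt{q})|V|$.

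The main obstacle is the careful bookkeeping required to translate Reis's nondegeneracy assumption into the clean geometric statement ``$V$ is not the subfield $\F_{q^n}$,'' and to extract the explicit constant $2n/\sqrt{q}$ with its correct dependence on $n$. This requires controlling the $q$-degree of $L$ in terms of $\dim_{\F_q}V=n$ and the subfield lattice of $\F_{q^{2n}}$. A subtle point is that the degeneracy in Reis's framework is phrased for a single linearized polynomial, while the natural invariant on our side is the $\F_q$-subspace $V$; matching these is the content of the derivation in \cite{AY}*{Section 3}.
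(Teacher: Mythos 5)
The paper does not actually prove Theorem~\ref{charsumcor}: it is imported verbatim from \cite{AY}*{Corollary 3.6} with only the remark that it follows from the main result of Reis \cite{Reis}, and your proposal traces exactly that chain of citations, correctly isolating the one substantive point — that the hypothesis $V\neq\F_{q^n}$ is precisely what excludes the degenerate subspaces (those of the form $\alpha K$ for a subfield $K$, which for $1\in V$ and $\dim_{\F_q}V=n$ forces $V=\F_{q^n}$) on which Reis's estimate fails. So this is the same approach as the paper's, to the extent the paper has one. One caution: Reis's estimate is formulated for character sums attached to $\F_q$-subspaces of the \emph{full} field $\F_{q^{2n}}$ (equivalently, for $\sum_{z\in\F_{q^{2n}}}\chi(F(z))$ with $F$ linearized over $\F_{q^{2n}}$), whereas your parametrization $\sum_{y\in\F_{q^n}}\chi(L(y))$ sums over the intermediate field while $\chi$ lives on the big field; this mismatch is harmless only because the parametrization is an unnecessary detour — the subspace form of Reis's theorem applies to $V$ directly, which is how \cite{AY} proceeds.
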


In other words, if the absolute value of the character sum over a subset $S \subset \F_q$ with size $\sqrt{q}$ is too large, then we can conclude that either $S$ is not a subspace or $S$ is the subfield $\F_{\sqrt{q}}$. We remark that the second alternative (the subfield case) was one of the main tools used in \cite{AY} to establish a new proof of Theorem~\ref{EKRPaley} as well as its generalization.
Next, we use the first alternative to prove Theorem~\ref{thm:clique_number}. 

\begin{proof} [Proof of Theorem~\ref{thm:clique_number}]
Assume, to the contrary, that $\omega(X)=\sqrt{q}$ and $p^t > 10.2 r^2 d$. Without loss of generality, we may assume $m_1=0$. Since $X$ is a Cayley graph, we can restrict attention to maximum cliques containing $0$. If $A$ is a maximum clique in $X$ containing $0$, Theorem~\ref{theorem:main-result} implies that $C_0 \cap A \neq \emptyset$ and thus we can further assume that $1 \in A$. Indeed, any $y\in C_0 \cap A$ yields another maximum clique $B=y^{-1}A$ containing both $0$ and $1$.  By Lemma~\ref{lem:subfield-clique},  $A \neq \F_{\sqrt{q}}$.

By Lemma \ref{lem:theta-sum-lb}, we can find $1 \leq k \leq 2d-1$, such that $|\sum_{j=1}^{d} \theta^{km_j}| >\sqrt{d/2}$. Let $\chi$ be the multiplicative character of $\F_q$ such that $\chi(g)=\theta^k$. Then $\chi$ is a nontrivial character with order dividing $2d$.  Suppose that $A$ is a subspace over $\F_{p^t}$; then by Theorem~\ref{charsumcor} and Theorem~\ref{theorem:main-result}, we have
\begin{equation}\label{charsumcondition}
\frac{\sqrt{q}-1}{d} \left|\sum_{j=1}^{d} \theta^{km_j}\right| = \left|\sum_{j=1}^{d} \sum_{x\in C_{m_j}} \chi(x)\right| = \left|\sum_{x \in A} \chi(x)\right| < \frac{2r}{\sqrt{p^t}} \sqrt{q},    
\end{equation}
Thus,
$$
\frac{\sqrt{q}-1}{\sqrt{2d}} \leq \frac{\sqrt{q}-1}{d} \left|\sum_{j=1}^{d} \theta^{km_j}\right| < \frac{2r}{\sqrt{p^t}} \sqrt{q}.
$$
However, the above inequality implies $p^t \leq 10.2 r^2 d$, a contradiction. This finishes the proof.
\end{proof}

While Theorem~\ref{thm:clique_number} is stated conditionally in terms of the clique number of certain Cayley graphs, it can also be phrased unconditionally with additive combinatorics flavor as follows.

\begin{cor}\label{cor:clique-number-additive}
Let $q=p^{2rt}$, where $r$ is even, and $t$ is the smallest positive integer such that $p^t \equiv -1 \pmod {2d}$. Let $D$ be the union of at most $d$ distinct $2d$-th cyclotomic classes of $\F_q$. Assume that $D$ contains at least one square and one non-square element. If $p^t>10.2r^2d$, then there is no $\F_{p^t}$-subspace $V$ of $\F_q$, with dimension at least $r$, such that $V \subset D \cup \{0\}$.
\end{cor}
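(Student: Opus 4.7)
The corollary is essentially an unconditional restatement of Theorem~\ref{thm:clique_number} with the subspace structure supplied as hypothesis rather than derived from Conjecture~\ref{mainconj}. The plan is to realize $V$ as a maximum clique in a semi-primitive pseudo-Paley graph, normalize so that $1\in V$, and then run the same character-sum contradiction that drove the proof of Theorem~\ref{thm:clique_number}.

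I would argue by contradiction. Assume $V\subset D\cup\{0\}$ is an $\F_{p^t}$-subspace of dimension at least $r$, where $D=C_{m_1}\cup\cdots\cup C_{m_{d'}}$ is a union of $d'\leq d$ distinct $2d$-th cyclotomic classes. Padding $D$ by any $d-d'$ further $2d$-th cyclotomic classes produces an index set $I$ of size $d$, and I would set $X'=PP(q,2d,I)$; the Cayley graph $\operatorname{Cay}(\F_q^+,D)$ is then a spanning subgraph of $X'$. Since $V-V=V$, the subspace $V$ is a clique in $X'$. Theorem~\ref{theorem:main-result} gives $\omega(X')\leq\sqrt{q}$, while the dimension hypothesis forces $|V|\geq p^{rt}=\sqrt{q}$, so $|V|=\sqrt{q}$ and $\dim_{\F_{p^t}}V=r$. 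The equal-distribution conclusion of Theorem~\ref{theorem:main-result} applied to $V$ yields $|V\cap C_{m_j}|=(\sqrt{q}-1)/d>0$ for every $1\leq j\leq d$; combined with $V\cap C_{m_j}=\emptyset$ for the padding indices $j>d'$, this forces $d'=d$, so no padding was actually needed and $X'=\operatorname{Cay}(\F_q^+,D)$.

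Next I would translate so that $1\in V$. After relabeling so that $0\in I$, I would pick any $v_0\in V\cap C_0$ (nonempty by the previous step) and replace $V$ and $D$ by $v_0^{-1}V$ and $v_0^{-1}D$. The new connection set is a union of $d$ distinct $2d$-th cyclotomic classes indexed by $I-j_0\pmod{2d}$ if $v_0\in C_{j_0}$, and the hypothesis that $D$ contains both a square and a non-square is preserved, since multiplication by $v_0^{-1}$ either preserves or globally swaps squares and non-squares. In particular the shifted index set is neither $\{0,2,\ldots,2d-2\}$ nor $\{1,3,\ldots,2d-1\}$, so Lemma~\ref{lem:subfield-clique} guarantees $V\neq\F_{\sqrt{q}}$.

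The final step would recycle the character-sum contradiction from the proof of Theorem~\ref{thm:clique_number}: choose $1\leq k\leq 2d-1$ via Lemma~\ref{lem:theta-sum-lb} with $\bigl|\sum_{j=1}^{d}\theta^{km_j}\bigr|>\sqrt{d/2}$, let $\chi$ be the multiplicative character of $\F_q$ with $\chi(g)=\theta^k$, and note that $\chi$ is nontrivial of order dividing $2d$, hence constant on each $2d$-th cyclotomic class. Theorem~\ref{theorem:main-result} then yields $\sum_{x\in V}\chi(x)=\tfrac{\sqrt{q}-1}{d}\sum_{j=1}^{d}\theta^{km_j}$, while Theorem~\ref{charsumcor} applied to $V$ (an $\F_{p^t}$-subspace of dimension $r$ in $\F_{(p^t)^{2r}}=\F_q$, containing $1$, and distinct from $\F_{\sqrt{q}}$) gives $\bigl|\sum_{x\in V}\chi(x)\bigr|<\tfrac{2r}{\sqrt{p^t}}\sqrt{q}$. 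The resulting inequality $(\sqrt{q}-1)/\sqrt{2d}<2r\sqrt{q}/\sqrt{p^t}$ collapses to $p^t\leq 10.2 r^2 d$, contradicting the hypothesis. The main step requiring care is the forced equality $d'=d$ extracted from Theorem~\ref{theorem:main-result}; once $V$ has been identified as a maximum clique in a full semi-primitive pseudo-Paley graph, the remaining argument is exactly the unconditional portion of the proof of Theorem~\ref{thm:clique_number}, with the subspace hypothesis replacing the invocation of Conjecture~\ref{mainconj}.
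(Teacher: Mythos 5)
Your proposal is correct and is essentially the proof the paper intends: the corollary is presented without a separate argument as the unconditional rephrasing of Theorem~\ref{thm:clique_number}, and your write-up is exactly the unconditional portion of that proof (Lemma~\ref{lem:theta-sum-lb}, Theorem~\ref{theorem:main-result}, Lemma~\ref{lem:subfield-clique}, and Theorem~\ref{charsumcor}), with the subspace hypothesis replacing Conjecture~\ref{mainconj}. The only additions are bookkeeping the paper leaves implicit — padding to exactly $d$ classes and ruling out $d'<d$ via the equal-distribution conclusion, and translating by $v_0^{-1}$ to arrange $1\in V$ while preserving the square/non-square hypothesis — and both are handled correctly.
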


When the set $I$ has a simple structure, the lower bound on $p$ in Theorem~\ref{thm:clique_number} can be further improved. We illustrate this by analyzing the case of generalized Peisert graphs, where we can explicitly compute the sum of roots of unity.

\begin{cor}\label{cor:generalized-Peisert}
Let $q=p^{2rt}$, where $r$ is even and $t$ is the smallest positive integer such that $p^t \equiv -1 \pmod {2d}$. Assuming Conjecture \ref{mainconj}, the generalized Peisert graph $GP^*(q,2d)$ has clique number less than $\sqrt{q}$ when $p^t\geq 12.5r^2$.
\end{cor}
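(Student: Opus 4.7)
The plan is to adapt the proof of Theorem~\ref{thm:clique_number} to the specific case $I=\{0,1,\ldots,d-1\}$, where the exponential sum $\left|\sum_{j=1}^{d}\theta^{km_j}\right|$ can be computed explicitly via a geometric series. This will yield a much sharper lower bound than the generic one in Lemma~\ref{lem:theta-sum-lb}, and it is precisely what allows the $d$-dependence in the hypothesis on $p^{t}$ to be eliminated.

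Suppose for contradiction that $\omega(GP^{*}(q,2d))=\sqrt{q}$. Since $GP^{*}(q,2d)$ is a Cayley graph and $0\in I$, one may translate and then multiply by an element of $C_0\cap A$ (nonempty by Theorem~\ref{theorem:main-result}) to assume that a maximum clique $A$ contains $\{0,1\}$. Invoking Conjecture~\ref{mainconj}, $A$ is an $\F_{p^t}$-subspace of $\F_q$; and because $d\geq 2$ forces $I\neq\{0,2,\ldots,2d-2\}$, Lemma~\ref{lem:subfield-clique} rules out $A=\F_{\sqrt{q}}$, so $A$ meets all the hypotheses of Theorem~\ref{charsumcor}.

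Let $\theta=e^{\pi i/d}$ be a primitive $2d$-th root of unity, and let $\chi$ be the multiplicative character on $\F_q$ determined by $\chi(g)=\theta$. Since $m_j=j-1$ for $j=1,\ldots,d$, taking $k=1$ and using $\theta^d=-1$ collapses the sum to a geometric series,
$$\sum_{j=0}^{d-1}\theta^{j}=\frac{\theta^{d}-1}{\theta-1}=\frac{-2}{\theta-1}, \qquad \text{so} \qquad \left|\sum_{j=0}^{d-1}\theta^{j}\right|=\frac{1}{\sin(\pi/(2d))}>\frac{2d}{\pi},$$
via $\sin(x)<x$ for $x>0$. This lower bound is linear in $d$, a substantial improvement over the $\sqrt{d/2}$ estimate from Lemma~\ref{lem:theta-sum-lb}, and is the crux of the refinement.

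Plugging this into the derivation in Theorem~\ref{thm:clique_number}, Theorem~\ref{theorem:main-result} combined with Theorem~\ref{charsumcor} yields
$$\frac{2(\sqrt{q}-1)}{\pi}<\frac{\sqrt{q}-1}{d}\cdot\frac{1}{\sin(\pi/(2d))}=\left|\sum_{x\in A}\chi(x)\right|<\frac{2r\sqrt{q}}{\sqrt{p^t}},$$
which rearranges to $\sqrt{p^t}<\pi r/(1-1/\sqrt{q})$. Since $r$ is even one has $r\geq 2$, and the hypothesis $p^t\geq 12.5\,r^2$ forces $p^t\geq 50$; hence $\sqrt{q}=p^{rt}\geq(p^t)^2\geq 2500$, so $\pi r/(1-1/\sqrt{q})\leq \pi r\cdot(2500/2499)<\sqrt{12.5}\,r\leq\sqrt{p^t}$, a contradiction. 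The only substantive step is the geometric-series computation above; the rest is routine bookkeeping, and the constant $12.5=5^2/2$ is chosen precisely to leave enough slack to absorb the loss from $\sin(x)<x$ and from the factor $\sqrt{q}/(\sqrt{q}-1)$. The main obstacle is minor: checking that $\sqrt{12.5}>\pi\cdot 2500/2499$, which is immediate numerically.
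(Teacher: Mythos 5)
Your proposal is correct and follows essentially the same route as the paper: reduce to a maximum clique containing $\{0,1\}$, invoke Conjecture~\ref{mainconj} and Lemma~\ref{lem:subfield-clique} to apply Theorem~\ref{charsumcor}, and evaluate $\sum_{j=0}^{d-1}\theta^{j}$ as a geometric series to get the lower bound $2d/\pi$ (your $|1-\theta|=2\sin(\pi/(2d))$ with $\sin x<x$ is the same estimate as the paper's $\cos x\geq 1-x^2/2$). The only cosmetic difference is the final numerical step, where you use $\sqrt{q}\geq 2500$ extracted from the hypothesis $p^t\geq 12.5r^2$ instead of the paper's cruder $q\geq 81$; both close the argument.
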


\begin{proof}
We will assume that $\omega(GP^{\ast}(q, 2d))=\sqrt{q}$ and show that $p^t < 12.5r^2$ must hold. Recall that $GP^{\ast}(q, 2d)=PP(q,2d,I)$, where $I=\{0,1,\ldots, d-1\}$; see Example~\ref{ex:pp}.

Let $\theta=e^{2\pi i/2d}=e^{i \pi/d}$. Let $\chi$ be the multiplicative character of $\F_q$ such that $\chi(g)=\theta$. We have
$$
\sum_{j=1}^{d} \theta^{m_j}=\sum_{j=1}^{d}\theta^{j-1}=\frac{1-\theta^d}{1-\theta}=\frac{2}{1-\theta}.
$$
Using the inequality $\cos x \geq 1-\frac{x^2}{2}$ for all real $x$, we have
 $$|1-\theta|=\sqrt{2-2 \cos \frac{\pi}{d}} \leq \sqrt{2-2\bigg(1-\frac{\pi^2}{2d^2}\bigg)}=\frac{\pi}{d},$$
and so,
\begin{equation}\label{eq:char-sum-generalized-peisert}
\bigg|\frac{2}{1-\theta}\bigg| \geq  \frac{2d}{\pi}.
\end{equation}
Combining the inequalities \eqref{charsumcondition} and \eqref{eq:char-sum-generalized-peisert} yields
$$
\frac{\sqrt{q}-1}{d} \cdot \frac{2d}{\pi} \leq  \frac{\sqrt{q}-1}{d} \left|\sum_{j=1}^{d} \theta^{m_j}\right| <  \frac{2r}{\sqrt{p^t}} \sqrt{q}.
$$
This last inequality rearranges into:
\begin{equation} \label{125r2}
\sqrt{p^t} < \frac{r\pi \sqrt{q}}{\sqrt{q}-1} \ \ \Rightarrow \ \ p^t < \frac{r^2 \pi^2 q}{(\sqrt{q}-1)^2} \leq \left(\frac{81}{64}\right)r^2\pi^2< 12.5r^2
\end{equation}
since $q \geq 81$. \end{proof}

To prove Theorem~\ref{thm: Peisert_clique_number}, we apply the above corollary and then use SageMath~\cite{sagemath} to verify the cases when $q$ is small. 

\begin{proof} [Proof of Theorem~\ref{thm: Peisert_clique_number}] We assume the clique number is $\sqrt{q}$ and apply the contrapositive of Corollary~\ref{cor:generalized-Peisert} in the special case $r=2$ to conclude that $p^t < 50$. Note that when $p^t \geq 41$, $q=p^{4t} \geq 41^4$, and inequality~\eqref{125r2} implies that
$$
p^t < 4\pi^2\bigg(\frac{1681}{1680}\bigg)^2 <40,
$$
a contradiction. Therefore, 
$$p^t \in \{5,7,9,11,13, 17,19,23,25, 27, 29,31,37\}.
$$
Recall that $p^t \equiv -1 \pmod {2d}$ and $d \geq 2$. However, using Algorithm~\ref{alg:clique-number} (described in Appendix~\ref{sect:numerical-evidence}), we verified that the corresponding graph for each choice of $(p^t,d)$ in the following list has clique number less than $\sqrt{q}=p^{2t}$ leading to a contradiction:
\begin{align*}
\{& (5,3), (7,2), (7,4), (9,5), (11,2), (11,3), (11,6), (13,7), (17,3), (17,9),\\
&(19,5), (19,10), (23,2), (23,3), (23,4), (23,6), (23,12), (25, 13), (27, 7),\\
&(27, 14), (29, 3), (29,5), (29,15), (31, 2), (31,4), (31,8), (31,16), (37, 19)\}.     
\end{align*}
The largest case $(p^t, d)=(37, 19)$ took about 16 hours in SageMath. 
\end{proof}

\section*{Acknowledgements}
The authors thank Gabriel Currier, Greg Martin, Sergey Goryainov, Zinovy Reichstein, and J\'ozsef Solymosi for helpful discussions. The authors thank the anonymous referees for their valuable comments and suggestions. During the preparation of this manuscript, the first author was supported by a postdoctoral research fellowship from the University of British Columbia and the NSERC PDF award.

\bibliographystyle{abbrv}
\bibliography{biblio.bib}

\appendix

\section{Gauss sums and character sums}\label{sect:gauss-sums}

In this section, we work with Gauss sums and character sums over finite fields. We refer to \cite{BEW98} and \cite{LN}*{Chapter 5} for a detailed discussion. This appendix contains preparatory material for Appendix~\ref{sect:proof-of-main-prop} on the proof of Proposition~\ref{proposition:main-result}.

Let $\chi$ be a multiplicative character of $\F_q$, and $\psi$ an additive character of $\F_q$. As usual, we extend $\chi$ to be defined on $\F_q$ by setting $\chi(0)=0$. The {\em Gauss sum} associated to $\chi$ and $\psi$ is defined to be $G(\chi, \psi)=\sum_{ c \in \F_q} \chi(c) \psi(c).$
When $\psi$ is the canonical additive character $e_p\big(\Tr_{\F_q}(\cdot)\big)$, we use $G(\chi)$ to denote $G(\chi,\psi)$. The first nontrivial example is when $\chi$ is a quadratic character; in this case, the Gauss sum is well understood according to the following classical theorem.

\begin{thm}[\cite{BEW98}*{Theorem 11.5.4}] \label{quad}
Let $\F_q$ be a finite field with $q=p^s$, where $p$ is an odd prime, and $s$ is a positive integer. Let $\chi$ be the quadratic character of $\F_q$. Then
$$G(\chi)=\begin{cases}
(-1)^{s-1}\sqrt{q},  \quad \quad \text{if } p \equiv 1 \pmod 4,\\
(-1)^{s-1}i^s\sqrt{q},  \quad \text{ if } p \equiv 3 \pmod 4.
\end{cases}$$
\end{thm}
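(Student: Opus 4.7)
The plan is to reduce the general case $q = p^s$ to the classical quadratic Gauss sum over the prime field $\F_p$ via the Hasse--Davenport lifting relation, after first pinning down $G(\chi)$ up to a sign by elementary manipulations. For any nontrivial multiplicative character $\chi$ of $\F_q$, expanding $G(\chi)\overline{G(\chi)}$ as a double sum and applying orthogonality of the additive character gives $|G(\chi)|^2 = q$. A substitution $c \mapsto -c/b$ inside $G(\chi) G(\chi^{-1})$ yields $G(\chi) G(\chi^{-1}) = \chi(-1)\, q$. For the quadratic character $\chi$ we have $\chi = \chi^{-1}$, so these identities combine to give $G(\chi)^2 = \chi(-1)\, q$. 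Since $-1$ is a square in $\F_q$ precisely when $q \equiv 1 \pmod 4$, this already forces $G(\chi) \in \{\pm \sqrt{q}\}$ in that case and $G(\chi) \in \{\pm i\sqrt{q}\}$ otherwise.

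The next step is to reduce the sign determination to the prime field. The quadratic character $\chi$ of $\F_q$ factors as $\chi = \chi_0 \circ N_{\F_q / \F_p}$, where $\chi_0$ is the quadratic character of $\F_p$; this follows from the identity $x^{(q-1)/2} = \bigl(N_{\F_q/\F_p}(x)\bigr)^{(p-1)/2}$. The Hasse--Davenport relation then gives
\[
G(\chi) = (-1)^{s-1} G(\chi_0)^s.
\]
Granting the classical evaluation $G(\chi_0) = \sqrt{p}$ when $p \equiv 1 \pmod 4$ and $G(\chi_0) = i\sqrt{p}$ when $p \equiv 3 \pmod 4$, substitution yields $G(\chi) = (-1)^{s-1} p^{s/2}$ in the first case and $G(\chi) = (-1)^{s-1} i^s p^{s/2}$ in the second, matching the two cases of the statement.

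The main obstacle is the sign determination for $G(\chi_0)$ over $\F_p$, which is the genuine analytic content of the theorem; the algebraic identities above are sign-blind. Among the classical approaches, I would favour Schur's eigenvalue analysis of the $p \times p$ discrete Fourier transform matrix $M_{jk} = p^{-1/2} e_p(jk)$: rewriting its trace via the elementary fact $\#\{x \in \F_p : x^2 = a\} = 1 + \chi_0(a)$ shows $\operatorname{tr}(M) = G(\chi_0)/\sqrt{p}$, while $M^4 = I$ forces the eigenvalues to be fourth roots of unity whose multiplicities can be read off from $\operatorname{tr}(M^2)$ and a symmetry/signature count. An alternative route is Dirichlet's proof via Poisson summation applied to a theta function; in any event, this final sign determination is where the real work lies, and every other step in the argument is essentially formal.
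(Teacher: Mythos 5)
This theorem is not proved in the paper at all: it is imported by citation from Berndt--Evans--Williams \cite{BEW98}*{Theorem 11.5.4}, so there is no ``paper's proof'' to compare against. Your outline is the standard route to this classical result and is correct: the identities $|G(\chi)|^2=q$ and $G(\chi)G(\chi^{-1})=\chi(-1)q$ pin down $G(\chi)$ up to sign; the quadratic character of $\F_q$ is indeed the composition of the norm $N_{\F_q/\F_p}$ with the quadratic character $\chi_0$ of $\F_p$ (via $x^{(q-1)/2}=N(x)^{(p-1)/2}$); the Hasse--Davenport lifting relation gives $G(\chi)=(-1)^{s-1}G(\chi_0)^s$; and substituting Gauss's evaluation of $G(\chi_0)$ yields exactly the two displayed cases. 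The only caveat is that the genuinely hard step --- the sign of the prime-field quadratic Gauss sum --- is left as a sketch; your proposed method (Schur's analysis of the DFT matrix $M$ with $M^4=I$, $\operatorname{tr}(M)=G(\chi_0)/\sqrt{p}$, and eigenvalue multiplicities determined from $\operatorname{tr}(M^2)$ and $\det M$) is a valid classical way to complete it, but as written the argument is an outline rather than a full proof of that step. Since the paper itself treats the whole theorem as a black box, this level of detail is entirely appropriate.
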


\begin{cor}\label{quadevenr}
Suppose $p$ is a prime such that $p^{t}\equiv -1\pmod{2d}$ with $t$ minimal. Let $q=p^{2rt}$, where $r$ is even. If $\chi$ be the quadratic character of $\F_q$, then $G(\chi)=-\sqrt{q}$. 
\end{cor}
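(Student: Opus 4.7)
The plan is to apply Theorem~\ref{quad} directly with $s = 2rt$, using the hypothesis that $r$ is even to control the signs. Write $q = p^s$ with $s = 2rt$. Since $r$ is even, we have $4 \mid s$; in particular $s$ is even so $s-1$ is odd, and hence $(-1)^{s-1} = -1$.

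I would split into the two cases of Theorem~\ref{quad}. If $p \equiv 1 \pmod{4}$, then $G(\chi) = (-1)^{s-1}\sqrt{q} = -\sqrt{q}$, which is the desired conclusion. If $p \equiv 3 \pmod{4}$, then $G(\chi) = (-1)^{s-1} i^{s} \sqrt{q}$; since $4 \mid s$ we have $i^{s} = 1$, and the same calculation gives $G(\chi) = -\sqrt{q}$.

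There is no real obstacle: the hypothesis that $r$ is even is exactly what is needed to make both $(-1)^{s-1} = -1$ and $i^{s} = 1$, so the two cases of Theorem~\ref{quad} collapse to the single value $-\sqrt{q}$. The extra hypothesis that $t$ is minimal with $p^{t} \equiv -1 \pmod{2d}$ plays no role here; it is simply inherited from the standing notation for semi-primitive pseudo-Paley graphs.
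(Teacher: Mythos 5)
Your proof is correct and follows essentially the same route as the paper: both write $q=p^s$ with $s=2rt$, use that $r$ even forces $4\mid s$ so $i^s=1$ and $(-1)^{s-1}=-1$, and then read off $G(\chi)=-\sqrt{q}$ from Theorem~\ref{quad}. Your version just spells out the two congruence cases that the paper leaves implicit.
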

\begin{proof}
We can write $q=p^{s}$ where $s=2rt$. Note that $i^{2rt}=1$ since $r$ is even. The corollary follows from Theorem \ref{quad}.
\end{proof}

Stickelberger's theorem (see, for example, \cite{BEW98}*{Theorem 11.6.3}) provides an explicit formula to compute semi-primitive Gauss sums. The original proof by Stickelberger \cite{Sti90} was based on algebraic geometry. 

\begin{thm}[Stickelberger's theorem] \label{forr} 
Let $p$ be an odd prime and $d>2$ be an integer. Suppose that there exists a
positive integer $t$ such that $p^t\equiv -1\pmod{d}$, with $t$ chosen to be minimal. Let $\chi$ be a multiplicative character of $\mathbb{F}_{p^v}$ with order $d$.  Then $v=2ts$ for some positive integer $s$, and
$
p^{-v/2}G(\chi)=(-1)^{s-1+(p^{t}+1)s/d}.
$
\end{thm}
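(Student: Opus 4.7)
The plan is to split the theorem into three parts: determining $v$, reducing the Gauss sum evaluation to a base case via Hasse--Davenport, and pinning down the sign in the base case. For the first part, since $\chi$ has order exactly $d$ on $\F_{p^v}^*$, we need $d \mid p^v - 1$; the hypothesis that $t$ is the minimal integer with $p^t \equiv -1 \pmod d$ says precisely that $2t$ is the multiplicative order of $p$ modulo $d$, so $2t \mid v$, forcing $v = 2ts$ for some positive integer $s$.

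For the second part, I would observe that every character $\chi$ of order $d$ on $\F_{p^{2ts}}^*$ arises uniquely as $\chi_0 \circ N$ for some character $\chi_0$ of order $d$ on $\F_{p^{2t}}^*$, where $N$ is the norm map (a counting argument: both character groups contain exactly $\phi(d)$ characters of order $d$, and lifting via the norm is injective). The Hasse--Davenport lifting relation then yields $G(\chi) = (-1)^{s-1} G(\chi_0)^s$, so the general formula reduces to the base case $s = 1$, where the target is $G(\chi_0) = (-1)^{(p^t+1)/d}\, p^t$.

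For the base case, I would first establish $G(\chi_0) \in \{\pm p^t\}$ purely from semi-primitivity. Substituting $c \mapsto c^{p^t}$ in $G(\chi_0) = \sum_c \chi_0(c) \psi(c)$ and exploiting Frobenius-invariance of $\Tr$ together with $\chi_0^{p^t} = \overline{\chi_0}$ gives $G(\chi_0) = G(\overline{\chi_0})$. The usual conjugation identity $\overline{G(\chi_0)} = \chi_0(-1) G(\overline{\chi_0})$ then shows $G(\chi_0)$ is real once we check $\chi_0(-1) = 1$; this follows because $\chi_0(-1) = (-1)^{(p^{2t}-1)/d}$ and the factorization $p^{2t}-1 = (p^t-1)(p^t+1)$ places an even factor $p^t - 1$ next to $(p^t+1)/d$. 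Combined with $|G(\chi_0)|^2 = p^{2t}$, this forces $G(\chi_0) = \pm p^t$.

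The main obstacle, and the genuinely nontrivial content of Stickelberger's theorem, is extracting the correct sign. My intended route is to reduce $G(\chi_0)$ modulo a prime above $p$ in the cyclotomic ring $\Z[\zeta_d, \zeta_p]$ and appeal to the Gross--Koblitz formula, which expresses $G(\chi_0)$ as a $p$-adic Gamma value at $(p^t+1)/d$; in the semi-primitive regime these $p$-adic Gamma values collapse to $(-1)^{(p^t+1)/d}$. An alternative is to decompose $G(\chi_0)^2$ via a chain of Jacobi sums and track signs inductively along divisors of $d$. Either way the finite-field-only toolkit used in Appendix~\ref{sect:gauss-sums} is insufficient, and some genuine cyclotomic-congruence input is required; this is why the paper (following \cite{BEW98}) treats Stickelberger's theorem as a classical black box rather than reproducing the proof.
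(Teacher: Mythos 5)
The paper does not prove this statement at all: it quotes Stickelberger's theorem verbatim from \cite{BEW98}*{Theorem 11.6.3} and explicitly defers to the classical literature, so there is no in-paper argument to compare yours against. Judged on its own terms, your reduction is correct and standard. The claim $v=2ts$ is right, though you should spell out why minimality of $t$ forces $\operatorname{ord}_d(p)=2t$ exactly (every proper divisor of $2t$ is at most $t$, and $p^m\equiv 1$ with $m\le t$ combined with $p^t\equiv -1$ either violates minimality of $t$ or forces $d\mid 2$, contradicting $d>2$). The norm-lifting bijection on order-$d$ characters and the Hasse--Davenport step $G(\chi)=(-1)^{s-1}G(\chi_0)^s$ correctly reduce everything to $s=1$, and your purity argument — $G(\chi_0)=G(\overline{\chi_0})$ from Frobenius invariance of the trace together with $\chi_0^{p^t}=\overline{\chi_0}$, plus $\chi_0(-1)=(-1)^{(p^t-1)\cdot(p^t+1)/d}=1$ — correctly pins $G(\chi_0)\in\{\pm p^t\}$. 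The one genuine gap is exactly the one you flag yourself: the sign. Invoking Gross--Koblitz (or a Jacobi-sum induction) is a citation of a result at least as deep as Stickelberger's theorem, not a proof of it; the standard self-contained route is Stickelberger's congruence for the Gauss sum modulo a prime of $\Z[\zeta_d,\zeta_p]$ above $p$, which yields $G(\chi_0)\equiv -p^t/\bigl(\frac{p^t+1}{d}\bigr)!\cdots$ and hence the parity $(-1)^{(p^t+1)/d}$ after a factorial computation. Since the paper itself treats the entire theorem as a black box, your treatment — which black-boxes only the final sign and proves the rest — is, if anything, more explicit than the source you are being compared to; just be aware that the sign is where all the arithmetic content lives.
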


According to \cite{BWX}*{Proposition 1}, the following corollary is a consequence of Stickelberger's theorem and Davenport-Hasse theorem: an equivalent statement in terms of the generating function of Gauss periods first appeared (implicitly) in \cite{BM72}*{Section 5}. For the sake of completeness, we present a self-contained proof below.

\begin{cor}\label{Gausssum}
Let $q=p^{2rt}$, where $p^{t} \equiv -1 \pmod{2d}$ is a prime with $t$ minimal. Let $\chi$ be a multiplicative character of $\F_q$ with order $2d$. Then for each $1 \leq k \leq 2d-1$, $G(\chi^k)=-\sqrt{q}$.
\end{cor}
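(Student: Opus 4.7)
The plan is to split the index range $1 \leq k \leq 2d-1$ into two pieces according to the order of $\chi^k$: the special value $k = d$, where $\chi^k$ is the quadratic character and Corollary~\ref{quadevenr} applies, and the remaining values, where $\chi^k$ has order greater than $2$ and Stickelberger's theorem (Theorem~\ref{forr}) applies. Throughout I treat the ambient $r$ as even, since the desired identity $G(\chi^k) = -\sqrt{q}$ can fail otherwise (for instance at $q = 9$, $d = 1$, $r = t = 1$, $p = 3$, one computes $G(\chi) = +3$ from Theorem~\ref{quad}), so this parity hypothesis from the surrounding setting is essential.

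For $k = d$, the character $\chi^d$ has order $2d/\gcd(2d,d) = 2$ and hence equals the quadratic character of $\F_q$; Corollary~\ref{quadevenr} immediately yields $G(\chi^d) = -\sqrt{q}$. For $k$ in the remaining range, set $d_k = 2d/\gcd(2d,k)$; a short check shows that $\gcd(2d,k) = d$ forces $k = d$ in $1 \leq k \leq 2d-1$, so here $d_k > 2$. Since $d_k \mid 2d$ and $p^t \equiv -1 \pmod{2d}$, we have $p^t \equiv -1 \pmod{d_k}$. Let $t_k$ be the smallest positive integer satisfying $p^{t_k} \equiv -1 \pmod{d_k}$. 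Then the multiplicative order of $p$ modulo $d_k$ is $2 t_k$, and combining $p^{2t_k} \equiv 1 \pmod{d_k}$ with $p^t \equiv -1 \pmod{d_k}$ yields $t_k \mid t$ together with $t/t_k$ odd.

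Now invoke Stickelberger's theorem for the character $\chi^k$ of order $d_k > 2$ on $\F_q = \F_{p^v}$ with $v = 2rt$. Writing $v = 2 t_k s_k$ where $s_k = rt/t_k$ is a positive integer, Theorem~\ref{forr} gives
$$
G(\chi^k) = \sqrt{q}\,(-1)^{s_k - 1 + (p^{t_k}+1) s_k/d_k}.
$$
Since $r$ is even and $t/t_k$ is odd, $s_k = r\,(t/t_k)$ is even, so $s_k - 1$ is odd. Meanwhile $(p^{t_k}+1)/d_k$ is a positive integer (as $d_k \mid p^{t_k}+1$), and its product with the even integer $s_k$ is again even. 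The exponent is therefore odd, so $G(\chi^k) = -\sqrt{q}$.

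The main obstacle is less raw difficulty than bookkeeping around Stickelberger: producing a valid $t_k$, verifying the divisibility $t_k \mid t$ with $t/t_k$ odd so that $v = 2 t_k s_k$ is the correct presentation for the theorem, and then pinning down the parity of the full exponent $s_k - 1 + (p^{t_k}+1) s_k/d_k$. The parity of $r$ is precisely the extra input that makes this exponent odd uniformly, packaging the quadratic case and the higher-order cases into the single clean identity $G(\chi^k) = -\sqrt{q}$.
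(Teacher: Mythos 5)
Your proof is correct, and it follows the same skeleton as the paper's: isolate $k=d$ as the quadratic case handled by Corollary~\ref{quadevenr}, and for $k\neq d$ reduce to a character of order $e=2d/\gcd(2d,k)>2$, introduce the minimal $t'$ with $p^{t'}\equiv-1\pmod e$, show $t/t'$ is odd, and feed $v=2t's'$ into Stickelberger. The one genuine difference is in the final parity computation, and yours is cleaner: the paper determines the parity of $(p^{t'}+1)/e$ via a four-way case analysis establishing $p^{t}\equiv p^{t'}\pmod{2e}$, whereas you observe that since $s_k=r\,(t/t_k)$ is even (using $r$ even and $t/t_k$ odd), the term $(p^{t_k}+1)s_k/d_k$ is an integer multiple of an even number and hence even regardless of the parity of $(p^{t_k}+1)/d_k$ --- so the entire mod-$2e$ case analysis is unnecessary. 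You are also right to flag that the hypothesis ``$r$ even'' is needed (the corollary's statement omits it, but it is forced both by the appeal to Corollary~\ref{quadevenr} and by the final step $(-1)^{r-1}=-1$ in the paper's own proof, and your $q=9$ example shows the identity genuinely fails otherwise); this matches the standing convention in the sections where the corollary is applied.
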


\begin{proof}
When $k=d$, then $\chi^{d}$ is the quadratic character and  $G(\chi^d)=-\sqrt{q}$ by Corollary~\ref{quadevenr}.
 
Next, we assume that $d \nmid k$. Then $\chi^k$ is a character with order $e=2d/\gcd(2d,k)>2$, where $e \mid 2d$. Since $-1$ is a power of $p \pmod {2d}$, $-1$ is also a power of $p \pmod {e}$. Let $t'$ be the smallest positive integer such that $p^{t'} \equiv -1 \pmod e$; then it is easy to check that the order of $p \pmod e$ is $2t'$. Note that $p^t \equiv -1 \pmod {2d}$; in particular, $p^t \equiv -1 \pmod {e}$ and $p^{t-t'} \equiv 1 \pmod {e}$. Thus, $2t' \mid (t-t')$ so that $t/t'$ is an odd integer. Let $r'$ be defined by the equation $rt=r't'$ so that $q=p^{2rt}=p^{2r't'}$. It is clear that $r$ and $r'$ have the same parity. 

We claim that $p^{t}\equiv p^{t'}\pmod{2e}$. We consider two cases. 
\begin{enumerate}[(i)]
\item Suppose that $p^{t'} \equiv -1 \pmod {2e}$. Since $t/t'$ is odd, we also have $p^{t} \equiv -1 \pmod{2e}$. 
\item Suppose that $p^{t'}\equiv e-1 \pmod{2e}$. 
\begin{enumerate} 
\item When $e$ is even, observe that $(e-1)^2\equiv 1\pmod{2e}$, and so $(e-1)^{t/t'} \equiv e-1 \pmod{2e}$.
\item  When $e$ is odd, $(e-1)^2\equiv e^2+1\equiv e+1\pmod{2e}$; now, $(e+1)^2\equiv e^2+1\equiv e+1 \pmod{2e}$ and so $(e-1)^{t/t'} \equiv (e+1)(e-1)\equiv e^2-1\equiv e-1 \pmod{2e}$. 
\end{enumerate} In both subcases, $(e-1)^{t/t'}\equiv e-1\pmod{2e}$. As a result, $p^{t}\equiv e-1\pmod{2e}$.
\end{enumerate} 
This proves the claim. Consequently, 

\begin{enumerate}[(i)]
\item If $p^t \equiv -1 \pmod {4d}$, then $p^{t'} \equiv p^t \equiv -1 \pmod {2e}$.
\item If $p^t \equiv 2d-1 \pmod {4d}$, then we consider two subcases.
\begin{enumerate} 
\item if $e \mid d$ (i.e., $k$ is even), then $p^{t'} \equiv p^t \equiv -1 \pmod {2e}$.
\item if $e \nmid d$ (i.e., $k$ is odd), then $p^{t'}\equiv p^t \equiv e-1 \pmod {2e}$.
\end{enumerate}
\end{enumerate} 
Finally, we apply Theorem~\ref{forr} to obtain:
\begin{align*}
     G(\chi^k)/\sqrt{q}= (-1)^{r'-1+(p^{t'}+1)r'/e} =
    (-1)^{r'-1}=(-1)^{r-1}=-1.
\end{align*}
as desired. 
\end{proof}

The following theorem, which can be proved directly using orthogonality relations, expresses a complete character sum with a monomial argument as a linear combination of Gauss sums. 

\begin{thm}[\cite{LN}*{Theorem 5.30}] \label{Weilsum}
Let $\psi$ be a nontrivial additive character of $\F_q$, $n \in \N$, and $\chi$ a multiplicative character of $\F_q$ of order $e=\gcd(n,q-1)$. Then for any $a \in \F_q^*$,
$$
    \sum_{c \in \F_q} \psi(ac^n)=\sum_{j=1}^{e-1} \bar{\chi}^j(a) G(\chi^j, \psi).
$$
\end{thm}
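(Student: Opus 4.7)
The plan is to reduce the sum over $\F_q$ to a sum over the subgroup of $n$-th powers in $\F_q^*$, and then expand the indicator function of that subgroup as a sum of multiplicative characters. Concretely, I would first peel off the $c = 0$ term, writing
\[
\sum_{c \in \F_q} \psi(ac^n) \;=\; 1 + \sum_{c \in \F_q^*} \psi(ac^n),
\]
and then observe that the map $c \mapsto c^n$ on $\F_q^*$ is an $e$-to-$1$ map onto the unique subgroup $H \leq \F_q^*$ of index $e$, since $e = \gcd(n,q-1)$ and so the image of the $n$-th power map coincides with the image of the $e$-th power map. Hence
\[
\sum_{c \in \F_q^*} \psi(ac^n) \;=\; e \sum_{h \in H} \psi(ah).
\]

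Next I would express the indicator function of $H$ using the orthogonality of characters: if $\chi$ has order $e$, then $\chi,\chi^2,\ldots,\chi^{e-1},\chi^e = 1$ are precisely the characters of $\F_q^*$ that are trivial on $H$, so
\[
\mathbf{1}_H(x) \;=\; \frac{1}{e}\sum_{j=0}^{e-1} \chi^j(x) \qquad (x \in \F_q^*).
\]
Substituting this in and exchanging the order of summation gives
\[
e\sum_{h \in H}\psi(ah) \;=\; \sum_{j=0}^{e-1}\sum_{x \in \F_q^*}\chi^j(x)\psi(ax).
\]
For $j = 0$, the inner sum is $\sum_{x \in \F_q^*}\psi(ax) = -1$ because $\psi$ is a nontrivial additive character and $a \neq 0$. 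For $1 \leq j \leq e-1$, the character $\chi^j$ is nontrivial, so $\chi^j(0) = 0$ and I may extend the sum to all of $\F_q$; then the substitution $y = ax$ yields
\[
\sum_{x \in \F_q^*}\chi^j(x)\psi(ax) \;=\; \chi^j(a^{-1})\sum_{y \in \F_q}\chi^j(y)\psi(y) \;=\; \bar\chi^j(a)\, G(\chi^j,\psi).
\]

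Assembling the pieces, the $-1$ from the $j = 0$ contribution cancels the leading $+1$ from the $c = 0$ term, leaving
\[
\sum_{c \in \F_q}\psi(ac^n) \;=\; \sum_{j=1}^{e-1}\bar\chi^j(a)\, G(\chi^j,\psi),
\]
which is the claimed identity. The only points that require care are the bookkeeping of the two stray terms (the $+1$ from $c = 0$ and the $-1$ from $j = 0$), and the verification that the image of $c \mapsto c^n$ on $\F_q^*$ really is the subgroup of index $e$; the rest is orthogonality and a linear change of variables. There is no genuine obstacle here, as the argument is a direct application of character orthogonality.
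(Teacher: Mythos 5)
Your proof is correct. The paper does not prove this statement itself — it cites it as \cite{LN}*{Theorem 5.30} and notes only that it "can be proved directly using orthogonality relations" — and your argument is exactly that standard orthogonality proof (the $n$-th power map is $e$-to-$1$ onto the index-$e$ subgroup, expand its indicator via the characters $\chi^j$, and note the $+1$ from $c=0$ cancels the $-1$ from the trivial-character term), so it matches the intended approach.
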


Theorem~\ref{Weilsum} allows us to explicitly compute the following {\em Gauss periods}, that is, the character values of cyclotomic classes.

\begin{cor}\label{subsum}
Let $q=p^{2rt}$, where $p^{t} \equiv -1 \pmod{2d}$ is a prime with $t$ minimal and $r$ is even. Let $C_0, C_1, \ldots, C_{2d-1}$ be the list of $2d$-th cyclotomic classes of $\F_q$. Then
$$
\sum_{c \in C_0} e_p\big(\Tr_{\F_q}( c)\big)=-\frac{(2d-1)\sqrt{q}+1}{2d}, \quad \text{and} \quad \sum_{c \in C_k} e_p\big(\Tr_{\F_q}(c)\big)=\frac{\sqrt{q}-1}{2d}
$$
for each $1\leq k\leq 2d-1$. 

\end{cor}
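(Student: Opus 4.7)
The plan is to deduce the two identities from Theorem~\ref{Weilsum} applied with $n=2d$ (so $e = \gcd(2d, q-1) = 2d$, since $2d \mid q-1$), together with the explicit evaluation of all Gauss sums $G(\chi^j)$ supplied by Corollary~\ref{Gausssum}. The key observation is that as $c$ ranges over $\F_q^*$, the quantity $c^{2d}$ runs over each element of $C_0=\langle g^{2d}\rangle$ exactly $2d$ times, so for any $a\in\F_q^*$,
\[
\sum_{c \in \F_q} e_p\big(\Tr_{\F_q}(ac^{2d})\big) = 1 + 2d \sum_{x \in C_0} e_p\big(\Tr_{\F_q}(ax)\big) = 1 + 2d \sum_{y \in aC_0} e_p\big(\Tr_{\F_q}(y)\big).
\]
If $a\in C_k$, then $aC_0 = C_k$, so the right-hand side equals $1 + 2d\sum_{y\in C_k} e_p(\Tr_{\F_q}(y))$, which is exactly the expression we want to evaluate.

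Next, I would pick a multiplicative character $\chi$ of order $2d$ with $\chi(g)=\zeta$, a primitive $2d$-th root of unity. For $a\in C_k$, write $a = g^k u$ with $u\in C_0$; since $\chi(u)=1$, we get $\bar\chi^j(a) = \zeta^{-jk}$. Theorem~\ref{Weilsum} then yields
\[
1 + 2d \sum_{y\in C_k} e_p\big(\Tr_{\F_q}(y)\big) = \sum_{j=1}^{2d-1} \bar\chi^j(a) G(\chi^j) = \sum_{j=1}^{2d-1} \zeta^{-jk} G(\chi^j).
\]
By Corollary~\ref{Gausssum}, each Gauss sum satisfies $G(\chi^j) = -\sqrt q$ for $1 \leq j \leq 2d-1$, so the right-hand side collapses to $-\sqrt{q} \sum_{j=1}^{2d-1} \zeta^{-jk}$.

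Finally, I would evaluate the remaining geometric sum of roots of unity. For $k\not\equiv 0\pmod{2d}$, the full sum $\sum_{j=0}^{2d-1}\zeta^{-jk}$ vanishes, so $\sum_{j=1}^{2d-1}\zeta^{-jk} = -1$; this gives
\[
1 + 2d \sum_{y\in C_k} e_p\big(\Tr_{\F_q}(y)\big) = \sqrt{q}, \qquad \text{hence} \qquad \sum_{y\in C_k} e_p\big(\Tr_{\F_q}(y)\big) = \frac{\sqrt{q}-1}{2d}.
\]
For $k=0$, the sum equals $2d-1$, yielding
\[
1 + 2d \sum_{y\in C_0} e_p\big(\Tr_{\F_q}(y)\big) = -(2d-1)\sqrt{q},
\]
which rearranges to the stated formula. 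There is no substantive obstacle: the only point requiring care is verifying that Corollary~\ref{Gausssum}'s hypotheses apply uniformly to every nontrivial power $\chi^j$ (covered there by the case split between $j=d$ and $j\neq d$), and confirming that the multiplicity count $c\mapsto c^{2d}$ is correct, which uses $2d\mid q-1$.
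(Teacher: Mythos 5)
Your proof is correct and follows essentially the same route as the paper: apply Theorem~\ref{Weilsum} with $n=2d$, evaluate every nontrivial Gauss sum as $-\sqrt{q}$ via Corollary~\ref{Gausssum}, use the geometric sum of $2d$-th roots of unity (orthogonality), and convert the complete sum over $\F_q$ into a sum over the coset $aC_0$ using the $2d$-to-$1$ power map. Your write-up is in fact slightly more careful than the paper's at the step relating $\sum_{c\in\F_q}e_p(\Tr_{\F_q}(ac^{2d}))$ to $\sum_{y\in C_k}e_p(\Tr_{\F_q}(y))$, but the argument is the same.
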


\begin{proof}
Let $\psi$ denote the canonical additive character of $\F_{q}$ and let $\chi$ be a multiplicative character such that $\chi(g)=\theta$, where $\theta$ is a primitive $2d$-th root of unity. Then $\chi$ has order $2d$, and Corollary \ref{Gausssum} implies that $G(\chi^j)=-\sqrt{q}$ for each $1 \leq j \leq 2d-1$. 

Using Theorem~\ref{Weilsum} with $n=e=2d$, for any $a \in \F_q^*$, we have
\begin{align}\label{eq:weil-sum-monomial}
\sum_{c\in\F_q} e_p(\Tr_{\F_q}(ac^{2d}))=\sum_{j=1}^{2d-1} \bar{\chi}^j(a) G(\chi^j)=-\sqrt{q} \sum_{j=1}^{2d-1} \bar{\chi}^j(a)=\sqrt{q}-\sqrt{q} \sum_{j=0}^{2d-1} \bar{\chi}^j(a).
\end{align}
By the orthogonality relations, 
\begin{align}\label{eq:orthog-rel}
\sum_{j=0}^{2d-1}\bar{\chi}^j(a) = \begin{cases}
1 &\text{if } a=1 \\
0 &\text{if } a\neq 1
\end{cases}
\end{align}
Combining  \eqref{eq:weil-sum-monomial} and \eqref{eq:orthog-rel}, we obtain
$$
\sum_{c\in\F_q} e_p(\Tr_{\F_q}(c^{2d}))=-(2d-1)\sqrt{q}, \quad \text{and} \quad  \sum_{c\in\F_q} e_p(\Tr_{\F_q}(g^kc^{2d}))=\sqrt{q}
$$
for each $1 \leq k \leq 2d-1$. The desired conclusion follows immediately from $e_p(\Tr_{\F_q}(0))=1$.
\end{proof}

The following lemma allows us to compute character values using the associated Gauss sum.

\begin{lem} [\cite{LN}*{Theorem 5.12}] \label{qf}
Let $\chi$ be a multiplicative character of  $\F_q$. Then for any $a \in \F_q$, 
$$
\overline{\chi(a)}=\frac{1}{G(\chi)} \sum_{ c \in \F_q} \chi(c) e_p\big(\Tr_{\F_q}(ac)\big). 
$$
\end{lem}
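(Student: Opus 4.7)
The plan is a direct substitution argument, relying on three standard facts: (i) multiplicative characters satisfy $\chi(xy) = \chi(x)\chi(y)$ for $x,y \in \F_q^*$, (ii) $|\chi(a)| = 1$ for $a \in \F_q^*$, so $\chi(a)^{-1} = \overline{\chi(a)}$, and (iii) a nontrivial multiplicative character sums to zero on $\F_q$. I will assume $\chi$ is nontrivial (which is the case of interest — the trivial character would force $G(\chi) = -1$ and the identity to fail at $a = 0$).

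First I would handle the degenerate case $a = 0$. The left-hand side is $\overline{\chi(0)} = 0$ by the extension convention $\chi(0) = 0$, and the right-hand side is
\[
\sum_{c \in \F_q} \chi(c) \, e_p(\Tr_{\F_q}(0)) = \sum_{c \in \F_q} \chi(c) = 0
\]
by the orthogonality of characters, since $\chi$ is nontrivial. So both sides vanish.

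Next, for $a \in \F_q^*$, I would perform the change of variables $c \mapsto a^{-1} c$ in the sum defining $G(\chi)$:
\[
G(\chi) = \sum_{c \in \F_q} \chi(c) \, e_p(\Tr_{\F_q}(c)) = \sum_{c \in \F_q} \chi(a^{-1} c) \, e_p(\Tr_{\F_q}(a^{-1} c \cdot a)) \cdot \text{(after relabeling)}.
\]
More cleanly: start from the right-hand side and substitute $c = a^{-1} c'$, which is a bijection on $\F_q$. Since $\chi(a^{-1} c') = \chi(a^{-1}) \chi(c')$ (with the usual convention $\chi(0) = 0$ so the identity holds term-by-term), I obtain
\[
\sum_{c \in \F_q} \chi(c) \, e_p(\Tr_{\F_q}(ac)) = \chi(a^{-1}) \sum_{c' \in \F_q} \chi(c') \, e_p(\Tr_{\F_q}(c')) = \chi(a)^{-1} G(\chi).
\]
Finally, because $|\chi(a)| = 1$ for $a \in \F_q^*$, we have $\chi(a)^{-1} = \overline{\chi(a)}$, yielding
\[
\sum_{c \in \F_q} \chi(c) \, e_p(\Tr_{\F_q}(ac)) = \overline{\chi(a)} \, G(\chi),
\]
and dividing by $G(\chi)$ gives the claim.

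There is no real obstacle here: the identity is essentially a linear change of variable in the Gauss sum. The only care needed is to treat $a = 0$ separately and to verify that the change of variables is compatible with the $\chi(0) = 0$ convention, which it is because the substitution $c = a^{-1} c'$ sends $0$ to $0$.
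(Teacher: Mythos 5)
Your proof is correct and is the standard change-of-variables argument for this identity; the paper itself gives no proof, simply citing Lidl--Niederreiter, and your argument is exactly the textbook one. Your observation that the trivial character must be excluded (the identity fails at $a=0$ since $G(\chi)=-1$ while $\sum_{c}\chi(c)=q-1$) is a legitimate caveat about the statement as written, and is harmless here because the lemma is only invoked in Lemma~\ref{complete} with $\chi$ nontrivial.
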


The lemma below, which is an application of Lemma~\ref{qf} to double character sums, appeared implicitly in \cite{YipG}*{Proposition 4.2}.

\begin{lem}\label{complete}
Let $q$ be an odd prime power. Let $A$ be a subset of $\F_q$, and let $\chi$ be a nontrivial multiplicative character of $\F_q$. Then
$$
\sum_{a,b\in A}\overline{\chi(a-b)}=\frac{1}{G(\chi)} \sum_{c \in \F_q^*} \chi(c) |S(q,A;c)|^2.
$$
\end{lem}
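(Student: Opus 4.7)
The plan is to expand $|S(q,A;c)|^2$ as a double sum over $A$, interchange the order of summation, and then invoke Lemma~\ref{qf} to collapse the inner character sum.

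First I would write
\[
|S(q,A;c)|^2 = S(q,A;c)\overline{S(q,A;c)} = \sum_{a,b\in A} e_p\!\bigl(\Tr_{\F_q}(ac)\bigr)\overline{e_p\!\bigl(\Tr_{\F_q}(bc)\bigr)} = \sum_{a,b\in A} e_p\!\bigl(\Tr_{\F_q}((a-b)c)\bigr),
\]
using that $\overline{e_p(x)} = e_p(-x)$ and the $\F_p$-linearity of the trace. Multiplying by $\chi(c)$ and summing over $c \in \F_q^*$, Fubini yields
\[
\sum_{c\in\F_q^*} \chi(c)\,|S(q,A;c)|^2 = \sum_{a,b\in A}\;\sum_{c\in\F_q^*} \chi(c)\, e_p\!\bigl(\Tr_{\F_q}((a-b)c)\bigr).
\]

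Next I would apply Lemma~\ref{qf} with the parameter $a-b$ in place of $a$. Since $\chi(0)=0$, the sum over $\F_q$ in that lemma coincides with the sum over $\F_q^*$, giving
\[
\sum_{c\in\F_q^*} \chi(c)\, e_p\!\bigl(\Tr_{\F_q}((a-b)c)\bigr) = G(\chi)\,\overline{\chi(a-b)}
\]
for every pair $(a,b)$ (note the diagonal pairs $a=b$ contribute $0$ on both sides because $\chi(0)=0$, so there is no exceptional case to treat). Substituting this identity back in and dividing through by $G(\chi)$, which is nonzero since $\chi$ is nontrivial, produces
\[
\frac{1}{G(\chi)}\sum_{c\in\F_q^*} \chi(c)\,|S(q,A;c)|^2 = \sum_{a,b\in A} \overline{\chi(a-b)},
\]
which is exactly the desired identity.

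There is no real obstacle here: the proof is a short manipulation, and the only subtle point is making sure that extending Lemma~\ref{qf}'s summation from $\F_q$ to $\F_q^*$ (and handling the diagonal $a=b$) is legitimate, both of which follow immediately from $\chi(0)=0$.
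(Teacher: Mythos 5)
Your proof is correct and is essentially the paper's own argument run in reverse: the paper starts from $\sum_{a,b}\overline{\chi(a-b)}$, applies Lemma~\ref{qf}, and swaps the order of summation to recognize $|S(q,A;c)|^2$, whereas you expand $|S(q,A;c)|^2$ first and then apply Lemma~\ref{qf}; the ingredients (Lemma~\ref{qf}, linearity of the trace, Fubini, $\chi(0)=0$) are identical. The only microscopic imprecision is that the vanishing of the diagonal terms on the character-sum side follows from orthogonality of the nontrivial $\chi$ (which is exactly what Lemma~\ref{qf} encodes at $a=0$) rather than from $\chi(0)=0$, but this does not affect the argument.
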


\begin{proof}
Recall that the trace map is linear. By Lemma \ref{qf}, 
\begin{align*}
\sum_{a,b\in A}\overline{\chi(a-b)}
&=\sum_{a,b\in A} \frac{1}{G(\chi)} \sum_{c \in \F_q^*} \chi(c)  e_p\big(\Tr_{\F_q}((a-b)c)\big)\\
&=\frac{1}{G(\chi)} \sum_{c \in \F_q^*} \chi(c) \sum_{a,b\in A} e_p\big(\Tr_{\F_q}((a-b)c)\big)\\
&=\frac{1}{G(\chi)} \sum_{c \in \F_q^*} \chi(c) \bigg(\sum_{a\in A} e_p\big(\Tr_{\F_q}(ac)\big)\bigg) \bigg(\sum_{b\in A} e_p\big(\Tr_{\F_q}(-bc)\big)\bigg)    \\
&=\frac{1}{G(\chi)} \sum_{c \in \F_q^*} \chi(c) \bigg(\sum_{a\in A} e_p\big(\Tr_{\F_q}(ac)\big)\bigg) \bigg(\sum_{b\in A} e_p\big({-}\Tr_{\F_q}(bc)\big)\bigg)    \\
&=\frac{1}{G(\chi)} \sum_{c \in \F_q^*} \chi(c) |S(q,A; c)|^2. \qedhere
\end{align*}
\end{proof}

Recall that $S(q,A;c)$, defined in equation~\eqref{Sqac}, is a scaled Fourier coefficient of the indicator function on $A$. Thus, Plancherel's identity implies the following lemma.

\begin{lem}[\cite{YipG}*{Lemma 4.1}]\label{sumc}
For any $A \subset \F_q$, we have
$$
\sum_{c \in \F_q^*} \big|S(q,A;c)\big|^2=q|A|-|A|^2.
$$
\end{lem}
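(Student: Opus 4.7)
The plan is the standard Plancherel identity on the finite abelian group $\F_q^+$: expand the square, swap sums, and apply orthogonality of additive characters. Using $\overline{e_p(x)}=e_p(-x)$ and linearity of the trace,
$$|S(q,A;c)|^2 = S(q,A;c)\overline{S(q,A;c)} = \sum_{a,b\in A} e_p\bigl(\Tr_{\F_q}((a-b)c)\bigr).$$

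Next, I would sum over $c\in\F_q^*$, interchange the order of summation, and extend the inner sum to all of $\F_q$, correcting for the $c=0$ contribution afterwards. By the nondegeneracy of the trace pairing on $\F_q$, for any fixed $x\in\F_q$ the map $c\mapsto\Tr_{\F_q}(xc)$ is an $\F_p$-linear surjection onto $\F_p$ when $x\neq 0$, so a nontrivial additive character of $\F_p^+$ is being summed and the sum vanishes. Hence
$$\sum_{c\in\F_q} e_p\bigl(\Tr_{\F_q}(xc)\bigr) = q\cdot\mathbbm{1}_{x=0}.$$
Applying this with $x=a-b$ shows that the sum over all $c\in\F_q$ equals $q\cdot\#\{(a,b)\in A\times A:a=b\}=q|A|$. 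The $c=0$ contribution is $\sum_{a,b\in A}1=|A|^2$, and subtracting yields the desired identity $q|A|-|A|^2$.

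There is no substantive obstacle; this is a textbook Parseval identity in the finite-field setting. The only item requiring genuine care is the orthogonality relation itself, which rests on the nondegeneracy of the trace form $\Tr_{\F_q}\colon\F_q\to\F_p$.
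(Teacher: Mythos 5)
Your proof is correct and is exactly the argument the paper has in mind: the paper simply cites this as an instance of Plancherel's identity for the Fourier coefficients of the indicator function of $A$, and your expansion of $|S(q,A;c)|^2$ followed by orthogonality of the additive characters $c\mapsto e_p(\Tr_{\F_q}(xc))$ (via nondegeneracy of the trace) is the standard way to carry that out. Nothing is missing.
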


\section{Proof of Proposition~\ref{proposition:main-result}}\label{sect:proof-of-main-prop}
Let $A$ be a maximum clique in $X$. We aim to show the following two statements:
\begin{enumerate}
    \item $|A|\leq\sqrt{q}$; \label{item:trivial-upper-bound}
    \item $|A|=\sqrt{q}$ if and only if $S(q,A;c)=0$ for all $c \in D'$. \label{item:characterization}
\end{enumerate} 
We have mentioned earlier that \eqref{item:trivial-upper-bound} can be proved independently using the Delsarte bound. The proof given here is genuinely different, and we include it as \eqref{item:trivial-upper-bound} and \eqref{item:characterization} naturally fit together. In the following, we use $S(c)$ to denote $S(q, A; c)$ for each $c\in\F_q^{\ast}$, and use $T_k=\sum_{c\in C_k} |S(c)|^2$ as a shorthand for each $0\leq k\leq 2d-1$.

Let $\theta$ be a primitive $2d$-th root of unity. Let $\chi$ be the multiplicative character of $\F_q$ such that $\chi(g)=\theta$. In order to study the structure of $A$, we associate to the graph $X$ the following interpolation function $f\colon\F_q \to \R$ defined by
$$
f(x)=\sum_{j=0}^{2d-1} c_j \overline{\chi^j(x)} \quad \quad \text{where } c_j=\sum_{\ell=1}^{d} \theta^{jm_\ell}.
$$
We claim that $f/2d$ is the indicator function on the connection set $D$ of the Cayley graph $X$:
\begin{equation}\label{sumpp}
f(x)=
\begin{cases}
2d, & \text{if } x \in D,\\
0,  & \text{if } x \notin D,
\end{cases}    
\end{equation}
where $D$ is defined in equation~\eqref{D}. To prove this claim, recall the orthogonality relations for the group of order $|G|=2d$ generated by the character $\chi$:
$$
\frac{1}{2d}\sum_{j=0}^{2d-1} \chi^j(g^k) \overline{\chi^j(g^{\ell})} = 
\begin{cases} 
1 &\text{if } k= \ell, \\
0 &\text{if } k\neq \ell.
\end{cases}
$$
Next, 
\begin{align*} 
f(x)=\sum_{j=0}^{2d-1} \bigg(\sum_{\ell=1}^{d} \theta^{jm_\ell}\bigg) \overline{\chi^j(x)}=\sum_{j=0}^{2d-1} \bigg(\sum_{\ell=1}^{d} \chi^j(g^{m_{\ell}})\bigg) \overline{\chi^j(x)}=\sum_{\ell=1}^{d} \bigg( \sum_{j=0}^{2d-1}  \chi^j(g^{m_{\ell}}) \overline{\chi^j(x)} \bigg).
\end{align*}
Given $x\in D$, we have $x=g^{2dk+m_{\ell}}$ for some $k\geq 0$ and some $1\leq \ell\leq d$; in this case, there will be exactly one non-zero inner sum, and so $f(x)=f(g^{m_{\ell}})=2d$. If $x=0$, then $f(x)=0$. Given $x\notin D$ such that $x \neq 0$, we have $x=g^{2dk+m}$ for some $k\geq 0$ and $m\neq m_{\ell}$ for any $\ell$; in this case, all the inner sums will vanish and we get $f(x)=f(g^{m})=0$, establishing the claim above. 

Since $A$ is a clique, for any $a,b \in A$, if $a \neq b$, we have $a-b \in D$; otherwise $f(a-b)=0$. Thus, equation \eqref{sumpp} implies that
\begin{equation}\label{clique}
 \sum_{a,b \in A} f(a-b)=2d(|A|^2-|A|).
\end{equation}

On the other hand, using Lemma~\ref{complete}, we have
\begin{align*}
\sum_{a,b \in A} f(a-b)
&=\sum_{a, b\in A} \sum_{j=0}^{2d-1} c_j \overline{\chi^{j}(a-b)} \\
&=\sum_{a, b\in A} c_0 \overline{\chi^{0}(a-b)} + \sum_{j=1}^{2d-1} \sum_{a, b\in A}  c_j \overline{\chi^{j}(a-b)}  \\
&=d(|A|^2-|A|)+ \sum_{j=1}^{2d-1} \frac{c_j}{G(\chi^j)} \bigg(\sum_{c \in \F_q^*} \chi^j(c) |S(c)|^2 \bigg) \\
&=d(|A|^2-|A|)+ \sum_{j=1}^{2d-1} \frac{c_j}{G(\chi^j)} \bigg(\sum_{k=0}^{2d-1} \sum_{c \in C_k}\chi^j(c) |S(c)|^2 \bigg) \\
&=d(|A|^2-|A|) + \sum_{j=1}^{2d-1} \frac{c_j}{G(\chi^j)} \bigg(\sum_{k=0}^{2d-1} \theta^{jk}T_k \bigg). 
\end{align*}
By Corollary \ref{Gausssum}, $G(\chi^j)=-\sqrt{q}$ for each $1 \leq j \leq 2d-1$. Thus,
\begin{equation}\label{sumf}
\sum_{a,b \in A} f(a-b)
=d(|A|^2-|A|)-\frac{1}{\sqrt{q}}\sum_{k=0}^{2d-1} \bigg(\sum_{j=1}^{2d-1} c_j   \theta^{jk}\bigg) T_k.    
\end{equation}
Observe that
\begin{align}
\sum_{j=1}^{2d-1} c_j   \theta^{jk}
&=\sum_{j=1}^{2d-1} \sum_{\ell=1}^{d} \theta^{j m_\ell} \theta^{jk} =\sum_{\ell=1}^{d} \sum_{j=1}^{2d-1}  \theta^{j(m_\ell+k)} \nonumber\\
&=-d+\sum_{\ell=1}^{d} \sum_{j=0}^{2d-1}  \theta^{j(m_\ell+k)} \nonumber\\
&=
\begin{cases}
d, & \text{if } k \equiv -m_1,-m_2, \ldots, -m_d \pmod {2d},\\
-d,  & \text{otherwise} .
\end{cases}       \label{sumd}
\end{align}
For the last step, we used the following elementary fact based on the geometric series:
$$
\sum_{j=0}^{2d-1}\theta^{j \ell} = \begin{cases} 
0 &\text{if } \ell \not\equiv 0 \pmod{2d}, \\ 
2d &\text{if } \ell \equiv 0 \pmod{2d}.
\end{cases} 
$$
Combining equations~\eqref{sumf} and~\eqref{sumd}, we obtain that 
\begin{align*}
\sum_{a,b \in A} f(a-b)
&=d(|A|^2-|A|)+\frac{d}{\sqrt{q}}\sum_{k \neq -m_\ell} T_k -\frac{d}{\sqrt{q}} \sum_{\ell=1}^{d} T_{-m_\ell}\\
&\leq d(|A|^2-|A|)+\frac{d}{\sqrt{q}}\sum_{k=0}^{2d-1} T_k\\
&=d(|A|^2-|A|)+\frac{d}{\sqrt{q}} (q|A|-|A|^2). 
\end{align*}
In the last step, we used Lemma~\ref{sumc}. Comparing the inequality above with equation \eqref{clique}, 
it follows that for a clique $A$, we have $|A|^2-|A| \leq \frac{1}{\sqrt{q}} (q|A|-|A|^2)$, which implies that $|A| \leq \sqrt{q}$. This completes the proof of \eqref{item:trivial-upper-bound}. Moreover, the equality $|A|=\sqrt{q}$ holds if and only if 
\begin{equation}\label{Tzero}
T_{-m_1}=T_{-m_2}=\ldots=T_{-m_d}=0.
\end{equation}
This implies that $S(q,A;c)=0$ for all $c \in D'=\cup_{\ell=1}^{d} C_{-m_\ell}$, proving \eqref{item:characterization}.

\section{Algorithms and numerical computations}\label{sect:numerical-evidence}

This section presents algorithms to find and classify maximum cliques in semi-primitive pseudo-Paley graphs. We also report results from our experiments using SageMath~\cite{sagemath}  to support the conjectures in this paper. 

Using SageMath, we can find several examples of pseudo-Paley graphs of order $q$ coming from unions of semi-primitive cyclotomic classes with clique number $\sqrt{q}$. The following table summarizes some of these examples. The set $I$ indicates the choice of $C_i$'s in the connection set $D$. For example, the second row corresponds to the Cayley graph $\operatorname{Cay}(\F_{5^4}, D)$ where $D=C_0\cup C_1\cup C_3$ and 
$C_j = \{g^{6i+j} \ | \ 0\leq i\leq \frac{5^4-1}{6}\}$ and $g$ is a primitive root of $\F_{5^4}$. 

\begin{table}[ht!]
\begin{tabular}{ |c|c|c|c| } 
 \hline
 $q$ & $d$ & $I$ & $\omega(X)$\\
 \hline
 $5^4$ & $3$ & $\{0, 1, 3\}$ & $25$ \\ 
 \hline
 $7^4$ & $4$ & $\{0, 1, 2, 4\}$  & $49$ \\ \hline 
 $7^4$ & $4$ & $\{0, 1, 3, 6\}$  & $49$ \\
 \hline
 $3^8$ & $5$ & $\{0, 1, 2, 3, 5\}$  & $81$ \\
 \hline 
 $3^8$ & $5$ & $\{0, 1, 3, 6, 7\}$  & $81$ \\
 \hline 
   $11^4$ & $6$ & $\{0, 1, 2, 3, 6, 7\}$  & $121$ \\
   \hline
   $11^4$ & $6$ & $\{0, 1, 2, 4, 5, 8\}$  & $121$ \\
 \hline 
  $11^4$ & $6$ & $\{0, 1, 2, 3, 5, 10\}$  & $121$ \\
   \hline 
   $13^4$ & $7$ & $\{0, 1, 2, 3, 5, 6,9\}$  & $169$ \\
   \hline
   $13^4$ & $7$ & $\{0, 1, 2, 4, 7, 8, 10\}$  & $169$ \\
 \hline 
  $13^4$ & $7$ & $\{0, 1, 2, 5, 7, 9, 10\}$  & $169$ \\
   \hline 
\end{tabular}
\caption{\label{tab:evidence-main-conj} Graphs $X=PP(q,2d,I)$ with $\omega(X)=\sqrt{q}$}
\end{table}

In each of the graphs listed above, there are exactly $2$ cliques of size $\sqrt{q}$ containing $\F_{p^t}$, which supports Conjecture~\ref{conj:2-max-cliques}. Both cliques are subspaces over $\F_{p^t}$ which supports Conjecture~\ref{mainconj}. 

In general, finding the clique number of a Cayley graph is NP-hard \cite{GR17}. Computing the clique number of $X=PP(q,2d,I)$ for $q>2000$ with the current computational power is impossible. In contrast, finding the common neighbors of $\F_p$ inside $X$ takes only polynomial time, and SageMath can return the clique number of a graph with $<700$ vertices in a few minutes. Therefore, we employ the following algorithm.

\begin{algorithm}[H] \label{alg:clique-number}
$k=\min \{n \in \mathbb{N}: q/2^n<700\}$\\
$V=\{0,1,\ldots, k-1\} \cup \bigcap_{j=0}^{k-1} \text{Neighbors}(j)$ \ \ \ \ \ // Finding the set of common neighbors\\
$Y=PP(q,2d,I)[V]$ \ \ \ \ \ \ \ \ \ \ \ \  \ \ \ \ \ \ \ \ \ \ \ \ \ \ \ \ \ \ \ \ \ \ \ \  \ \ \ \ \ \ // Taking the induced subgraph \\
\Return $\omega(Y)$
\caption{Find the size of a maximum clique containing $\F_p$ inside $PP(q,2d,I)$ }
\end{algorithm}

Since the graph is $\frac{q-1}{2}$-regular, we expect that the subgraph $Y$ has roughly $q/2^k$ many vertices because the graphs we consider are quasi-random \cite{CGW89}.
Since $q/2^k<700$, SageMath can quickly return the clique number of $Y$. We used this algorithm to handle the exceptional cases in Theorem~\ref{thm: Peisert_clique_number}. 

In general, it suffices to consider maximum cliques containing $\{0, 1\}$. The reasoning is given at the beginning of the proof of Theorem~\ref{thm:clique_number}. When $q\leq 7^4$, we have $q/2^2 < 700$, and we can perform the algorithm outlined above with $k=2$ to find the clique number of all possible $PP(q, 2d, I)$. This verifies the main Conjecture~\ref{mainconj} for values of $q$ up to $7^4$.

In the final part of the paper, we present an efficient algorithm to verify Conjecture~\ref{conj:subspace} for a given prime $p$. The strategy is to fix an $\F_p$-subspace $V\subset \F_{p^4}$ with basis $\{1, a\}$ with $a\in C_0$
, and search for an index set $I\subset \{0, 1, \ldots, p\}$ with $|I|=(p+1)/2$ such that
$V \setminus \{0\} \subset \bigcup_{j\in I} C_{j}$. In order for $V$ to form a clique in $PP(p^4,p+1,I)$, by Theorem~\ref{theorem:main-result}, we must have
\begin{equation} \label{eq:I}
I=\big \{0 \leq j \leq p: |V \cap C_j|=\frac{p^2-1}{(p+1)/2}=2(p-1)\big\}.
\end{equation}
Since $1 \in V$, it follows that $0 \in I$. Note that $V \setminus \{0\}$ is a disjoint union of $(p+1)$ many $\F_p^{\ast}$-cosets with representatives $R=\{1, a, a+1, a+2, \ldots, a+(p-1)\}$. By Lemma~\ref{lem:C_0-contains-Fpt}, $\F_p^{\ast}$ is a subgroup of $C_0$, and so each cyclotomic class $C_j$ is a disjoint union of $\F_p^{\ast}$-cosets.
As a result, we can decompose $(V \setminus \{0\}) \cap C_j$ into $\F_p^{\ast}$-cosets. Therefore, equation~\eqref{eq:I} is equivalent to the statement that  $C_{j}$ contains exactly two $\F_{p}^{\ast}$-coset representatives in $R$ for each $j \in I$. Applying this idea, we can design a polynomial-time algorithm as follows: 

\begin{algorithm}[H] \label{alg:josh-question}
ValidSets $\gets \emptyset$ \\
\For{$k \gets 0$ to $p^2$}
{
$a \gets g^{(p+1)k}$\\
$L \gets []$ \ \ \ \ // initiating an empty list \\ 
$R \gets \{1,a,a+1, \ldots, a+(p-1)\}$ \\
   \For {$r \in R$}
   {
      $t\gets \log_g (r)$  \ \ \ \ \ // finding exponent $t$ with $g^t=r$ using the discrete logarithm \\
      $L\gets L \text{ append } \{t \pmod {p+1} \}$\\
   }
   $I\gets \operatorname{set}(L)$ \ \ \ \ \ \ \ \ \ \ \ \ // remove duplicates from the list $L$ to get the set $I$ \\
   $\operatorname{flag} \gets 1$ \ \ \ \  // assume $I$ is a valid set  \\
   \For {$j \in I$}
   {
      \If {$L.\operatorname{count}(j)\neq 2$}  
      {
          $\operatorname{flag} \gets 0$\ \ \ \ \ // \ $I$ is not valid   \\
          break   \\
      }
   }
   \If {$\operatorname{flag} =1$} 
   { 
      ValidSets $\gets$  ValidSets $\cup \{I\}$
   }
}
\Return $\#$ValidSets \ \ \ \ \ // return cardinality
\caption{Count the number of $I$ such that $0 \in I$ and $\omega(PP(p^4,p+1,I))=p^2$.}
\end{algorithm}

Using SageMath, Conjecture~\ref{conj:subspace} has been checked to hold for all odd primes $p<100$. We also found that each index set $I$ (except for $\{0,2, \ldots, p-1\}$) appeared exactly twice, which is consistent with Conjecture~\ref{conj:2-max-cliques}.

As an example, SageMath outputs the following index set when $p=97$ and $a=g^{p+1}=g^{98}$:
\begin{align*}
    I_0=\{0, 2, 3, 5, 6, 8, 9, 11, 14, 15, 16, 21, 22, 23, 24, 25, 26, 29, 32, 33, 35, 36, 43, 44, 45, \\ 46, 50, 51, 53, 56, 59, 62, 70, 72, 75, 76, 78, 80, 82, 84, 87, 88, 90, 92, 93, 94, 95, 96, 97\}
\end{align*}
In other words, $\omega(PP(97^4,98,I_0))=97^2$ with a maximum clique given by $V=\F_{97}\oplus g^{98}\F_{97}$.

\end{document}